\tikzset{
>=stealth,
every picture/.style={thick},
graphs/every graph/.style={empty nodes},
}
\tikzstyle{vertex}=[
\tikzstyle{printersafe}=[decoration={snake,amplitude=0pt}]
\newcommand{\Pic}{\operatorname{Pic}}
\newcommand{\mult}{\operatorname{mult}}
\newcommand{\oo}{\mathcal{O}}
\def\O#1.{\mathcal {O}_{#1}}
\def\pr #1.{\mathbb P^{#1}}
\def\af #1.{\mathbb A^{#1}}
\def\ses#1.#2.#3.{0\to #1\to #2\to #3 \to 0}
\def\xrar#1.{\xrightarrow{#1}}
\def\K#1.{K_{#1}}
\def\bA#1.{\mathbf{A}_{#1}}
\def\bM#1.{\mathbf{M}_{#1}}
\def\bL#1.{\mathbf{L}_{#1}}
\def\bB#1.{\mathbf{B}_{#1}}
\def\bK#1.{\mathbf{K}_{#1}}
\def\subs#1.{_{#1}}
\def\sups#1.{^{#1}}
\newcommand{\PP}{\mathbb{P}}
\newcommand{\ZZ}{\mathbb{Z}}
\newcommand{\QQ}{\mathbb{Q}}
\newcommand{\CC}{\mathbb{C}}
\newcommand{\Aut}{\operatorname{Aut}}
\DeclareMathOperator{\lct}{lct}
\def \ge {\geqslant}
\def \le {\leqslant}
\def \geq {\geqslant}
\def \leq {\leqslant}
\newtheorem{theorem}{Theorem}[section]
\newtheorem{lemma}[theorem]{Lemma}
\newtheorem{proposition}[theorem]{Proposition}
\newtheorem{corollary}[theorem]{Corollary}
\newtheorem{conjecture}[theorem]{Conjecture}
\theoremstyle{definition}
\newtheorem{assumptions}[theorem]{Assumptions}
\newtheorem{definition}[theorem]{Definition}
\newtheorem{example}[theorem]{Example}
\newtheorem{question}[theorem]{Question}
\newtheorem{construction}[theorem]{Construction}
\newtheorem{remark}[theorem]{Remark}
\theoremstyle{remark}
\numberwithin{equation}{section}
\newcounter{rownumber}[figure]
\newcounter{rownumber-irr}[figure]
\newcounter{rownumber-p1}[figure]
\begin{document}

\title{Finiteness of projective pluricanonical representation for automorphisms of complex manifolds}

\begin{abstract}
We study the action of the group of bimeromorphic automorphisms~\mbox{$\mathrm{Bim}(X)$} of a compact complex manifold $X$ on the image of the pluricanonical map, which we call the projective pluricanonical representation of this group. 
If $X$ is a Moishezon variety, then the image of $\mathrm{Bim}(X)$ via such a representation is a finite group by a classical result due to Deligne and Ueno. 
We prove that this image is a finite group under the assumption that for the Kodaira dimension $\kappa(X)$ of $X$ we have~\mbox{$\kappa(X)=\dim X-1$}. To this aim, we prove a version of the canonical bundle formula in relative dimension $1$ which works for a proper morphism from  a complex variety to a projective variety. 
In particular, this establishes the analytic version of Prokhorov--Shokurov conjecture in relative dimension~$1$. 
Also, we observe that the analytic version of this conjecture does not hold in relative dimension~$2$. 
\end{abstract}

\author[K.~Loginov]{Konstantin Loginov}
\address{Steklov Mathematical Institute of Russian Academy of Sciences, 
8 Gubkina street, Moscow 119991, Russia.  
\newline
Laboratory of Algebraic Geometry, National Research University Higher School of Economics, Russian Federation.
\newline
Centre of Pure Mathematics, MIPT, Moscow, Russia.}
\email{loginov@mi-ras.ru}

\author[C.~Shramov]{Constantin Shramov}
\address{Steklov Mathematical Institute of Russian Academy of Sciences, 
8 Gubkina street, Moscow 119991, Russia.  
\newline 
HSE University, Russian Federation,
Laboratory of Algebraic Geometry, 6 Usacheva str., Moscow, 119048, Russia.}
\email{costya.shramov@gmail.com}

\maketitle
\setcounter{tocdepth}{1} 
\tableofcontents
\section{Introduction}
In this paper, we study groups of bimeromorphic automorphisms of complex manifolds. More precisely, we study the image of the pluricanonical representation of the group of bimeromorphic self-maps. 

In what follows, we denote by $\mathrm{Aut}(X)$ the group of automorphisms of a compact complex manifold $X$, by $\mathrm{Bim}(X)$ the group of bimeromorphic self-maps of $X$, and by~$\kappa(X)$ the Kodaira dimension of~$X$. 
Let us denote by $\rho$ the \emph{pluricanonical representation} of the group $\mathrm{Bim}(X)$ which comes from the action of $\mathrm{Bim}(X)$ on the space of holomorphic $m$-forms. 
Also, by $\overline{\rho}$ we denote the projectivization of the pluricanonical representation, that is, the homomorphism of $\mathrm{Bim}(X)$ to the autormorphism group of the projectivization of the space of holomorphic $m$-forms. 
Thus, $\overline{\rho}(\mathrm{Bim}(X))$ acts on the image of the pluricanonical map, see Section \ref{subsec-pluricanonical-rep} for details. For convenience, we call it the \emph{projective pluricanonical representation}. 
Recall the following classical

\begin{theorem}[{\cite[Theorem 14.10]{Ue75}}]
\label{thm-finite-moishezon}
Let $X$ be a compact complex manifold. Assume that~$X$ is Moishezon. Then the group $\rho(\mathrm{Bim}(X))$, and hence also 
the group~\mbox{$\overline{\rho}(\mathrm{Bim}(X))$}, is finite.
\end{theorem}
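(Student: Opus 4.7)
My plan combines the bimeromorphic invariance of pluricanonical sections with an invariant norm (giving compactness) and an integral structure (giving discreteness).

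Since $X$ is Moishezon, a projective model bimeromorphic to $X$ exists, and after resolution of singularities I may assume $X$ is smooth projective, so that $\mathrm{Bim}(X) = \Bir(X)$; bimeromorphic invariance of $H^0(-, mK_{-})$ makes this reduction equivariant. The pullback of pluricanonical forms along $\sigma \in \Bir(X)$ extends across exceptional loci by Hartogs' theorem, giving the representation $\rho \colon \Bir(X) \to \GL(V_m)$ with $V_m := H^0(X, mK_X)$.

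Next, I would construct a $\Bir(X)$-invariant positive norm on $V_m$: for $m = 1$ the Hermitian inner product $\langle \omega_1, \omega_2 \rangle = i^{n^2} \int_X \omega_1 \wedge \overline{\omega_2}$, and for $m \geq 2$ the quadratic norm $\omega \mapsto \bigl(\int_X (\omega \wedge \bar{\omega})^{1/m}\bigr)^m$. Invariance under $\sigma$ follows from change of variables on a common resolution of the graph $\Gamma_\sigma \subset X \times X$, since the exceptional locus has Lebesgue measure zero and pluricanonical sections extend holomorphically there. This forces $\rho(\Bir(X))$ into a compact subgroup of $\GL(V_m)$.

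To upgrade compactness to finiteness for $m = 1$, I would exploit that $V_1 = H^{n,0}(X)$ sits as a direct summand of $H^n(X, \CC)$ and that $\Bir(X)$ acts on $H^n(X, \ZZ)/\mathrm{tors}$ via the graph correspondence, preserving both the integral lattice and the Hodge decomposition; a compact subgroup of $\GL(H^n(X, \RR))$ stabilizing an integral lattice is finite. For $m \geq 2$ with $\kappa(X) = \dim X$, the $m$-canonical map (for $m$ sufficiently large) is birational onto its image, and the image is of general type with finite automorphism group by Matsumura's theorem, so $\rho$ factors through a finite group once the central $\CC^{\ast}$-ambiguity is killed by the invariant norm above. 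For intermediate Kodaira dimension, I would invoke the Iitaka fibration to reduce to a variety of general type of dimension $\kappa(X)$ and iterate.

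The principal obstacle will be making precise the action of birational self-maps on integral cohomology via the graph correspondence, and verifying its compatibility with the Hodge decomposition on a common resolution. A secondary difficulty is combining the analytic invariant norm with Matsumura's algebraic finiteness for $m \geq 2$ at intermediate Kodaira dimension, where the Iitaka fibration reduction must be carried out compatibly with the full $\Bir(X)$-action rather than just the action of a finite-index subgroup.
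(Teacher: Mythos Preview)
The paper does not prove this statement; it is cited from \cite{Ue75} and used as a black box. Comparing against Ueno's proof (with Deligne's contribution), your architecture is right --- a $\Bir(X)$-invariant positive functional forces $|\lambda|=1$ for every eigenvalue of $\rho_m(g)$, and an arithmetic input should then give discreteness --- but your discreteness step for $m\ge 2$ has a genuine gap. Your Iitaka/Matsumura reduction only yields finiteness of $\overline\rho(\Bir(X))$; the kernel of $\GL(V_m)\to\PGL(V_m)$ is $\CC^\times$, and the invariant functional only cuts $\rho(\Bir(X))\cap\CC^\times$ down to a subgroup of $S^1$. The sentence ``the central $\CC^\ast$-ambiguity is killed by the invariant norm above'' is exactly where the argument breaks: nothing you have written rules out an infinite subgroup of $S^1$ appearing as scalars in the image of $\rho$. (This is not the difficulty you flagged about carrying the Iitaka fibration $\Bir(X)$-equivariantly; it is a separate and more basic obstruction.)

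Ueno's proof closes this uniformly in $m$ and bypasses the Iitaka fibration entirely. Deligne's lemma (see \cite[\S14]{Ue75}) shows that every eigenvalue of $\rho_m(g)$ is an algebraic integer, and by spreading $X$ and $g$ over a finitely generated subfield of $\CC$ and rerunning the invariant-functional argument over every complex embedding, all Galois conjugates of each eigenvalue also have absolute value $1$. Kronecker's theorem then forces each eigenvalue to be a root of unity of order bounded in terms of $\dim V_m$, and combined with compactness this gives finiteness directly --- no case analysis on $\kappa(X)$, no appeal to Matsumura. Your $m=1$ route via $H^n(X,\ZZ)$ has a related weakness: the graph-correspondence assignment $g\mapsto[\Gamma_g]_*$ on $H^n(X,\ZZ)$ is not a group homomorphism (compositions pick up terms supported on exceptional loci), and the compactness you established is on $H^{n,0}$ only, not on all of $H^n(X,\mathbb{R})$, so the ``compact and lattice-preserving implies finite'' step does not apply as stated. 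One can still extract from a single correspondence that the eigenvalues of $\rho_1(g)$ are algebraic integers, but to conclude they are roots of unity you again need all conjugates on the unit circle, which is Deligne's argument rather than the lattice argument.
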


There exist analogous results for projective pairs, see \cite{FG14}, \cite{HX16}, and 
also~\cite{FX25}. 
Moreover, for arbitrary compact complex manifolds a weaker result is known. 
Namely, we say that a group has \emph{bounded finite subgroups}, if the order of any finite subgroup of such group is bounded by a constant which is independent of the subgroup.

\begin{theorem}[{\cite[Theorem~1.4]{Loginov}}]
\label{prop-pluri-bfs}
Let $X$ be a compact complex manifold. Then both the image of the pluricanonical representation $\rho(\mathrm{Bim}(X))$ and the image of the projective pluricanonical representation $\overline{\rho}(\mathrm{Bim}(X))$ have bounded finite subgroups. 
\end{theorem}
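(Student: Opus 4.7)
The plan is to reduce, via equivariant resolution of indeterminacies, to bounding finite subgroups in the pluricanonical representation of $\mathrm{Aut}(X')$ for a compact complex manifold $X'$ bimeromorphic to $X$, and then to exploit the fact that this representation factors through a discrete quotient of $\mathrm{Aut}(X')$ that admits a faithful integral representation.

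First, given a finite subgroup of $\rho(\mathrm{Bim}(X))$, I would pick preimages of its elements in $\mathrm{Bim}(X)$; applying equivariant resolution of indeterminacies to the finitely generated subgroup they generate produces a compact complex manifold $X'$ bimeromorphic to $X$ on which these preimages act biholomorphically. Since $\dim H^0(X', mK_{X'}) = \dim H^0(X, mK_X)$ is a bimeromorphic invariant, it suffices to bound finite subgroups of $\rho(\mathrm{Aut}(X'))$ uniformly in $X'$. I would then invoke a Lieberman--Matsushima-type fact (compare \cite[Chapter~14]{Ue75}) that the identity component $\mathrm{Aut}^0(X')$ acts trivially on $H^0(X', mK_{X'})$ whenever $\kappa(X') \geq 0$; the case $\kappa(X') = -\infty$ is trivial, as the target is then zero. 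Consequently both $\rho$ and $\overline{\rho}$ descend to representations of the discrete quotient $\pi_0(\mathrm{Aut}(X')) = \mathrm{Aut}(X')/\mathrm{Aut}^0(X')$.

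Second, I would bound finite subgroups of $\pi_0(\mathrm{Aut}(X'))$ via Minkowski's classical theorem, by embedding $\pi_0(\mathrm{Aut}(X'))$ into some $\GL_N(\ZZ)$ through its action on a suitable integral lattice: in the K\"ahler case one can use $H^*(X', \ZZ)$, while for general compact complex $X'$ one augments the Betti cohomology by auxiliary natural invariants such as the Albanese torus, the N\'eron--Severi lattice, or Bott--Chern cohomology in order to achieve faithfulness. Transferring this bound to $\rho(\pi_0(\mathrm{Aut}(X')))$ then requires an analysis of the kernel of $\rho$ on $\pi_0(\mathrm{Aut}(X'))$: its elements act trivially on the image of the pluricanonical map, hence lie in the group of fiberwise biholomorphisms of the Iitaka fibration, so a separate argument on the general fiber (which has Kodaira dimension zero) controls its contribution. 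The case of $\overline{\rho}$ is treated analogously after lifting a finite subgroup of $\mathrm{PGL}(H^0(X, mK_X))$ to a finite central extension inside $\GL(H^0(X, mK_X))$.

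The main obstacle is producing the required faithful integral representation of $\pi_0(\mathrm{Aut}(X'))$ in the non-K\"ahler setting, where the action on ordinary Betti cohomology may have a nontrivial kernel modulo $\mathrm{Aut}^0(X')$; overcoming this by combining several natural integral invariants is presumably the technical heart of the argument in \cite{Loginov}.
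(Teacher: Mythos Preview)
The paper does not prove this theorem; it is quoted verbatim from \cite{Loginov} as background and never argued in the text. So there is no proof in the paper to compare against, and I comment only on the viability of your sketch.

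Your first step already fails. Given a finite subgroup $G\subset\rho(\mathrm{Bim}(X))$, you lift its elements to $\tilde g_1,\dots,\tilde g_k\in\mathrm{Bim}(X)$ and propose to regularize the group $H=\langle\tilde g_1,\dots,\tilde g_k\rangle$ on a bimeromorphic model $X'$. But $H$ is typically infinite: the kernel of $\rho$ contains, for instance, every bimeromorphic self-map acting fiberwise on the Iitaka fibration, and there is no reason for the chosen lifts to satisfy the relations of $G$. Equivariant resolution of indeterminacies is available for \emph{finite} groups of bimeromorphic maps, not for arbitrary finitely generated ones; there is in general no compact model on which an infinite subgroup of $\mathrm{Bim}(X)$ acts biholomorphically. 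Nor can you bypass this by lifting $G$ itself to a finite subgroup of $\mathrm{Bim}(X)$, since such a lift need not exist.

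Even granting the regularization, the remainder of the outline does not produce a \emph{uniform} bound. The Minkowski constant coming from the action on $H^*(X',\ZZ)$ (or on your augmented lattice) depends on the rank of that lattice, hence on the Betti numbers of $X'$; these are not bimeromorphic invariants and grow without bound under blow-ups, while your $X'$ depends on $G$. You note that uniformity in $X'$ is required, but nothing in the proposal supplies it. A route that avoids this difficulty is to work not with a varying model of $X$ but with a \emph{fixed} projective object depending only on $X$: the image $Y$ of the pluricanonical map, on which $\overline{\rho}(\mathrm{Bim}(X))$ already acts biregularly by Proposition~\ref{prop-action-bimer}. One can then exploit the cohomology of a functorial resolution of $Y$ together with the constraint, valid for every compact complex manifold, that all eigenvalues of every $\rho(g)$ have absolute value $1$ (the pseudo-volume form argument in \cite[Chapter~14]{Ue75}); this is closer in spirit to what \cite{Loginov} does.
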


In contrast to Theorem \ref{thm-finite-moishezon}, in Theorem \ref{prop-pluri-bfs} we do not need the assumption that $X$ is Moishezon, but instead of finiteness, we establish a weaker property of $\rho(\mathrm{Bim}(X))$. However, this is the best we can get as there exist compact complex manifolds with infinite~\mbox{$\rho(\mathrm{Bim}(X))$}, see Example~\ref{example-infinite} below. 
Thus, the best one can hope for in general is finiteness of the group~\mbox{$\overline{\rho}(\mathrm{Bim}(X))$}. 

If $X$ is a curve, then finiteness of ${\rho}(\mathrm{Bim}(X))$ and $\overline{\rho}(\mathrm{Bim}(X))$ follows either from Theorem \ref{thm-finite-moishezon} or from a simple case by case analysis. 
If $X$ is a compact complex surface, then $\overline{\rho}(\mathrm{Bim}(X))$ is finite. Indeed, 
finiteness of $\overline{\rho}(\mathrm{Bim}(X))$ is trivial for~\mbox{$\kappa(X)=0$}, while 
for $\kappa(X)=1$ it is proved in \cite[Proposition~1.2]{PSh20}. If~\mbox{$\kappa(X)=\dim X$}, then the group $\mathrm{Bim}(X)$ is itself finite by \cite[Corollary~14.3]{Ue75}.  
In higher dimensions, we do not know about finiteness results 
for $\overline{\rho}(\mathrm{Bim}(X))$ when the Kodaira dimension is not maximal possible.  
Hence we formulate the following

\begin{question}
\label{main-question}
Does there exist a compact complex manifold $X$ such that the image of the projective pluricanonical representation $\overline{\rho}(\mathrm{Bim}(X))$ is infinite?
\end{question}

In this paper, we prove 
\begin{theorem}
\label{main-theorem}
Let $X$ be a compact complex manifold of Kodaira dimension 
\[
\kappa(X)=\dim X-1.
\] 
Then  $\overline{\rho}(\mathrm{Bim}(X))$ is finite. 
\end{theorem}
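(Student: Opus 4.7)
The plan is to reduce finiteness of $\overline{\rho}(\mathrm{Bim}(X))$ to a finiteness statement for a projective log pair via the canonical bundle formula, and then invoke the pair analog of Theorem \ref{thm-finite-moishezon}.

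First, since $\dim X - \kappa(X) = 1$, the Iitaka fibration of $X$ has $1$-dimensional general fiber, necessarily an elliptic curve (a general fiber has Kodaira dimension $0$). After replacing $X$ by a suitable smooth bimeromorphic model, one may assume the Iitaka fibration is a proper morphism $f\colon X\to Y$ with $Y$ projective of dimension $\dim X-1$; the target is projective because it is birational to the image of the pluricanonical map. Accordingly, the image of the pluricanonical map of $X$ is, up to normalization, $Y$ equipped with a polarization induced by the pushforward of a multiple of $K_X$. The group $\mathrm{Bim}(X)$ acts on $Y$ by birational automorphisms, since the Iitaka fibration is canonically defined, and $\overline{\rho}(\mathrm{Bim}(X))$ factors through the resulting homomorphism $\mathrm{Bim}(X)\to\mathrm{Bir}(Y)$.

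Second, I would apply the paper's analytic canonical bundle formula in relative dimension~$1$ to $f$. This yields a $\mathbb{Q}$-linear equivalence $K_X\sim_{\mathbb{Q}} f^*(K_Y+B_Y+M_Y)$, where $B_Y$ is the discriminant boundary and $M_Y$ the moduli part on the projective variety $Y$. Because $\kappa(X)=\dim Y$, the divisor $K_Y+B_Y+M_Y$ is big, so $(Y,B_Y+M_Y)$ is a projective log pair of log general type. Pushforward under $f$ provides, for sufficiently divisible $m$, an identification of $H^0(X,mK_X)$ with $H^0(Y,\lfloor m(K_Y+B_Y+M_Y)\rfloor)$ that is equivariant for the natural action of $\mathrm{Bim}(X)$ via $\mathrm{Bir}(Y)$.

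Third, under this equivariant identification, the projective pluricanonical representation of $\mathrm{Bim}(X)$ on $H^0(X,mK_X)$ matches a projective log pluricanonical representation of the image of $\mathrm{Bim}(X)$ inside $\mathrm{Bir}(Y,B_Y+M_Y)$. Since $(Y,B_Y+M_Y)$ is a projective pair of log general type, the projective pair analogs of the Deligne--Ueno theorem cited in the introduction (\cite{FG14}, \cite{HX16}, \cite{FX25}) imply that the image of this log pluricanonical representation is finite, which gives the desired finiteness of $\overline{\rho}(\mathrm{Bim}(X))$. The main obstacle is the second step: establishing an analytic canonical bundle formula in which the total space is only a complex variety while the base is projective, and verifying that its discriminant and moduli parts are sufficiently canonical for the $\mathrm{Bim}(X)$-action to descend cleanly. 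The failure of the analytic canonical bundle formula in relative dimension~$2$, noted in the abstract, shows that the hypothesis $\kappa(X)=\dim X-1$ is being used in an essential way and rules out a naive extension of this argument to smaller Kodaira dimensions.
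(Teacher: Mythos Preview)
Your overall strategy coincides with the paper's: pass to a model of the Iitaka fibration over a smooth projective base, apply the analytic canonical bundle formula in relative dimension~$1$, and deduce finiteness from a statement about the projective base pair. However, the proposal leaves two genuine gaps.

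First, you correctly flag as ``the main obstacle'' that the discriminant $B_Y$ and moduli part $M_Y$ must be preserved by the induced action of $\mathrm{Bim}(X)$ on $Y$, but you do not resolve it. This is the technical heart of the paper (the whole of Section~5, culminating in Proposition~\ref{prop-delta-prime-is-invariant}). The discriminant $\Delta(\sigma,R)$ is \emph{not} automatically invariant: Examples~\ref{exam-delta-1}--\ref{exam-delta-3} show it can fail. The paper must replace $R$ by a carefully normalized $R'$ (Construction~\ref{construction-xi}) and then prove invariance of $\Xi=\Delta(\sigma,R')$ via a monodromy/multiplicity description (Propositions~\ref{prop-monodromy-description}--\ref{prop-mon-is-preserved}), together with Lemma~\ref{prop-action-on-target-of-j-map} for $M_Y$. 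Without this, you have no map from $\mathrm{Bim}(X)$ into $\mathrm{Bir}(Y,B_Y+M_Y)$, so the appeal to \cite{FG14}, \cite{HX16}, \cite{FX25} has no input.

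Second, even granting invariance, your final step needs care. The results in \cite{FG14}, \cite{HX16} are stated for lc \emph{pairs}, i.e.\ with effective boundary, whereas the canonical bundle formula here produces only an lc \emph{sub}-pair: after the required blow-ups of $X$ the divisor $R$ is anti-effective and $\Delta(\sigma,R)$ can have negative coefficients. The paper circumvents this in two ways: the normalization in Construction~\ref{construction-xi} makes $\Xi$ effective (Proposition~\ref{prop-xi-is-effective}), and the finiteness is obtained not from log-pluricanonical representation theorems but from the more elementary Proposition~\ref{prop-group-preserves-pair}, which works for lc sub-pairs with big $K_Y+\Delta$ and only uses that $\Gamma$ acts \emph{biregularly} on $Y$ (Proposition~\ref{prop-action-bimer}), not merely birationally as you assume. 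Your proposed identification $H^0(X,mK_X)\cong H^0(Y,\lfloor m(K_Y+B_Y+M_Y)\rfloor)$ is thus unnecessary, and making it equivariant would in any case require exactly the invariance you have not established.
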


\begin{remark}
We point out that an analog of Theorem~\ref{main-theorem} does not hold 
if we replace the projective pluricanonical representation of the (bimeromorphic) automorphism 
group by the image under the algebraic reduction, even in the case of compact complex surfaces,
see e.g.~\cite{Shramov-Elliptic}.
\end{remark}

We also formulate a similar question for varieties defined over a field of positive characteristic. 

\begin{question}
\label{second-question}
Does there exist a projective variety $X$ defined over a field of positive characteristic such that the image of the projective pluricanonical representation~\mbox{$\overline{\rho}(\mathrm{Bim}(X))$}  is infinite?
\end{question}

The negative answer to Question \ref{second-question} is known only in the case of curves, varieties of general type, and elliptic surfaces of Kodaira dimension $1$, see~
\mbox{\cite[Corollary 1.5]{Gu20}}. In particular, the answer is not known in the case of quasi-elliptic surfaces of Kodaira dimension $1$. 

To prove Theorem \ref{main-theorem}, we establish a version of the canonical bundle formula which works for a proper morphism from a complex variety to a projective variety. 
This proves a version of Prokhorov--Shokurov conjecture in the analytic setting in relative dimension at most $1$, see Conjecture \ref{conj-analytic-PS} and Proposition \ref{prop-proof-of-conjecture}. 
Recall that the canonical bundle formula provides a way to relate the canonical class of the total space of the fibration with the canonical class of its base. It works under some natural assumptions on the fibration. We refer the reader to Section \ref{sec-preliminaries} for the relevant notation.

\begin{proposition}[{see Proposition \ref{prop:canonical-bundle-formula}, cf. \cite[2.15]{Fujita}}]
\label{prop:canonical-bundle-formula-intro}
Let $X$ be a compact complex manifold  with a 
fibration $\sigma\colon X\to Y$ over a smooth projective variety $Y$ such that the  typical fiber of $\sigma$ is an elliptic curve. Let $R$ be a $\sigma$-vertical $\QQ$-divisor on $X$. Assume further that  $K_X+R\sim_{\mathbb{Q}}0/Y$. 
Then
\begin{equation}
\label{eq-MD-intro}
K_X+R\sim_{\QQ} \sigma^*\left(K_Y+\Delta+M \right),
\end{equation}
where $M$ is a $\mathbb{Q}$-divisor on $Y$, and 
$$
\Delta=\sum\limits_{Z\subset Y} (1-\lct(X,R+\sigma^*Z)) Z.
$$
Here $Z$ runs through the set of all prime divisors on $Y$, and the log-canonical threshold is computed over a typical point of $Z$. 
If the $j$-invariant map $j\colon Y\dashrightarrow \mathbb{P}^1$ is a morphism, then 
$M\sim_{\QQ}\frac{1}{12}j^*Q$  
for a point $Q\in \mathbb{P}^1$, in particular, $M$ is nef. 
Moreover, if $(X, R)$ is an lc sub-pair, then $(Y, \Delta)$ is an lc sub-pair. 
\end{proposition}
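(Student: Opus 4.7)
The plan is to follow Fujita's classical approach to the canonical bundle formula for elliptic fibrations \cite[2.15]{Fujita}, observing that its key steps are purely local on $Y$ and therefore transfer to our analytic setting (in which $X$ is only required to be a compact complex manifold, while $Y$ is still projective and smooth).

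First I would define $\Delta$ exactly as in the statement and obtain $M$ by subtraction. Since the typical fiber is a connected elliptic curve, one has $\sigma_*\cO_X=\cO_Y$, and combined with $K_X+R\sim_{\QQ}0/Y$ this produces a unique $\QQ$-divisor class $N$ on $Y$ with $K_X+R\sim_{\QQ}\sigma^*N$. Setting $M:=N-K_Y-\Delta$ then yields the relation \eqref{eq-MD-intro} by construction. Only finitely many prime divisors $Z\subset Y$ contribute to $\Delta$: precisely those in the discriminant locus of $\sigma$ together with the codimension-one components of $\sigma(\Supp R)$.

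Next I would verify the coefficient formula by localizing at the generic point of each prime $Z\subset Y$. Restricting to a small analytic neighborhood of a general point of $Z$, the base becomes the germ of a smooth curve and $\sigma$ restricts to a proper analytic elliptic fibration over this germ. Here Kodaira's classification of singular fibers applies verbatim in the analytic category, and Fujita's local computation based on this classification identifies the contribution of $Z$ to the canonical bundle formula with $1-\lct(X,R+\sigma^*Z)$. The sub-lc conclusion for $(Y,\Delta)$ is then immediate, since in the sub-lc case each such threshold lies in $[0,1]$, so every coefficient of $\Delta$ is at most $1$.

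Finally, to identify $M$ with $\tfrac{1}{12}j^*Q$ when $j$ extends to a morphism, I would pass to a relative minimal model of $\sigma$, constructed locally on $Y$ in the analytic category by contracting $(-1)$-curves in fibers. The Hodge $\QQ$-line bundle $\omega:=\sigma_*\omega_{X^{\min}/Y}$ satisfies $12\omega\sim j^*\cO_{\PP^1}(1)$ by the classical Weierstrass computation, and the difference $M-\omega$ is effective and supported on the discriminant, hence has already been absorbed into $\Delta$; this gives $M\sim_{\QQ}\tfrac{1}{12}j^*Q$, and since $j^*Q$ is effective, $M$ is nef. The main obstacle is the technical translation of Fujita's local computations at singular fibers into the analytic category; because these arguments are purely local on $Y$ and rely on Kodaira's classification (which is itself analytic in nature), I expect this to be delicate but require no essentially new ingredient beyond a careful bookkeeping in the analytic setting.
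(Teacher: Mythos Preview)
Your approach diverges from the paper's at the key globalization step. You propose to identify $M$ by constructing a relative minimal model $X^{\min}$ (locally on $Y$) and invoking the Hodge line bundle $\sigma_*\omega_{X^{\min}/Y}$. The paper instead cuts $Y$ down to a curve $C$ by intersecting general very ample divisors, restricts $\sigma$ to a compact complex surface $S\to C$, and applies Kodaira's formula for compact elliptic surfaces (Theorem~\ref{theorem:Kodaira}, Proposition~\ref{prop-elliptic-surface-with-divisor}) directly on $S$. The global identity $M\sim_{\QQ}\tfrac{1}{12}j^*Q$ is then obtained by a Picard-theoretic contradiction: if it failed, Corollary~\ref{corollary:restriction-to-a-curve} (which uses projectivity of $Y$ through Serre vanishing) would produce a curve $C$ on which the discrepancy survives, while the surface formula on $S=\sigma^{-1}(C)$ combined with Lemma~\ref{lem-delta-restricted} and injectivity of $\sigma^*\colon\Pic(C)\to\Pic(S)$ forces it to vanish there.

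Your route has a genuine gap at the $M$-step. First, in relative dimension~$1$ over a base of dimension greater than~$1$, ``contracting $(-1)$-curves in fibers'' does not produce a minimal model: the exceptional loci you need to contract are divisors in $X$, not curves, and running a relative MMP when $\sigma$ is not projective is precisely the difficulty the paper's hyperplane-section trick is designed to circumvent. Second, even granting a global $X^{\min}$, the sentence ``the difference $M-\omega$ is effective and supported on the discriminant, hence has already been absorbed into $\Delta$'' is not an argument: $M$ is only a $\QQ$-linear equivalence class, and you have not explained why this class coincides with that of $\omega$ rather than differing by something supported on the discriminant---which, $\Delta$ having already been subtracted, would not be ``absorbed'' anywhere. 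The paper's reduction to compact surfaces sidesteps both problems, since Theorem~\ref{theorem:Kodaira} is already established for arbitrary compact complex surfaces (no projectivity needed on the total space) and the passage from curves back up to $Y$ is handled cleanly by the Picard-group argument. A smaller point: your phrase ``the base becomes the germ of a smooth curve'' is false when $\dim Y>1$; an analytic neighborhood of a general point of $Z$ is a polydisc, and you need a transverse slice, which is again exactly what the hyperplane sections supply.
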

The $\mathbb{Q}$-divisor $\Delta$ in \eqref{eq-MD-intro} is called the \emph{discriminant $\mathbb{Q}$-divisor} of $\sigma$, and $M$ is called the \emph{moduli $\mathbb{Q}$-divisor} of $\sigma$. 
This proposition is well known in the case when $X$ is a projective variety, see \cite[Example 7.16]{PS09}. 
To prove Proposition \ref{prop:canonical-bundle-formula-intro}, we reduce it to the two-dimensional case, where it is proved in the same way as for projective surfaces, see e.g. \cite[\S 2]{Fujita}. More results on the canonical bundle formula can be found in Section \ref{sec-canonical-bundle}, where we also note that Conjecture \ref{conj-analytic-PS} does not hold for compact complex manifolds in the relative dimension $2$, see Example \ref{exam-atiyah}.

We point out that many of our arguments used to prove Proposition \ref{prop:canonical-bundle-formula-intro} go in parallel with those of Fujita in \cite{Fujita}. Moreover, in \cite[2.17]{Fujita} the author expresses an expectation that most of his arguments should work in the analytic setting. In fact, our results confirm this expectation in the case when the morphism $\sigma\colon X\to Y$ is not necessarily projective, while $Y$ is assumed to be projective.

\subsection*{Sketch of proof}
We explain the idea of the proof of Theorem \ref{main-theorem}. Let $\sigma\colon X\dashrightarrow Y$ be the pluricanonical map, and $\Gamma = \overline{\rho}(\mathrm{Bim}(X))$ be the image of the projective pluricanonical representation. 
Note that $\Gamma$ acts on $Y$ biregularly, and the map $\sigma$ is $\mathrm{Bim}(X)$-equivariant. 

Let us first consider an ideal situation, and then we explain how to treat the general case. 
Assume that the pluricanonical map $\sigma\colon X\to Y$ is a morphism, $Y$ is smooth, a typical fiber of $\sigma$ is an elliptic curve, and that 
the $j$-invariant map~\mbox{$j\colon Y\dashrightarrow \mathbb{P}^1$} is a morphism. 
Using Proposition~\ref{prop:canonical-bundle-formula-intro}, 
we can write the canonical bundle formula
\[
K_X\sim_{\QQ} \sigma^*(K_Y+\Delta+M),
\]
where $\Delta$ is the discriminant $\mathbb{Q}$-divisor, and $M$ is the moduli $\mathbb{Q}$-divisor.
Using the fact that the map $\sigma$ is $\mathrm{Bim}(X)$-equivariant, in Proposition~\ref{prop-delta-prime-is-invariant} we show that $\Delta$ is $\Gamma$-invariant. In Lemma \ref{prop-action-on-target-of-j-map}, we explain that $M$ can also be chosen $\Gamma$-invariant. 
Also, since $X$ is smooth, the pair $(Y, \Delta+M)$ is lc. 
Since $\sigma$ is the pluricanonical map, we have $K_X\sim_{\QQ} \sigma^*H$ for some ample $\QQ$-divisor on $Y$. Hence, $K_Y+M+\Delta$ is an ample $\QQ$-divisor on $Y$. 
In Proposition \ref{prop-delta-prime-is-invariant} we prove that the discriminant $\mathbb{Q}$-divisor $\Delta$, as well as the moduli $\mathbb{Q}$-divisor $M$, is $\Gamma$-invariant. 
Then Proposition \ref{prop-group-preserves-pair} implies that in this case the automorphism group of the pair $(Y, \Delta+M)$ is finite, which means that $\Gamma$ is finite as well.

Now consider the general case.  
In this case, $\sigma\colon X\dashrightarrow Y$ need not be everywhere defined, $Y$ need not be smooth and the $j$-invariant map need not be everywhere defined as well.   
In Proposition \ref{prop-first-modification}, we construct bimeromorphic modifications $X_1$ of $X$ and $Y_1$ of $Y$, obtaining a $\mathrm{Bim}(X)$-equivariant fibration $\sigma_1\colon X_1\to Y_1$ of complex manifolds. 
Moreover, $\Gamma$ acts on $Y_1$ biregularly. 
In Proposition \ref{prop-elliptic-or-moishezon} we show that either its typical fiber is an elliptic curve, or $X$ is Moishezon, in which case the result follows from Theorem~\ref{thm-finite-moishezon}. Then we pass to a further modification $\sigma_2\colon X_2\to Y_2$ which enjoys the same properties as $\sigma_1$, and moreover, the $j$-invariant 
map~\mbox{$j\colon Y_2\to \mathbb{P}^1$} is a morphism. At this point the canonical class $K_{X_2}$ may not be $\QQ$-linearly trivial over $Y_2$. However, there exists a $\sigma_2$-vertical $\QQ$-divisor $R_2$ on $X_2$ such that $K_{X_2}+R_2$ is $\QQ$-linearly trivial over $Y_2$. 

%We construct another modification $\sigma_3\colon X_3\to Y_3$ with some special properties.  

Now, using Proposition \ref{prop:canonical-bundle-formula-intro}, we can write the canonical bundle formula
\[
K_{X_2}+R_2\sim_{\QQ} \sigma_2^*(K_{Y_2}+\Delta_{Y_2}+M_{Y_2}),
\]
where $\Delta_{Y_2}$ is the discriminant $\mathbb{Q}$-divisor, and $M_{Y_2}$ is the moduli $\mathbb{Q}$-divisor. By perturbing $R$ by a divisor $\sigma_2^*D$, where $D$ is a $\mathbb{Q}$-divisor on $Y_2$, in Proposition \ref{prop-delta-prime-is-invariant} we prove that the discriminant $\mathbb{Q}$-divisor $\Delta_{Y_2}=\Delta(\sigma_2, R_2)$, as well as the moduli $\mathbb{Q}$-divisor $M_{Y_2}$, is $\Gamma$-invariant. 
It turns out that the $\mathbb{Q}$-divisor $K_{Y_2}+\Delta_{Y_2}+M_{Y_2}$ is big and nef. Then Proposition \ref{prop-group-preserves-pair} implies that the automorphism group $(Y_2,\Delta_{Y_2}+M_{Y_2})$ is finite. Hence, $\Gamma$ is finite as well.

\textbf{Acknowledgements.} 
The authors thank Andrey Soldatenkov for useful discussions, and 
Kento Fujita for providing us with a reference to~\cite{Wilson}. 
This work was performed at the Steklov International Mathematical Center and supported by the Ministry of Science and Higher Education of the Russian Federation (agreement no. 075-15-2022-265), supported by
the HSE University Basic Research Program, and the Simons Foundation. The work of the first author is supported by the state assignment of MIPT (project FSMG-2023-0013). The first author is a Young Russian Mathematics award winner and would like to thank its sponsors and jury.

\section{Preliminaries}
\label{sec-preliminaries}
We work over the field of complex numbers. We refer the reader to \cite{Ue75} for the basic facts and definitions concerning complex varieties and manifolds. 
We refer to~\cite{KM98} for the basic definitions concerning pairs and their singularities.

\subsection{Complex varieties}
By a \emph{complex variety} we mean an irreducible reduced complex space. 
%A morphism of complex varieties is a holomorphic map between them.
 A smooth complex variety is called a \emph{complex manifold}. A \emph{complex surface} is a complex manifold of dimension~$2$. By a (Zariski) open subset of a complex variety $X$ we mean a subset of the form~$X \setminus Z$, where $Z$ is a closed analytic subset in $X$. By a \emph{typical point} of $X$ we mean a point in some non-empty Zariski open subset of $X$.
A \emph{typical fiber} of a holomorphic map $f\colon X\to Y$ is a fiber over a typical point
of~$Y$. 

A proper surjective holomorphic map $f\colon X\to Y$ of reduced complex spaces is called a \emph{bimeromorphic modification} if there exist  nowhere dense closed analytic subsets $V\subset X$ and $W\subset Y$ %that are  in every component of $X$ and $Y$ respectively, 
such that $f$ restricts to a biholomorphic map~\mbox{$X\setminus V\to Y\setminus W$}. A \emph{meromorphic map} $f\colon X\dashrightarrow Y$ of reduced complex analytic spaces $X$ and $Y$ is a holomorphic map defined outside a nowhere dense subset such that the closure of its graph~$\overline{\Gamma_f}\subset X\times Y$ is a closed analytic subset of $X\times Y$, and the natural projection~$\overline{\Gamma_f} \to X$ is a modification. The map $f$ is called \emph{bimeromorphic} if the natural projection $\overline{\Gamma_f}\to Y$ is a modification as well. 

Given a compact complex variety $X$, by $\mathrm{Bim}(X)$ we denote its group of bimeromorphic self-maps, and by $\mathrm{Aut}(X)$ we denote its group of biholomorphic self-maps.

\subsection{Divisors}
By a divisor on a complex variety we mean a Weil divisor, and by a $\mathbb{Q}$-divisor we mean a $\mathbb{Q}$-Weil $\mathbb{Q}$-divisor. 
We write $D\leq D'$ if $D=\sum a_i D_i$ and~\mbox{$D'=\sum a'_i D'_i$}, where $D_i$ and $D'_i$ are prime divisors, and $a_i\leq a'_i$ for any $i$. 
We say that a $\mathbb{Q}$-divisor is anti-effective if $-D\geq0$, that is, if $-D$ is effective. 
%For a $\mathbb{Q}$-divisor $D=\sum a_i D_i$, we put $D^{\geq 0} = \sum \max (0, a_i) D_i$, and $\lceil D\rceil = \sum \lceil a_i\rceil D_i$. 

We use the following notation. For two $\mathbb{Q}$-divisors $D$ and $D'$ on a complex variety~$X$, we write 
\[
D\sim_N D'
\]
for some positive integer $N$, 
if
$ND$ and $ND'$ are integral 
divisors and $ND\sim ND'$. 
Note that $\sim_N$ is an equivalence relation on the set of $\mathbb{Q}$-divisors. 
Clearly, $D \sim_N D'$ implies $D\sim_{M} D'$ for any $M$ divisible by $N$. 
We write
\[
D\sim_{\mathbb{Q}} D',
\]
if $D\sim_N D'$ holds for some $N$. 

For a contraction $\sigma\colon X\to Y$ of compact complex varieties and for a $\mathbb{Q}$-divisor $D$ on $X$, we write 
\[
D\sim_{N} 0/Y
\] if $D\sim_N \sigma^* D'$ holds for some $\mathbb{Q}$-divisor $D'$ on $Y$. We write 
\[
D\sim_{\mathbb{Q}} 0/Y
\]
if $D\sim_{N} 0/Y$ holds for some $N$. We say that a divisor $D$ on $X$ is $\sigma$-vertical, 
if~\mbox{$\sigma(D)\neq Y$} holds.  

We say that a divisor $D=\sum a_i D_i$ with $a_i\in\mathbb{Z}$ has multiplicity $m$ if $m$ is the greatest common divisor of $a_i$.%, so we can write $D=mD_1$. 

Given a divisor $D$ on a normal projective variety $Y$, by $[D]$ we will denote its linear equivalence class.  
By $\mathrm{Aut}(Y, [D])$ we denote the subgroup of all automorphisms of~$Y$ which preserve the divisor class $[D]$. Furthermore, if $\Delta$ is a $\QQ$-divisor on $Y$, we denote by $\mathrm{Aut}(Y, \Delta)$ the subgroup of all automorphisms of $Y$ which preserve $\Delta$ as a $\mathbb{Q}$-divisor. This means that $\mathrm{Aut}(Y, \Delta)$ preserves the support of $\Delta$ as a set of points, but the components of $\Delta$ with the same coefficients may be permuted by this group.

\subsection{Pairs and singularities}   By a \emph{contraction} we mean a proper
morphism~\mbox{$f\colon X \to Y$} of normal complex varieties such that~$f_*\oo_X =
\oo_Y$. In particular, $f$ is surjective and has connected fibers. A
\emph{fibration} is defined as a contraction~\mbox{$f\colon X\to Y$} such
that $\dim Y<\dim X$.

A \emph{pair} (resp., a \emph{sub-pair}) $(X, B)$ consists
of a normal complex variety $X$ and a  $\mathbb{Q}$-divisor $B$, called a \emph{boundary} (resp., a \emph{sub-boundary}), with
coefficients in $[0, 1]$ (resp., in $(-\infty, 1]$) such that $K_X + B$ is $\mathbb{Q}$-Cartier.  
Let $\phi\colon W \to X$ be a
log resolution of~\mbox{$(X,B)$} and let 
\[
K_W +B_W = \phi^*(K_X +B)
\]
be the log pull-back of $(X, B)$. 
The \emph{log discrepancy} of a prime divisor $D$ on $W$ with respect
to $(X, B)$ is $1 - \mathrm{coeff}_D B_W$ and it is denoted by $a(D, X, B)$. We
say that~\mbox{$(X, B)$} is lc if $a(D, X, B)\geq 0$  for every $D$. 

Assume that $(X, B)$ is an lc sub-pair. 
%Given an lc sub-pair $(X,D)$, 
We denote by $\lct(X,B;D)$ the log canonical threshold of $(X,B)$ with respect to an effective $\mathbb{Q}$-divisor $D$:
\[
\mathrm{lct} (X, B; D) = \mathrm{sup} \{ \lambda \in \mathbb{Q}\ |\ (X, B + \lambda D)\ \text{is lc} \}. 
\]

\subsection{Pluricanonical representation}
\label{subsec-pluricanonical-rep} 
Let $X$ be a compact complex manifold. Assume that for the Kodaira dimension of $X$ we have $\kappa(X)\ge 0$. For any $m\ge 1$, consider the map
\[
\sigma_m\colon X \dasharrow \mathbb{P}(\mathrm{H}^0(X, \oo_X(mK_X))^\vee).
\]
For a sufficiently big and divisible $m$, we have $\dim \sigma_m(X)=\kappa(X)$. For such $m$, we put $\sigma=\sigma_m$. Thus, $\sigma$ is a well defined meromorphic map (up to the natural equivalence of meromorphic maps). 

\begin{remark}
\label{rem-ueno-connected}
By \cite[Lemma 5.6, Lemma 5.8]{Ue75} for a sufficiently big and divisible $m$ we have that the closures $Y_m$ of the images $\sigma_m(X)$ are bimeromorphic to each other, and the closure of a typical fiber of $\sigma_m$ is connected (and, in fact, irreducible). However, it is not clear whether $Y_m$ is normal. In the projective case, this can be proved using the finite generatedness of the canonical ring, see~\mbox{\cite[Corollary 1.1.2]{BCHM}}. On the other hand, there exists an example of a compact complex manifold (as well as an example of a projective variety with non-lc singularities) whose canonical ring is not finitely generated, see \cite[\S4]{Wilson}. 
\end{remark}
We have the following 

\begin{proposition}[{see e.g. \cite[Lemma 6.3]{Ue75}}]
\label{prop-action-bimer}
The group $\mathrm{Bim}(X)$ acts on $\sigma(X)$ biholomorphically.
\end{proposition}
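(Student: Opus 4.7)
The plan is to realize $\mathrm{Bim}(X)$ as acting on $V_m := \mathrm{H}^0(X,\mathcal{O}_X(mK_X))$ by pullback of pluricanonical forms, induce a projective action on the ambient space $\mathbb{P}(V_m^\vee)$ of the pluricanonical image, and then observe that $\sigma_m$ is equivariant with respect to these actions, so the image $\sigma(X)$ is preserved.

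First I would construct the linear action of $\mathrm{Bim}(X)$ on $V_m$. Given $g\in\mathrm{Bim}(X)$, let $U\subset X$ be the maximal open subset on which $g$ is biholomorphic onto an open subset of $X$. Since $X$ is a smooth compact complex manifold, the indeterminacy locus $X\setminus U$ has codimension at least~$2$ in $X$. For any $\omega\in V_m$, the pullback $g^*\omega$ is a holomorphic section of $\mathcal{O}_X(mK_X)$ over $U$, and, being a section of a line bundle on a smooth variety defined outside a codimension-$2$ set, it extends uniquely to a holomorphic section on all of $X$ by the Hartogs (Riemann) extension theorem. The chain rule gives $(g_1\circ g_2)^*=g_2^*\circ g_1^*$ on the common domain, and this equality propagates to all of $X$ by the uniqueness of extensions. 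Hence $g\mapsto (g^{-1})^*$ defines a group homomorphism $\rho\colon\mathrm{Bim}(X)\to\mathrm{GL}(V_m)$; composing with the natural projection $\mathrm{GL}(V_m)\to\mathrm{Aut}(\mathbb{P}(V_m^\vee))$ yields $\overline{\rho}$.

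Next, I would verify that $\sigma_m$ is $\mathrm{Bim}(X)$-equivariant. Fix a basis $s_0,\ldots,s_N$ of $V_m$; for a typical $x\in X$ we have $\sigma_m(x)=[s_0(x):\ldots:s_N(x)]\in\mathbb{P}(V_m^\vee)$. For $x$ chosen generically in the domain of $g$ so that $\sigma_m$ is also defined at $g(x)$, the identity $s(g(x))=(g^*s)(x)$ valid on $U$ gives
\[
\sigma_m(g(x))=[s_0(g(x)):\ldots:s_N(g(x))]=\overline{\rho}(g)\bigl(\sigma_m(x)\bigr).
\]
Taking closures, the analytic subset $\sigma(X)\subset\mathbb{P}(V_m^\vee)$ is preserved by $\overline{\rho}(g)$ for every $g\in\mathrm{Bim}(X)$.

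Finally, since each $\overline{\rho}(g)$ is a biholomorphism of the ambient projective space sending the closed analytic subset $\sigma(X)$ to itself, its restriction to $\sigma(X)$ is a biholomorphism, which yields the desired biholomorphic action of $\mathrm{Bim}(X)$ on $\sigma(X)$. The main technical subtlety lies in the extension of $g^*\omega$ across the indeterminacy locus of $g$, which crucially uses the smoothness of $X$; this is precisely why the statement is formulated for compact complex manifolds rather than for arbitrary compact complex varieties.
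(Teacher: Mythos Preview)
The paper does not supply its own proof of this proposition; it is stated with a reference to \cite[Lemma~6.3]{Ue75}, and the surrounding discussion in Section~\ref{subsec-pluricanonical-rep} simply records the pluricanonical representation $\rho$ and its projectivization $\overline{\rho}$ without further justification. Your argument is the standard one behind that reference and matches the paper's setup.

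There is, however, one imprecision worth flagging. You take $U$ to be the maximal open set on which $g$ is \emph{biholomorphic} onto an open subset, and then assert that $X\setminus U$ has codimension at least~$2$. That fails in general: a bimeromorphic self-map of a smooth compact manifold can contract a divisor (the standard Cremona involution of~$\mathbb{P}^2$ contracts three lines), so the non-biholomorphic locus may well have codimension~$1$. The correct choice is to let $U$ be the complement of the \emph{indeterminacy locus} of~$g$, which always has codimension at least~$2$ on a normal space. On this larger~$U$ the map $g$ is merely holomorphic, but the pull-back $g^*\omega$ is still a holomorphic section of $\mathcal{O}_X(mK_X)|_U$: the differential of~$g$ gives a holomorphic morphism $g^*\omega_X\to\omega_X|_U$, which simply acquires zeros along any contracted divisor rather than poles. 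With this adjustment your Hartogs extension step goes through, and the remainder of the argument (equivariance of $\sigma_m$ and invariance of the image) is correct.
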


Put $\mathcal{V}=\mathrm{H}^0(X, \oo_X(mK_X))$ and consider the induced homomorphism
\begin{equation*}
%\label{pluricanonical-rep}
\rho = \rho_m\colon \mathrm{Bim}(X)\to \mathrm{GL}(\mathcal{V}),
\end{equation*}
which is called a \emph{pluricanonical representation} of $\mathrm{Bim}(X)$.   
Consider the natural exact sequence of groups
\[
1 \to \mathbb{C}^\times \to \mathrm{GL}(\mathcal{V}) \xrightarrow{p} \mathrm{PGL}(\mathcal{V})\to 1.
\]
Then by the \emph{projective pluricanonical representation} of $\mathrm{Bim}(X)$ we mean the homomorphism 
\[
\overline{\rho}=p\circ \rho\colon \mathrm{Bim}(X)\to \mathrm{PGL}(\mathcal{V}).
\] 
Observe that the action of $\overline{\rho}(\mathrm{Bim}(X))$ on $\sigma(X)$ is faithful. We will denote $\overline{\rho}(\mathrm{Bim}(X))$ by $\Gamma$ and identify it with a subgroup of the automorphism group of $Y=Y_m$.  

The following example shows that we may not have finite $\rho(\mathrm{Bim}(X))$ when $X$ is a compact complex manifold.% $0\le \kappa(X)\le\dim X - 1$.

\begin{example}
\label{example-infinite}
By \cite{MS08}, there exists a non-algebraic K3 surface $X$ 
that admits a non-symplectic automorphism of infinite order acting on a holomorphic two-form via multiplication by a complex number which is not a root of unity. Consequently, $\rho(\mathrm{Bim}(X))$ is not a finite group. Note also that by \cite[Remark 14.6]{Ue75}, there exists a $3$-dimensional complex torus such that the group $\rho(\mathrm{Bim}(X))$ is infinite.
\end{example}

We formulate the following result for later use. 
\begin{lemma}[{\cite[Lemma 2.9]{PSh21b}}]
\label{lem-fiber-non-negative-kodaira-dim}
Let $X$ and $Y$ be compact complex varieties and let 
$$
\sigma\colon X \to Y
$$ 
be a dominant holomorphic map. Suppose that $\kappa(X)\ge 0$. Let $F$ be a typical fiber of $\sigma$, and let $F'$ be an
irreducible component of $F$. Then $\kappa(F')\ge 0$.
\end{lemma}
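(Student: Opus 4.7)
The plan is to restrict a nonzero pluricanonical section of $X$ to an irreducible component of a typical fiber and then identify the restriction with a nonzero pluricanonical section of that component via adjunction, exploiting the fact that the normal bundle of a typical fiber is trivial.

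First, using $\kappa(X)\geq 0$, fix $m\geq 1$ and a nonzero section $s\in H^0(X, mK_X)$. If $X$ is singular, replace it by a resolution of singularities; this changes neither $\kappa(X)$ nor the bimeromorphic class of $F'$. Next, shrink $Y$ to a nonempty Zariski open subset $U\subset Y_{\mathrm{reg}}$ lying outside $\sigma(X_{\mathrm{sing}})$ and outside the image of the critical locus of $\sigma|_{X_{\mathrm{reg}}}$; both of these are proper analytic subsets of $Y$ by Remmert's proper mapping theorem, so such $U$ exists. Over $U$, the map $\sigma$ is a submersion between complex manifolds, each fiber is smooth, and its irreducible components coincide with its connected components and are smooth closed submanifolds of $\sigma^{-1}(U)$.

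Second, we check that for typical $y\in U$ and every component $F'$ of $\sigma^{-1}(y)$ the restriction $s|_{F'}$ is nonzero. The zero locus $Z=\{s=0\}$ is an effective Cartier divisor on $X$, so each of its irreducible components has dimension $\dim X -1$. A vertical component of $Z$ does not meet $\sigma^{-1}(y)$ for typical $y$, while a horizontal component of $Z$ has typical fiber over $Y$ of dimension $\dim X-1-\dim Y<\dim F'$ and therefore cannot contain $F'$. Hence $s|_{F'}$ is a nonzero section of $mK_X|_{F'}$.

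Third, we apply adjunction. Since $F'$ is a connected component of a smooth fiber of a submersion of complex manifolds, $d\sigma$ trivializes the normal bundle: one has $N_{F'/X}\cong \sigma^*(T_yY)\otimes \mathcal{O}_{F'}\cong \mathcal{O}_{F'}^{\oplus \dim Y}$. By the adjunction formula,
\[
K_{F'} \;=\; K_X|_{F'}\otimes \det N_{F'/X} \;\cong\; K_X|_{F'},
\]
so $s|_{F'}$ yields a nonzero element of $H^0(F', mK_{F'})$, whence $\kappa(F')\geq 0$. The only real obstacle is the reduction to a setting where adjunction applies cleanly, namely a smooth component of a fiber inside a smooth ambient manifold with a submersion to a smooth base; once this is arranged, the triviality of the normal bundle and the conclusion are immediate.
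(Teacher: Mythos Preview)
The paper does not prove this lemma; it simply quotes it from \cite{PSh21b}. Your argument is correct and is essentially the standard one: reduce to smooth $X$ by resolution (using that Kodaira dimension is defined via a resolution in Ueno's conventions, and that for a typical $y$ the components of the new fiber are bimeromorphic to those of the old one), restrict a nonzero pluricanonical section to a component of a smooth typical fiber, and identify $K_X|_{F'}$ with $K_{F'}$ via adjunction using the triviality of the normal bundle of a fiber of a submersion.

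One cosmetic point: it is cleaner to resolve $X$ \emph{before} choosing $s$, since on a singular $X$ the sheaf $\mathcal{O}_X(mK_X)$ need not be a line bundle and the hypothesis $\kappa(X)\ge 0$ is by definition a statement about a resolution. Your intent is clear, but swapping the order of these two sentences removes any ambiguity. The dimension count in your second paragraph (horizontal components of $Z$ meet a typical fiber in something of dimension $<\dim F'$, vertical components miss it entirely) is the right way to see that $s|_{F'}\neq 0$, and the adjunction step is unproblematic once you have arranged a submersion onto a smooth base.
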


\section{Preparations}
In this section, we collect some auxiliary results that will be used in the proof of Theorem \ref{main-theorem}. 

\begin{lemma}[{\cite[Lemma 2.3]{Br22}}]
\label{lem-brion}
Let $D$ be a big divisor on a normal projective variety $X$. Then $\mathrm{Aut}(Y, [D])$ is a linear algebraic group.
\end{lemma}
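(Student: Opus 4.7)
The plan is to embed $\mathrm{Aut}(X,[D])$ as a closed subgroup of a projective linear group by means of the rational map defined by a sufficiently divisible multiple of $D$. Since $D$ is big, I can choose a positive integer $m$ large enough that the rational map
\[
\phi_m \colon X \dashrightarrow \mathbb{P}\bigl(H^0(X,\mathcal{O}_X(mD))^\vee\bigr)
\]
is birational onto its image. Every $\alpha \in \mathrm{Aut}(X,[D])$ satisfies $\alpha^*(mD) \sim mD$, so pull-back of sections defines a linear automorphism of $H^0(X,\mathcal{O}_X(mD))$ that is well defined up to a scalar. This produces a group homomorphism
\[
\tau \colon \mathrm{Aut}(X,[D]) \longrightarrow \mathrm{PGL}\bigl(H^0(X,\mathcal{O}_X(mD))\bigr),
\]
and by construction the map $\phi_m$ is $\tau$-equivariant.

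Next I would verify that $\tau$ is injective. If $\tau(\alpha)$ is trivial, then there is a scalar $\lambda_\alpha \in \mathbb{C}^\times$ with $\alpha^* s = \lambda_\alpha \cdot s$ for every $s \in H^0(X,\mathcal{O}_X(mD))$, so $\phi_m \circ \alpha = \phi_m$ as rational maps; since $\phi_m$ is birational onto its image, $\alpha$ must restrict to the identity on a dense Zariski open subset of $X$, and hence $\alpha = \mathrm{id}_X$. Thus $\tau$ realizes $\mathrm{Aut}(X,[D])$ abstractly as a subgroup of the linear algebraic group $\mathrm{PGL}\bigl(H^0(X,\mathcal{O}_X(mD))\bigr)$.

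The main obstacle is passing from an injective homomorphism of abstract groups to a closed immersion of algebraic groups. For this I would invoke the classical theorem of Matsumura--Oort that the automorphism functor of a projective variety is represented by a group scheme locally of finite type over $\mathbb{C}$, then realize $\mathrm{Aut}(X,[D])$ as the scheme-theoretic stabilizer of the class $[D]$ under the natural action of $\mathrm{Aut}(X)$ on the Picard scheme of $X$, so that it inherits the structure of a closed subgroup scheme locally of finite type. The induced action on $H^0(X,\mathcal{O}_X(mD))$ is algebraic by the base-change functoriality of higher direct images, so $\tau$ is a morphism of algebraic groups. Its image is then automatically Zariski closed, and combined with injectivity this identifies $\mathrm{Aut}(X,[D])$ with a closed subgroup of $\mathrm{PGL}$, which is therefore a linear algebraic group.
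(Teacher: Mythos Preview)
The paper itself does not prove this lemma; it is quoted from \cite[Lemma~2.3]{Br22} and used as a black box. So there is nothing in the paper to compare your argument against, and the question is simply whether your proof stands on its own.

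The strategy is the natural one, and the first two paragraphs are fine: bigness of $D$ yields a birational $\phi_m$, and the equivariance argument shows that $\tau$ is injective on $\mathbb{C}$-points. The gap is in the last paragraph. You correctly observe that Matsumura--Oort together with the stabilizer construction endows $\mathrm{Aut}(X,[D])$ with the structure of a group scheme \emph{locally} of finite type. You then assert that the image of $\tau$ is ``automatically Zariski closed''. That closure statement is a theorem about homomorphisms between group schemes \emph{of finite type}; it fails when the source is merely locally of finite type. For a concrete counterexample, the constant group scheme $\mathbb{Z}$ over $\mathbb{C}$ admits an injective homomorphism of group schemes into $\mathbb{G}_a$ (sending $n\mapsto n$), and the image is not Zariski closed. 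Thus your argument, as written, does not exclude the possibility that $\mathrm{Aut}(X,[D])$ has infinitely many connected components, and without that you cannot identify it with a closed subgroup of $\mathrm{PGL}$.

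Ruling this out is exactly where the substance of the lemma lies: one must show that $\mathrm{Aut}(X,[D])$ is of finite type (equivalently, affine, since an affine scheme locally of finite type over a field is quasi-compact and hence of finite type). This requires an input beyond the injectivity of $\tau$ --- for instance an argument via the $\mathbb{G}_m$-central extension acting on the section ring, or a properness argument for the relevant parameter scheme. You should consult Brion's proof for a way to close this gap; once finite type is established, your embedding into $\mathrm{PGL}$ finishes the job.
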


\begin{remark}
Let $\mathrm{Aut}(Y, [D]_{\mathbb{Q}})$ be the group of all automorphisms of $Y$ which preserve the class of $\mathbb{Q}$-linear equivalence of a $\mathbb{Q}$-divisor $D$. Then $\mathrm{Aut}(Y, [D]_{\mathbb{Q}})$ may not be an algebraic group even if $D$ is a big integral divisor. For instance, if $P$ is a point on an elliptic curve $Y$, then, up to a finite index, the group $\mathrm{Aut}(Y, [P]_{\mathbb{Q}})$ coincides with the group of torsion points in $\mathrm{Pic}^0(Y)$. 
\end{remark}

Using Lemma \ref{lem-brion}, we deduce

\begin{lemma}
\label{lem-linear-alg-group}
Let $Y$ be a normal projective variety, and $\Delta$ be a $\QQ$-divisor 
such that~\mbox{$K_Y+\Delta$} is a big $\mathbb{Q}$-Cartier $\mathbb{Q}$-divisor. Then the group $\mathrm{Aut}(Y, \Delta)$ is a linear algebraic group. 
\end{lemma}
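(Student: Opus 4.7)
The plan is to exhibit $\mathrm{Aut}(Y,\Delta)$ as a closed subgroup of a group of the form $\mathrm{Aut}(Y,[D])$ for some big integral Cartier divisor $D$, and then invoke Lemma \ref{lem-brion}. The warning in the remark preceding the statement tells us that we cannot afford to work with $\mathbb{Q}$-linear equivalence, so the key is to produce an honest integral Cartier representative that is preserved up to ordinary linear equivalence by every element of $\mathrm{Aut}(Y,\Delta)$.

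First I would choose a positive integer $N$ such that $N\Delta$ has integer coefficients and such that $D := N(K_Y + \Delta)$ is an integral Cartier divisor; this is possible because $K_Y + \Delta$ is $\mathbb{Q}$-Cartier, and $D$ remains big since $K_Y+\Delta$ is big. Next I would verify that for every $g\in \mathrm{Aut}(Y,\Delta)$ one has $g^*D\sim D$. Indeed, by definition of $\mathrm{Aut}(Y,\Delta)$ we have $g^*\Delta = \Delta$ as Weil $\mathbb{Q}$-divisors, and for any choice of canonical divisor one has $g^*K_Y\sim K_Y$ simply because $g$ is a biregular automorphism and pullback of a canonical divisor along an automorphism is a canonical divisor. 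Adding these and multiplying by $N$ gives
\[
g^*D \;=\; N g^*(K_Y+\Delta) \;=\; N(g^*K_Y + \Delta) \;\sim\; N(K_Y+\Delta) \;=\; D,
\]
so $g$ preserves the linear equivalence class $[D]$.

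This shows the inclusion $\mathrm{Aut}(Y,\Delta)\subseteq \mathrm{Aut}(Y,[D])$, and by Lemma \ref{lem-brion} the group on the right is a linear algebraic group. It then suffices to check that $\mathrm{Aut}(Y,\Delta)$ is a closed subgroup of $\mathrm{Aut}(Y,[D])$. Writing $\Delta = \sum a_i \Delta_i$ with the $\Delta_i$ prime, the condition $g^*\Delta = \Delta$ is the condition that, for each value $a$ of the coefficient, the reduced subscheme $\sum_{a_i=a}\Delta_i$ is mapped to itself. Being the stabilizer of a finite collection of closed subschemes under the algebraic action of $\mathrm{Aut}(Y,[D])$ on $Y$, the subgroup $\mathrm{Aut}(Y,\Delta)$ is closed, hence a linear algebraic group itself.

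The only real subtlety, as flagged by the preceding remark, is to make sure we never have to use $\mathbb{Q}$-linear equivalence of a big divisor; this is handled by the fact that automorphisms preserve the canonical class up to ordinary linear equivalence, so the integral Cartier divisor $N(K_Y+\Delta)$ is preserved up to ordinary linear equivalence as soon as the Weil $\mathbb{Q}$-divisor $\Delta$ itself is preserved. No other step presents any essential difficulty.
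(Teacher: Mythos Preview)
Your proof is correct and follows essentially the same approach as the paper: both take $D=N(K_Y+\Delta)$ for suitable $N$, use that automorphisms preserve $K_Y$ up to linear equivalence to embed $\mathrm{Aut}(Y,\Delta)$ into $\mathrm{Aut}(Y,[D])$, apply Lemma~\ref{lem-brion}, and conclude by noting that $\mathrm{Aut}(Y,\Delta)$ is closed in this linear algebraic group. The paper phrases the middle step as the equality $\mathrm{Aut}(Y,[N(K_Y+\Delta)])=\mathrm{Aut}(Y,[N\Delta])$, but this is only a cosmetic difference from your direct verification of the inclusion.
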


\begin{proof}
Let $N$ be a positive integer such that $N\Delta$ is integral.  
By Lemma \ref{lem-brion} the group $\mathrm{Aut}(Y,[N(K_Y+\Delta)])$ is a linear algebraic group. 
Observe that the classes $K_Y$ and $NK_Y$ are preserved by all automorphisms of $Y$. Hence we have 
\[
\mathrm{Aut}(Y,[N(K_Y+\Delta)])=\mathrm{Aut}(Y,[N\Delta]).
\]
Since 
$$
\mathrm{Aut}(Y, N\Delta)=\mathrm{Aut}(Y, \Delta)
$$ 
is a closed subset of a linear algebraic group $\mathrm{Aut}(Y,[N\Delta])$, it is a linear algebraic group itself.
\end{proof}

The next proposition is just 
\cite[Lemma~2.1]{FeiHu} adapted for our purposes;
see also~\mbox{\cite[Theorem~1.1]{FeiHu}} for further results of the same kind. 
We refer the reader to~\mbox{\cite[Theorem 1.2]{FG14}} for a stronger statement in the case of lc pairs. 

\begin{proposition}
\label{prop-group-preserves-pair}
Let $Y$ be an irreducible projective variety, and $\Delta$ be a $\mathbb{Q}$-divisor such that $(Y, \Delta)$ is an lc sub-pair. Assume that $K_Y+\Delta$ is a big $\QQ$-divisor. 
Then the group $G=\mathrm{Aut}(Y, \Delta)$ is finite. 
%Let $G\subset \Aut(Y)$ be a group which preserves the divisor $\Delta$. 
%Then $G$ is finite.
\end{proposition}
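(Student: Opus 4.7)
The plan is to reduce the statement to the known finiteness of automorphism groups of (honest) lc pairs of log general type by passing to an equivariant log resolution. First, Lemma~\ref{lem-linear-alg-group} already provides that $G=\mathrm{Aut}(Y,\Delta)$ is a linear algebraic group, so the issue is only to rule out a positive-dimensional connected component. I will achieve this by exhibiting $G$ as a subgroup of the automorphism group of a genuine lc pair with big log canonical divisor on a smooth birational model.

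The next step is to choose a $G$-equivariant log resolution $\pi\colon W\to Y$ of the sub-pair $(Y,\Delta)$. Such a resolution exists because $G$ is an algebraic subgroup of $\mathrm{Aut}(Y)$, so the Hironaka-type resolution procedure can be run equivariantly with respect to the $G$-action. Define $\Delta_W$ by the crepant relation
\[
K_W+\Delta_W=\pi^*(K_Y+\Delta),
\]
so that $(W,\Delta_W)$ is again an lc sub-pair whose support is simple normal crossing. Now write $\Delta_W=\Delta_W^+-\Delta_W^-$ as the difference of two effective divisors with disjoint supports, built from the positive and negative coefficients of $\Delta_W$ respectively. Since the coefficients of $\Delta_W$ lie in $(-\infty,1]$, the coefficients of $\Delta_W^+$ lie in $[0,1]$, and its support is snc, so $(W,\Delta_W^+)$ is an honest lc pair.

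The divisor
\[
K_W+\Delta_W^+=\pi^*(K_Y+\Delta)+\Delta_W^-
\]
is then the sum of the pullback of the big divisor $K_Y+\Delta$ by a birational morphism and an effective divisor, hence is big. The group $G$ acts biregularly on $W$ via the equivariance of $\pi$, and it preserves both $K_W$ and $\pi^*(K_Y+\Delta)$, so it preserves $\Delta_W$; by the uniqueness of the positive/negative decomposition it also preserves $\Delta_W^+$. Thus $G$ embeds into $\mathrm{Aut}(W,\Delta_W^+)$, which is finite by \cite[Theorem~1.2]{FG14}, since $(W,\Delta_W^+)$ is an lc pair whose log canonical divisor is big.

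The most delicate point is the appeal to equivariant log resolution: one needs the $G$-action on $Y$ to lift to a biregular (not merely birational) action on $W$ that simultaneously resolves the singularities of $Y$ and makes the support of $\Delta_W$ simple normal crossing. Once this is granted the rest is a short formal manipulation, and the heavy machinery is hidden in the cited lc pair finiteness result of Fujino--Gongyo; an alternative, more self-contained route would attempt to adapt the Fei--Hu argument directly to sub-pairs, avoiding the passage to a resolution, but the log resolution trick seems to me the most transparent reduction.
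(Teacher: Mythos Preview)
Your proof is correct, and in fact the reduction step is identical to the paper's: both take a $G$-equivariant log resolution (the paper cites \cite[Theorem~13.2]{BM} for this), write the crepant pullback $\widetilde{\Delta}=B-E$ as a difference of effective divisors, observe that $(W,B)$ is an honest lc pair with $K_W+B$ big, and note that $G$ acts on $W$ preserving $B$.

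The only divergence is at the endgame. You close by invoking \cite[Theorem~1.2]{FG14} as a black box for the finiteness of $\mathrm{Aut}(W,\Delta_W^+)$. The paper instead gives a self-contained argument following \cite{FeiHu}: after passing to a finite-index subgroup fixing each component of $B$, one supposes $G^0$ is nontrivial, extracts a $\mathbb{C}^\times$ or $\mathbb{C}^+$ subgroup, covers the resolution by the resulting rational orbit-curves $C$, and applies \cite[Lemma~5.11]{KeelMcKernan} to get $(K_{\widetilde Y}+B)\cdot C\le (K_{\widetilde Y}+\lceil B\rceil)\cdot C\le 0$, contradicting bigness. Your route is shorter but leans on the substantial Fujino--Gongyo machinery; the paper's route is more elementary and keeps the argument internal. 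Your closing remark that an alternative would be to ``adapt the Fei--Hu argument directly to sub-pairs'' slightly misreads the situation: the paper also first reduces to an honest lc pair via exactly your log-resolution trick, and only then runs the Fei--Hu style argument on that pair.
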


\begin{proof}
%Since $G\subset \mathrm{Aut}(Y,\Delta)$, we may assume that $G=\mathrm{Aut}(Y, \Delta)$. 
By Lemma \ref{lem-linear-alg-group}, we see that $G$ is a linear algebraic group. 
Let $(\widetilde{Y}, \widetilde{\Delta})$ be the log pull-back of $(Y, \Delta)$ via a $G$-equivariant log resolution $f\colon \widetilde{Y}\to (Y,\Delta)$, see 
for instance~\mbox{\cite[Theorem 13.2]{BM}}. 
Put 
$$
\widetilde{\Delta}=B-E,
$$ 
where $B$ and $E$ are effective $\mathbb{Q}$-divisors. 
Thus we have
\[
K_{\widetilde{Y}} + B = f^*(K_Y+\Delta)+E.
\]
Since $K_Y+\Delta$ is big, we see that $f^*(K_Y+\Delta)$ is big, so $K_{\widetilde{Y}} + B$ is big as the sum of a big $\mathbb{Q}$-divisor and an effective $\mathbb{Q}$-divisor. Also note that the pair $(\widetilde{Y}, B)$ is lc.
% (cf. Lazarsfeld 2.2.26). 

Passing to a finite index subgroup of $G$, we may assume that the group~$G$ preserves the pair $(\widetilde{Y}, B)$ and all the irreducible components of the divisor $B$.  
Let $G^0$ be the connected component of identity in $G$. 
To prove that $G$ is finite, it is 
enough to check that $G^0$ is trivial. 

Suppose that it is non-trivial.
It is well known that in this case $G^0$ contains a subgroup $F$ isomorphic either to the multiplicative group 
$\CC^\times$, or to the additive group $\CC^+$, see for 
instance~\mbox{\cite[Lemma~7.2]{ChenShramov}}. 
The closures of general orbits of $F$ are rational curves which cover a Zariski open subset of $Y$.
Let $C$ be a general curve in this family. Then 
$$
BC\le \lceil B\rceil C,
$$
because $C$ is not contained in the support of $B$.
Furthermore, $C$ meets~$\lceil B\rceil$
at most two times; that is, the degree of the support of the pull-back of $\lceil B\rceil$
to the normalization of $C$ is at most~$2$. 
Hence, by~\mbox{\cite[Lemma~5.11]{KeelMcKernan}} one has 
\[
(K_{\widetilde{Y}}+B)C\le (K_{\widetilde{Y}}+\lceil B\rceil)C\le 0,
\]
which gives a contradiction, because $K_{\widetilde{Y}}+B$ is big.
Therefore, $G^0$ is trivial, and $G$ is finite. 
\end{proof}

\begin{remark}
The assumption that $(X, \Delta)$ is lc is neccessary for 
Proposition~\ref{prop-group-preserves-pair} to hold. %Indeed, the pair $(\mathbb{P}^1, 3P)$ admits an infinite group of automorphisms. 
Indeed, the pair $(\PP^2, L_1+L_2+L_3+L_4)$, where 
$L_1$, \ldots, $L_4$ are four lines sharing a common point, admits an infinite group of automorphisms. 
\end{remark}

We also need the following
auxiliary result.

\begin{proposition}\label{proposition:restriction-to-a-curve}
Let $Y$ be a smooth projective variety of dimension $m\ge 2$,
and let~$\Theta$ be a non-trivial  
divisor on $Y$.
Then there exists a very ample divisor $L$   
on~$Y$, such that a general member $R$ of the linear system $|L|$ is smooth and
irreducible, and the divisor $\Theta$ 
restricts to a non-trivial divisor on $R$.
\end{proposition}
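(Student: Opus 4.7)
The plan is to take $L$ to be any very ample divisor on $Y$ and to verify via Bertini's theorem that a general $R \in |L|$ satisfies all three requirements. Write the non-zero divisor $\Theta$ as $\Theta = \sum_i a_i \Theta_i$ with the $\Theta_i$ pairwise distinct prime divisors and $a_i \in \QQ \setminus \{0\}$.

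First I would apply Bertini's theorem: since $L$ is very ample and $\dim Y = m \geq 2$, a general $R \in |L|$ is smooth and irreducible, hence a reduced prime divisor. Since $R$ is prime of dimension $m - 1$ and each $\Theta_i$ is also prime of dimension $m - 1$, the inclusion $\Theta_i \subseteq R$ forces $R = \Theta_i$, which accounts for at most one element of $|L|$ per index $i$. Consequently, a general $R$ contains none of the $\Theta_i$, and each restriction $\Theta_i|_R$ is a well-defined effective divisor on the smooth variety $R$.

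The main obstacle is to rule out cancellation in the expression $\Theta|_R = \sum_i a_i \Theta_i|_R$ when the $a_i$ have mixed signs. I would handle this by showing that for general $R$ the supports of $\Theta_i|_R$ and $\Theta_j|_R$ are disjoint whenever $i \neq j$. For $m \geq 3$, any common component of these two divisors has dimension $m-2$ and must be contained in $\Theta_i \cap \Theta_j \cap R$, which for a general hyperplane section $R$ in the projective embedding defined by $|L|$ has dimension $m - 3$ by the standard dimension theorem, yielding a contradiction. For $m = 2$, the variety $R$ is a smooth irreducible curve, $\Theta_i \cap \Theta_j$ is a finite set of points, and a general member of the base-point-free linear system $|L|$ avoids any fixed finite set.

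Finally, since $L$ is very ample and each $\Theta_i$ is a non-zero effective prime divisor, the intersection $R \cap \Theta_i$ is non-empty for a general $R$ not containing $\Theta_i$, so every $\Theta_i|_R$ is a non-zero effective divisor on $R$. Combined with the disjointness of supports, this forces every component of each $\Theta_i|_R$ to appear in $\Theta|_R = \sum_i a_i \Theta_i|_R$ with its own nonzero coefficient $a_i$; in particular $\Theta|_R \neq 0$, proving the proposition.
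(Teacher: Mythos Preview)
Your argument proves a different and easier statement than the one intended. In this paper ``non-trivial divisor'' means non-trivial in $\Pic$: the hypothesis is $\Theta\not\sim 0$, and the required conclusion is that $\Theta|_R\not\sim 0$ in $\Pic(R)$. This is explicit in the paper's own proof, which begins by assuming ``the divisor $\Theta_R\in\Pic(R)$ is trivial'', and it is exactly what is needed in the application (Corollary~\ref{corollary:restriction-to-a-curve} is invoked in the proof of Proposition~\ref{prop:canonical-bundle-formula} to pass from $N'(\widetilde\Delta-\Delta_1-M_1)\not\sim 0$ on $Y$ to the same non-vanishing on a curve $C$). What you establish is only that the restricted \emph{cycle} $\Theta|_R=\sum_i a_i\,\Theta_i|_R$ is non-zero; this says nothing about its linear equivalence class, since a non-zero divisor can perfectly well be linearly equivalent to zero.

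A symptom of this gap is that the proposition asserts only the \emph{existence} of a suitable very ample $L$, whereas you claim any very ample $L$ works. The paper's proof genuinely uses the freedom to choose $L$: one takes $L$ to be a sufficiently high multiple of an ample divisor so that, via Serre duality and Serre vanishing, $h^1(Y,\Theta-L)=0$. If $\Theta|_R$ were trivial in $\Pic(R)$, the restriction exact sequence then forces $h^0(Y,\Theta)>0$; as $\Theta\not\sim 0$, it is linearly equivalent to a non-zero effective divisor, so $\Theta|_R\cdot L^{m-2}=\Theta\cdot L^{m-1}>0$, contradicting the assumed triviality of $\Theta|_R$. The cohomological vanishing step, and hence the specific choice of $L$, is essential to this argument.
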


\begin{proof}
Let $L$ be an arbitrary divisor on $Y$. Then   
\[
h^1(Y, \Theta-L)=h^{m-1}(Y, K_Y-\Theta+L)
\]
by Serre duality. Furthermore, we have 
\begin{equation}\label{eq:h1-vanishing}
h^{m-1}(Y, K_Y-\Theta+L)=0
\end{equation}
provided that $L$ is a sufficiently high multiple 
of an ample divisor by Serre vanishing. Let $L$ be a very ample 
divisor with this property, and let $R$ be a general member of 
the linear system $|L|$. Then $R$ is smooth and irreducible by Bertini theorem. 

Put $\Theta_R=\Theta\vert_R$.
Assume that the divisor $\Theta_R\in\Pic(R)$ is trivial. 
Then 
$$
h^0(R, \Theta_R)=1.
$$
On the other hand, from 
the restriction exact sequence we obtain an exact sequence 
$$
H^0(Y,\Theta)\to H^0(R, \Theta_R)\to H^1(Y, \Theta-L).
$$
By~\eqref{eq:h1-vanishing}, the first homomorphism in this sequence 
is surjective, and hence
$$
h^0(Y,\Theta)>0.
$$
This means that $\Theta$ is linearly equivalent to an effective 
divisor. Since $\Theta$ is non-trivial, we conclude that 
$$
\Theta_R\cdot L^{m-2}=\Theta\cdot L^{m-1}>0,
$$
which implies that $\Theta_R$ is non-tirivial.
The obtained contradiction completes the proof of the proposition.
\end{proof}

\begin{corollary}\label{corollary:restriction-to-a-curve}
Let $Y$ be a smooth projective variety of dimension $m\ge 2$,
and let~$\Theta$ be a non-trivial 
divisor on $Y$.
Then there exists a smooth irreducible curve $C\subset Y$ 
which is an intersection of general very ample divisors such that 
$\Theta$ restricts to a non-trivial divisor on $C$.
\end{corollary}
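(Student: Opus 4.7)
The plan is to deduce the corollary from Proposition~\ref{proposition:restriction-to-a-curve} by induction on the dimension $m = \dim Y$. The base case $m = 2$ is immediate: the proposition directly produces a very ample divisor $L$ on $Y$ whose general member $C \in |L|$ is a smooth irreducible curve with $\Theta|_C$ non-trivial, which is the required intersection of a single general very ample divisor.

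For the inductive step with $m \geq 3$, I would first apply Proposition~\ref{proposition:restriction-to-a-curve} to $(Y, \Theta)$ to obtain a very ample divisor $L_1$ on $Y$ and a smooth irreducible general member $R \in |L_1|$ with $\Theta|_R$ non-trivial. Since $R$ is smooth projective of dimension $m - 1 \geq 2$ and carries the non-trivial divisor $\Theta|_R$, the inductive hypothesis applied to $(R, \Theta|_R)$ produces a smooth irreducible curve $C \subset R$ obtained as an intersection of general very ample divisors on $R$ with $\Theta|_C = (\Theta|_R)|_C$ non-trivial. Then $C \subset Y$ is the sought smooth irreducible curve, obtained by intersecting $L_1$ with further very ample divisors that cut out $C$ inside $R$.

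The main technical point, and the step I expect to require the most care, is to ensure that all the very ample divisors used in the iteration can be realized as restrictions of very ample divisors on $Y$, so that $C$ is genuinely cut out by general very ample divisors on the ambient variety. The plan is to handle this by fixing a single very ample divisor $H$ on $Y$ and working throughout with a sufficiently high multiple $L = NH$: for $N$ large enough (depending only on $Y$, $\Theta$, and $H$), Serre vanishing supplies both the surjectivity of the restriction maps $H^0(Y, NH) \to H^0(Y_i, NH|_{Y_i})$ on each intermediate intersection $Y_i = L_1 \cap \cdots \cap L_i$, and the vanishing of $H^1(Y_i, \Theta|_{Y_i} - NH|_{Y_i})$ required to invoke the argument of Proposition~\ref{proposition:restriction-to-a-curve} on each $Y_i$. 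With this uniform choice of $N$, general members $L_1, \ldots, L_{m-1} \in |NH|$ intersect in a smooth irreducible curve $C$ on which $\Theta$ restricts non-trivially, completing the proof.
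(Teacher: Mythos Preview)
Your inductive approach is correct and is exactly what the paper does: its proof is simply the assertion that Proposition~\ref{proposition:restriction-to-a-curve} can be applied $m-1$ times, producing very ample classes $L_1,\ldots,L_{m-1}$ on $Y$ and general members $R_i\in|L_i|$ whose intersection $C=R_1\cap\cdots\cap R_{m-1}$ is the required curve.

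One small slip in your third paragraph: the surjectivity of $H^0(Y,NH)\to H^0(Y_i,NH|_{Y_i})$ does \emph{not} follow from Serre vanishing when all the $L_j$ lie in the \emph{same} system $|NH|$ --- already for $i=1$ the obstruction is $H^1(Y,\oo_Y)$, which need not vanish. This is harmless, however: the argument of Proposition~\ref{proposition:restriction-to-a-curve} on $Y_i$ only needs the next section $Y_{i+1}\subset Y_i$ to be smooth, irreducible, and a member of the class $NH|_{Y_i}$ (Bertini on the base-point-free subsystem coming from $|NH|$ gives this), together with the vanishing of $H^1(Y_i,(\Theta-NH)|_{Y_i})$. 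The paper avoids the issue altogether by allowing the $L_i$ to be distinct and chosen iteratively, so that at step $i$ one may take $L_{i+1}$ sufficiently ample relative to the already-fixed $Y_i$.
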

\begin{proof}
Applying Proposition~\ref{proposition:restriction-to-a-curve}
several times, we obtain very ample divisors $L_i$
and general divisors $R_i\in |L_i|$, $1\le i\le m-1$, such that the intersection 
$$
C=R_1\cap\ldots\cap R_{m-1}
$$
is a smooth irreducible curve, and $\Theta$ restricts to a non-trivial 
divisor on $C$. 
\end{proof}

We will use the following obvious remark without explicit reference. 

\begin{remark}
\label{rem-restriction-of-two-divisors}
Let $Y$ be a smooth projective variety of dimension $m\ge 2$,
and let~$\Theta$ and $\Theta'$ be two $\mathbb{Q}$-divisors on $Y$. Assume that for a general curve
\[
C=R_1\cap\ldots\cap R_{m-1},
\]
where $R_i$ are very ample divisors on $Y$, the $\QQ$-divisors $\Theta|_C$ and $\Theta'|_C$ 
coincide. Then~\mbox{$\Theta=\Theta'$}, i.e. $\Theta$ and $\Theta'$ coincide 
as $\QQ$-divisors on $Y$. 
\end{remark}

\section{Canonical bundle formula}
\label{sec-canonical-bundle}
In this section, we establish a version of the canonical bundle formula in relative dimension $1$  which works for proper morphisms from a complex variety to a normal projective variety, see Proposition \ref{prop:canonical-bundle-formula}. 
Our exposition goes in parallel with the case of projective morphisms between projective varieties, see e.g. \cite{PS09}. 
In fact, elementary properties of the discriminant and moduli $\mathbb{Q}$-divisors defined below are proven in the same way as in the projective case. 
The proof of Proposition \ref{prop:canonical-bundle-formula} relies on the reduction to the case of a relatively minimal elliptic fibration from a compact complex surface to a curve, in which case we apply a version of the famous Kodaira formula, see Theorem~\ref{theorem:Kodaira}. 
We also formulate a version of the conjecture of Prokhorov and Shokurov 
\cite[Conjecture 7.13]{PS09} in the analytic setting, see Conjecture~\ref{conj-analytic-PS}.
We prove it in the case when the relative dimension of the fibration is $1$ in 
Proposition~\ref{prop-proof-of-conjecture}.

\begin{definition}
Let $X$ be a normal compact complex variety with a 
contraction~\mbox{$\sigma\colon X\to Y$} over a smooth
projective variety $Y$. 
(Note that we do not assume $\sigma$ to be a projective morphism!)  
Suppose that there exists a $\mathbb{Q}$-divisor $R$ on $X$ such that 
\begin{equation*}
K_X+R\sim_{\mathbb{Q}} 0/Y. 
\end{equation*}
%where $H$ is a $\QQ$-divisor on $Y$. 
If $(X,R)$ is an lc sub-pair, then we say that $\sigma\colon (X,R)\to Y$ is an \emph{lc-trivial contraction}. If moreover $\sigma$ is a fibration, we say that it is an \emph{lc-trivial fibration}.
\end{definition}

\begin{definition}
\label{defin-discriminant}
Let $\sigma\colon (X,R)\to Y$ be an lc-trivial contraction. We define the $\mathbb{Q}$-divisor $\Delta=\Delta(\sigma,R)$ on $Y$ by the formula
\begin{equation}
\label{eq-defin-discriminant-divisor}
\Delta=\sum\limits_{D\subset Y} (1-\lct(Y,R;\sigma^*D)) D, 
\end{equation}
where $D$ runs through the set of all prime divisors on $Y$, and the log-canonical threshold is computed over the typical point of $D$. Then $\Delta$ is called the \emph{discriminant $\mathbb{Q}$-divisor} of $\sigma$. We write $\Delta=\Delta(\sigma, R)$ 
to emphasize that $\Delta$ depends on $\sigma$ and $R$. In the case when $R=0$ we simply write $\Delta=\Delta(\sigma)$. 
Furthermore, there exists a $\mathbb{Q}$-divisor $M$ on $Y$, called the \emph{moduli $\mathbb{Q}$-divisor} of $\sigma$, such that
\begin{equation}
\label{eq-defin-discriminant-and-moduli-divisor}
K_X + R \sim_{\mathbb{Q}} \sigma^*(K_Y + \Delta + M),
\end{equation}
cf. \cite[Construction 7.5]{PS09}.
\end{definition}

Note that the discriminant $\mathbb{Q}$-divisor $\Delta$ is defined as a $\QQ$-divisor on~$Y$, and it is effective if $R$ is effective. On the other hand, the moduli $\mathbb{Q}$-divisor $M$ is defined only up to $\QQ$-linear equivalence. 

\begin{remark}
\label{rem-semi-additivity}
Suppose that $\sigma\colon (X, R)\to Y$ is an lc-trivial contraction over~$Y$ with discriminant $\mathbb{Q}$-divisor $\Delta$ and moduli $\mathbb{Q}$-divisor $M$, and 
$B$ is a $\QQ$-divisor on~$Y$ such that $(X, R+\sigma^*B)$ is an lc subpair. Then 
$$
\sigma\colon (X, R+\sigma^*B)\to Y
$$ 
is an lc-trivial contraction with discriminant $\mathbb{Q}$-divisor $\Delta + B$ and moduli $\mathbb{Q}$-divisor $M$. This property of the discriminant $\mathbb{Q}$-divisor is called the \emph{semi-additivity property}, cf.~\mbox{\cite[Lemma 7.4(ii)]{PS09}}.
\end{remark}

\begin{remark}
\label{remark:basechange-property}
Let $\sigma\colon (X, R)\to Y$ be an lc-trivial contraction over $Y$, and~
\mbox{$\psi\colon Y'\to Y$} be a generically finite morphism from a normal projective variety $Y'$. Let $X'$ be the normalization of the main component of the fiber product~\mbox{$X\times_Y Y'$}. Then $X'$ fits into the commutative diagram 
\begin{equation*}
\begin{tikzcd}
X' \arrow{d}{\sigma'} \arrow{r}{\phi}  & X \arrow{d}{\sigma} \\
Y' \arrow{r}{\psi} & Y  
\end{tikzcd}
\end{equation*}
We define a $\QQ$-divisor $R'$ on $X'$ by the formula
\[
K_{X'}+R' \sim_{\QQ} \phi^*(K_X + R). 
\]
Then $\sigma'\colon (X',R')\to Y'$ is also an lc-trivial contraction. 
If $\psi$ is birational, then~\mbox{$\phi_*M_{Y'} = M_Y$} and~\mbox{$\phi_*\Delta_{Y'} = \Delta_Y$}, so these collections of data form b-divisors, see for instance \cite[Section 1.2]{Am04} for the proof in the case when $\sigma$ is a morphism of projective varieties. However, the argument of \cite{Am04}
works in our setting as well. This property of the discriminant $\mathbb{Q}$-divisor is called the \emph{base change property}.
\end{remark}

Next, we make an observation on the behavior of the discriminant $\mathbb{Q}$-divisor under restriction 
to a hyperplane section. 

\begin{lemma}[{cf. \cite[Remark 7.3(i)]{PS09}}]
\label{lem-delta-restricted}
Let $X$ be a compact complex manifold  with a 
contraction $\sigma\colon X\to Y$ 
over a smooth projective variety $Y$.  
Let $R$ be a $\sigma$-vertical $\mathbb{Q}$-divisor on $X$. 
Let $\Delta=\Delta(\sigma,R)$ be the discriminant $\mathbb{Q}$-divisor of $\sigma$. 
Let~$H$ be a general hyperplane section of $Y$, and let $V=\sigma^{-1}(H)$. Put $R_V=R|_V$. 
Let~\mbox{$\Delta_H=\Delta(\sigma_V,R_V)$} be the discriminant $\mathbb{Q}$-divisor of the contraction 
$$
\sigma_V=\sigma|_V\colon V\to H.
$$
Then $\Delta|_H=\Delta_H$. 
\end{lemma}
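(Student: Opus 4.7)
The plan is to reduce the equality $\Delta|_H = \Delta_H$ to a coefficient-by-coefficient comparison at typical points of prime divisors on $H$, and then to verify each such comparison using log resolution and adjunction. By the definition in~\eqref{eq-defin-discriminant-divisor}, both $\Delta$ and $\Delta_H$ are controlled, divisor by divisor, by log canonical thresholds computed at typical codimension-one points. Only finitely many prime divisors $D\subset Y$ contribute a non-zero coefficient to $\Delta$: those over which $\sigma$ has a non-typical fiber, together with those contained in the image of $\Supp(R)$. For a general very ample hyperplane section $H\subset Y$, Bertini's theorem ensures that $H$ is smooth and irreducible, that each such $D$ meets $H$ transversally, that $D\cap H$ is either a prime divisor on $H$ (when $\dim Y\geq 3$) or a finite union of reduced points treated as prime divisors on the smooth curve $H$ (when $\dim Y=2$), and that every typical point of a prime component $D'$ of $D\cap H$ is also a typical point of $D$. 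Moreover, no further prime divisor of $H$ can contribute to $\Delta_H$, since over the complement of the above-mentioned finite union of $D$'s the fibers of $\sigma$---and hence of $\sigma_V$---are typical. So the equality $\Delta|_H=\Delta_H$ reduces to showing, for each such $D$ and each prime component $D'$ of $D\cap H$, the identity
\[
\lct_{\eta_{D'}}\!\bigl(V, R_V;\, \sigma_V^* D'\bigr) \;=\; \lct_{\eta_D}\!\bigl(X, R;\, \sigma^* D\bigr),
\]
where $\eta_D$ and $\eta_{D'}$ denote typical points.

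Next, I would verify this identity by a log-resolution and adjunction argument. Let $f\colon\tilde X\to X$ be a log resolution of $(X,R+\sigma^* D)$, available in the analytic setting since $X$ is a complex manifold. Writing
\[
K_{\tilde X} + \tilde R + \lambda f^*\sigma^*D \;=\; f^*(K_X + R + \lambda \sigma^* D),
\]
the quantity $\lct_{\eta_D}(X,R;\sigma^* D)$ is the supremum of those $\lambda\in\mathbb{Q}$ for which every prime divisor of $\tilde X$ dominating $D$ has coefficient at most $1$ in $\tilde R+\lambda f^*\sigma^* D$. Applying Bertini to the base-point-free linear subsystem of $|f^*\sigma^*L|$ on $\tilde X$ pulled back from the very ample system $|L|$ on $Y$ defining $H$, one obtains, for $H$ general, that the preimage $\tilde V:=f^{-1}(V)$ is smooth at typical points of every component dominating $D'$, meets $\Supp(\tilde R+f^*\sigma^* D)$ transversally there, and $f|_{\tilde V}\colon\tilde V\to V$ is a log resolution of $(V,R_V+\sigma_V^* D')$ over an open neighborhood of a typical point of $D'$. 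Adjunction applied to the smooth divisor $\tilde V\subset\tilde X$ gives
\[
\bigl(K_{\tilde X} + \tilde R + \lambda f^*\sigma^*D\bigr)\big|_{\tilde V}
\;=\; K_{\tilde V} + \tilde R|_{\tilde V} + \lambda (f|_{\tilde V})^* \sigma_V^* D',
\]
and each prime divisor of $\tilde X$ dominating $D$ restricts to a prime divisor of $\tilde V$ dominating $D'$ with the same coefficient. Hence the two suprema are defined by the same inequalities, and the lcts agree.

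The main obstacle I foresee is the Bertini-type verification that $\tilde V=f^{-1}(V)$ serves as a log resolution of $(V,R_V+\sigma_V^* D')$ at a typical point of $D'$. Since $\sigma$ is not assumed projective, one cannot appeal to a projective Bertini theorem on $X$ directly; however, the comparison is purely local near $\eta_{D'}$, so it suffices to apply Bertini to the base-point-free linear system on $\tilde X$ pulled back from the very ample system $|L|$ on the projective base $Y$. This yields the required smoothness and transversality of $\tilde V$ at typical points of its components over $D'$, after which the adjunction-theoretic part of the argument is a direct calculation.
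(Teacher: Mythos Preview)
Your approach is correct and amounts to unwinding the same adjunction mechanism that the paper invokes more succinctly. The paper's proof is a three-line argument: for general $H$ the pair $(X, R+\lambda\sigma^*Z)$ is lc over a typical point of a prime divisor $Z\subset Y$ if and only if $(X, R+V+\lambda\sigma^*Z)$ is lc over a typical point of $Z|_H$ (adding the pullback of a general hyperplane preserves lc-ness), and by inversion of adjunction this is equivalent to $(V, R_V+\lambda\sigma_V^*(Z|_H))$ being lc; hence the two log canonical thresholds agree. Your explicit log-resolution argument is essentially a hands-on proof of precisely this instance of inversion of adjunction: restricting a log resolution of $(X,R+\sigma^*D)$ to the general slice $\tilde V$ and applying ordinary adjunction for the smooth divisor $\tilde V\subset\tilde X$ shows that the discrepancy vectors coincide. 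The paper's route is more economical because it treats inversion of adjunction as a black box; your route has the merit of being self-contained and of making explicit that the Bertini step takes place on the base-point-free system pulled back from the \emph{projective} base $Y$, which addresses the concern that $\sigma$ itself is not assumed projective.
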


\begin{proof}
Let $\lambda\in\mathbb{Q}_{\geq 0}$. Since $H$ is general, we see that the pair $(X, R + \lambda \sigma^*Z)$ is lc over a typical point of a prime divsor $Z\subset Y$ 
if and only if the pair $(X,  R + V + \lambda \sigma^*Z)$ is lc over a typical point of $Z|_H$. By inversion of adjunction, this is equivalent to the condition that 
the pair~\mbox{$(V,R_V + \lambda \sigma_V^* Z|_H)$} is lc. Thus,
\[
\mathrm{lct}(X, R; \sigma^*Z) = \mathrm{lct}(V, R_V;\sigma_V^*Z|_H).
\]
This implies that $\Delta|_H=\Delta_H$. 
\end{proof}

In this section, we will be mostly interested in elliptic fibrations. Let us start with a few observations on elliptic fibrations in dimension $2$. 

Let $S$ be a smooth compact complex surface with a relatively minimal
fibration~\mbox{$\sigma\colon S\to C$} over a curve $C$ whose typical fiber is an elliptic curve. 
According to Kodaira's classification (cf. \cite[\S\,V.11]{BHPVdV04}), a fiber $F$ of $\sigma$  can have one of the following types:
\begin{center}
\begin{longtable}{|p{0.14\textwidth}|p{0.13\textwidth}|p{0.09\textwidth}|p{0.09\textwidth}|p{0.10\textwidth}|p{0.22\textwidth}|}
\hline
Type & $I_n, n\geq 0$ & $II$ & $III$ & $IV$ & $\prescript{}{m}I_n$, $n\geq 0$, $m\geq 1$\\ \hline
$1-\mathrm{lct}(S,F)$ & $0$ & $1/6$ & $1/4$ & $1/3$ & $1-1/m$ \\ \hline
Monodromy & 
${\tiny\begin{pmatrix}
    1 & n \\
    0 & 1
\end{pmatrix}}$
& 
${\tiny\begin{pmatrix}
    1 & 1 \\
    -1 & 0
\end{pmatrix}}$
& 
${\tiny\begin{pmatrix}
    0 & 1 \\
    -1 & 0
\end{pmatrix}}$
& 
${\tiny\begin{pmatrix}
    0 & 1 \\
    -1 & -1
\end{pmatrix}}$
& 
\vphantom{
${\tiny\begin{pmatrix}
    -1 & -1 \\
    1 & 0
\end{pmatrix}}_j^{A}$
}
 \\
\hline
\hline 
Type & $I_n^*, n\geq 0$ & $II^*$ & $III^*$ & $IV^*$ & \\ \hline
$1-\mathrm{lct}(S,F)$ & $1/2$ & $5/6$ & $3/4$ & $2/3$ & \\ \hline
Monodromy & 
${\tiny\begin{pmatrix}
    -1 & -n \\
    0 & -1
\end{pmatrix}}$
& 
${\tiny\begin{pmatrix}
    0 & -1 \\
    1 & 1
\end{pmatrix}}$
& 
${\tiny\begin{pmatrix}
    0 & -1 \\
    1 & 0
\end{pmatrix}}$
& 
${\tiny\begin{pmatrix}
    -1 & -1 \\
    1 & 0
\end{pmatrix}}$
&
\vphantom{
${\tiny\begin{pmatrix}
    -1 & -1 \\
    1 & 0
\end{pmatrix}}_j^{A}$
}
\\ 
\hline
\caption{Fibers of a relatively minimal elliptic fibration}
\label{table:monodromy} 
\end{longtable}
\end{center}
Here, in the second row we indicate the coefficient of $\sigma(F)$ in the discriminant $\mathbb{Q}$-divisor $\Delta(\sigma)$ on $C$; in the third row we indicate the monodromy matrix which acts on the middle cohomology of a typical fiber. 
The multiple fibers have type 
$\prescript{}{m}I_n$ for~\mbox{$n\geq 0$} and $m\geq 1$. 

The following proposition is well known to experts, cf. \cite[2.6]{Fujita}. 

\begin{proposition}
\label{prop-monodromy-determines-everything}
Let $S$ be a smooth compact complex surface with a relatively minimal
fibration $\sigma\colon S\to C$ over a curve $C$ whose typical fiber is an elliptic curve.
For a non-multiple fiber $F$ of $\sigma$, the value $1-\mathrm{lct}(S,F)$ is uniquely determined by the conjugacy class of the monodromy matrix in $\mathrm{SL}_2(\mathbb{Z})$.
\end{proposition}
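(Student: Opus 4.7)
The plan is to verify the claim by a direct inspection of Table~\ref{table:monodromy}. By Kodaira's classification, every non-multiple singular fiber $F$ has one of the types listed there, and both the value $1-\mathrm{lct}(S,F)$ and the conjugacy class of the monodromy in $\mathrm{SL}_2(\mathbb{Z})$ are determined by that type. It therefore suffices to show that whenever two types in the table give rise to conjugate monodromy matrices, they also give the same value of $1-\mathrm{lct}$.

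First, I would observe that along the two infinite series $I_n$ and $I_n^*$ the value of $1-\mathrm{lct}$ is constant (equal to $0$ and $1/2$, respectively), so conjugacies within a single series are irrelevant. For cross-type comparisons I would use the trace of the monodromy as a conjugation invariant: from the table the traces are $2$ for $I_n$, $-2$ for $I_n^*$, $1$ for $II$ and $II^*$, $0$ for $III$ and $III^*$, and $-1$ for $IV$ and $IV^*$. This rules out conjugacy across different trace classes and reduces the problem to the three dual pairs $II/II^*$, $III/III^*$, and $IV/IV^*$.

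For each of these pairs, one first checks by direct multiplication that the two representative matrices in the table are inverses of each other. What remains is to show that $M$ and $M^{-1}$ are not conjugate in $\mathrm{SL}_2(\mathbb{Z})$ for $M \in \{M^{II}, M^{III}, M^{IV}\}$. I would handle this uniformly: writing $P = \begin{pmatrix} a & b \\ c & d \end{pmatrix}$ with $ad-bc=1$ and equating entries in $PM = M^{-1}P$ yields two linear relations that determine $c$ and $d$ in terms of $a$ and $b$; substituting into the determinant condition reduces to $a^2 - ab + b^2 = -1$ (in the cases of $II$ and $IV$) or $a^2 + b^2 = -1$ (in the case of $III$), neither of which has a real, let alone integer, solution.

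No serious obstacle is anticipated: the structural input is the well-known Kodaira classification together with trace invariance, and what remains is three elementary Diophantine contradictions. The only mild subtlety to keep in mind is that the rows $I_n$ and $I_n^*$ represent infinite families of pairwise non-conjugate monodromy matrices, but all with the same lct coefficient --- which is precisely the feature that makes the proposition hold despite this $n$-dependence.
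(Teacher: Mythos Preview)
Your proof is correct and follows essentially the same route as the paper's: both arguments separate the Kodaira fiber types by conjugation invariants of the monodromy (you use the trace alone, the paper also invokes unipotency and order) and then verify directly that the three dual pairs $II/II^*$, $III/III^*$, $IV/IV^*$ have non-conjugate monodromies in $\mathrm{SL}_2(\mathbb{Z})$. The paper leaves this last step as ``straightforward to check'', whereas you spell it out via the explicit Diophantine obstruction, which is a welcome addition.
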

\begin{proof}
The monodromy matrix for type $I_n$ is unipotent, and for type $I_n^*$ is quasi-unipotent but not unipotent; it has infinite order for $n\geq 1$.  
For the types $I_0^*$, $II$, $II^*$, $III$, $III^*$, $IV$ and $IV^*$, the matrices have finite order.

The monodromy matrices for type $I_0^*$ has trace $-2$, for types $II$ and $II^*$ have trace $1$, for types $III$ and $III^*$ have trace $0$, for types $IV$ and $IV^*$ have trace $-1$. 

Finally, it is straightforward to check that the monodromy matrices for types~$II$ and~$II^*$ (resp., $III$ and $III^*$; $IV$ and $IV^*$) are conjugate in $\mathrm{GL}(\mathbb{Z})$, but not in~$\mathrm{SL}(\mathbb{Z})$.  This shows that the type of a non-multiple fiber, and hence the value of~\mbox{$1-\mathrm{lct}(S,F)$}, is uniquely determined by the monodromy.
\end{proof}

Recall the following classical result.

\begin{theorem}[{see \cite[Theorem~8.2.1]{Kollar}, \cite[\S2]{Fujita}, \cite[Theorem 6.1]{Ue73},
or~\mbox{\cite[Theorem~12]{Kod64}}}]
\label{theorem:Kodaira}
Let $S$ be a smooth compact complex surface with a relatively minimal
fibration~\mbox{$\sigma\colon S\to C$} over a curve $C$ whose typical fiber is an elliptic curve. 
Let $\mu$ be the least common multiple of the multiplicities of all multiple fibers of $\sigma$. 
Let $j\colon C\to\PP^1$ be the map defined by the $j$-invariant 
of the fibers of $\sigma$. 
Then
\begin{equation}
\label{eq-fujita-ueno}
K_S\sim_{12\mu} \sigma^*\left(K_C+\Delta+M\right), 
\end{equation}
where $M\sim_{12}\frac{1}{12}j^*Q$ for a point $Q\in \PP^1$, and $\Delta=\Delta(\sigma)$ is the discriminant $\mathbb{Q}$-divisor.  
\end{theorem}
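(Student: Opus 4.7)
The plan is to combine Kodaira's classical canonical bundle formula for relatively minimal elliptic surfaces with a case-by-case analysis of the singular fibers using Table~\ref{table:monodromy}. First I would invoke Kodaira's formula in its standard form (see e.g.\ \cite[\S V.12]{BHPVdV04}):
\[
K_S\sim \sigma^{*}(K_C+L)+\sum_{i}(m_i-1)F_i,
\]
where $m_iF_i$ runs through the multiple fibers of $\sigma$ and $L$ is a line bundle on $C$ of degree $\chi(\oo_S)$. This is the starting point; everything else is bookkeeping of how the three terms $K_C$, $\Delta$, $M$ emerge from this expression.

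The next step is to rewrite the multiple-fiber part as a pullback from $C$. If $s_i=\sigma(F_i)$, then $F_i=\tfrac{1}{m_i}\sigma^{*}s_i$, so
\[
(m_i-1)F_i=\Bigl(1-\tfrac{1}{m_i}\Bigr)\sigma^{*}s_i.
\]
The coefficient $1-\tfrac{1}{m_i}$ is precisely the contribution of $s_i$ to the discriminant $\Delta(\sigma)$ prescribed by Definition~\ref{defin-discriminant}, since for a fiber of type ${}_{m_i}I_{n_i}$ one has $\lct(S,F)=\tfrac{1}{m_i}$ (as recorded in the second row of Table~\ref{table:monodromy}). So the multiple fibers on the right-hand side assemble exactly into the pullback of the multiple-fiber part of $\Delta$.

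The third step is to dispatch the line bundle $L$ using the topological Euler characteristic. By Noether's formula and $K_S^{2}=0$ for a relatively minimal elliptic fibration one has $12\chi(\oo_S)=e(S)$, and since smooth elliptic fibers have zero Euler number one gets $12\deg L=\sum_{s\in C}e(F_s)$. The key combinatorial identity to verify, case by case from Table~\ref{table:monodromy}, is
\[
e(F_s)=12\bigl(1-\lct(S,F_s)\bigr)+\nu_s(j),
\]
where $\nu_s(j)$ denotes the multiplicity of the $j$-invariant map $j\colon C\to\PP^{1}$ at $s$. For example, type $I_n$ has $e=n$, $1-\lct=0$, $\nu_s(j)=n$; type $I_n^{*}$ has $e=n+6$, $1-\lct=\tfrac{1}{2}$, $\nu_s(j)=n$; type $II$ has $e=2$, $1-\lct=\tfrac{1}{6}$, $\nu_s(j)=0$; and so on through the six remaining types. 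Summing over $s$ and using $\deg j=\sum_s\nu_s(j)$ yields the linear equivalence
\[
12L\sim 12\,\Delta^{\mathrm{ns}}+j^{*}Q
\]
on $C$, where $\Delta^{\mathrm{ns}}$ collects the contributions of the non-multiple singular fibers to $\Delta$ and $Q\in\PP^{1}$ is an arbitrary point.

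Finally, assembling these three ingredients and clearing denominators, I obtain $12\mu K_S\sim 12\mu\,\sigma^{*}(K_C+\Delta+M)$ with $12 M\sim j^{*}Q$; the factor $\mu=\mathrm{lcm}(m_i)$ appears only because of the denominators coming from the multiple fibers. The main obstacle is the fourth step, the numerical verification of $e(F_s)=12(1-\lct(S,F_s))+\nu_s(j)$ for each of the Kodaira types, but this is entirely mechanical once one has Table~\ref{table:monodromy} in hand, and in fact is classical; it can also be found worked out in \cite{Fujita} and \cite{Kod64}.
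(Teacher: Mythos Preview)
Your outline establishes the correct \emph{numerical} version of the formula but not the linear equivalence that the theorem asserts. The third step computes only degrees: from Noether's formula and the fiberwise identity $e(F_s)=12(1-\lct(S,F_s))+\nu_s(j)$ you obtain
\[
\deg(12L)=12\deg\Delta^{\mathrm{ns}}+\deg j^*Q,
\]
but you then assert ``the linear equivalence $12L\sim 12\Delta^{\mathrm{ns}}+j^*Q$''. On a curve $C$ of genus $g(C)\ge 1$ equality of degrees does not imply linear equivalence, so this inference fails. The theorem requires $12M\sim j^*Q$ as an honest relation in $\Pic(C)$, and this is exactly what Proposition~\ref{prop:canonical-bundle-formula} later uses when it restricts to curves and compares divisor classes via the injection $\sigma_1^*\colon\Pic(C)\hookrightarrow\Pic(S)$; a mere degree identity would not suffice there.

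The paper closes this gap by a different mechanism: instead of counting Euler numbers, it identifies $L$ with $\sigma_*\omega_{S/C}$ and produces an explicit meromorphic section of $(\sigma_*\omega_{S/C})^{\otimes 12}$ coming from the weight-six cusp form $\delta(z)$ on the Jacobian fibration. The local orders of vanishing of this section at each $s\in C$ are then computed (via Ueno's extension method) and shown to equal $12(1-\lct(S,F_s))$ plus the pole order of $j$; this yields the isomorphism $(\sigma_*\omega_{S/C})^{\otimes 12}\cong\mathcal{O}_C(12\Delta+j^*Q)$ directly, not just an equality of degrees. Multiple fibers are handled by logarithmic transformations rather than by the $(m_i-1)F_i$ term in Kodaira's formula. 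So your approach and the paper's agree on the bookkeeping of coefficients, but the paper's use of the modular form is what upgrades the numerical identity to a genuine linear equivalence; without an analogous construction, your argument is incomplete.
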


\begin{proof}[Sketch of proof]
We explain the main idea of the proof given in \cite[\S{2}]{Fujita}, where it is assumed that the surface $S$ is projective, but this assumption is not actually used. 

We start with case when $\sigma$ has no multiple fibers. In this case one can write
\begin{equation}
\label{eq-is-line-bundle}
\omega_S \sim \sigma^*(\omega_C \otimes \sigma_* \omega_{S/C}),
\end{equation}
where $\omega_S$ and $\omega_C$ are the canonical line bundles on $S$ and $C$, respectively, and 
$$
\omega_{S/C}=\omega_S\otimes \sigma^* \omega_C^{-1}
$$ 
is the relative canonical line bundle. We want to show that $12\Delta$ and $12M$ are integral divisors, and 
\begin{equation}
\label{eq-isom-fujita}
(\sigma_* \omega_{S/C})^{\otimes 12}\cong \mathcal{O}_C(12(\Delta+M)).
\end{equation}
To this aim, we 
construct a holomorphic section of the line bundle $(\sigma_*\omega_{S/C})^{\otimes12}$ and understand its zeroes on $C$. Let $U\subset C$ be the open set over which $\sigma$ is a smooth morphism. Put $V=\sigma^{-1}(U)$ and 
$$
\varsigma = \sigma|_V\colon V\to U.
$$
Let $\overline{\varsigma}\colon \mathrm{Jac}(\varsigma)\to U$ be the Jacobian fibration of $\varsigma$, see e.g. \cite[\S\,V.9]{BHPVdV04}. Then one has 
\[
\varsigma_* \omega_{V/U} \cong \overline{\varsigma}_*\omega_{Jac(\varsigma)/U}.
\] 
Since $\overline{\varsigma}$ admits a section, its fibers can be brought to the Weierstrass form. In particular, one has a well-defined cusp form $\delta(z)$ of weight six on $U$, which provides a holomorphic section $s$ of the line bundle $(\overline{\varsigma}_*\omega_{Jac(\varsigma)/U})^{\otimes12}$, and thus a meromorphic section of $(\sigma_*\omega_{S/U})^{\otimes12}$. Now, we use the extension method of \cite[\S6]{Ue73} to obtain the isomorphism~\eqref{eq-isom-fujita} in the case when there are no multiple fibers. 

Let $z_0\in C\setminus U$. 
Assume that the fiber $F=\sigma^{*}z_0$ is not of type $I_n$ or $I_n^*$ for~\mbox{$n\geq 1$}. 
Then the extension of $\delta(z)$ to $z_0$ is holomorphic and has a zero 
of order~\mbox{$12(1-\mathrm{lct}(S, \sigma^* z_0))$} at~$z_0$. 
If $F$ is of type $I_n$ for $n\geq 1$, then the extension of~$\delta(z)$ to $z_0$ is holomorphic and has a zero of order $n$, while the meromorphic function $j$ has a pole of order $12n$ at~$z_0$, and the coefficient 
at $z_0$ in $\Delta$ equals~$0$.  
If $F$ is of type~$I_n^*$ for $n\geq 1$, then the extension of~$\delta(z)$ to $z_0$ is holomorphic and has a zero of order~\mbox{$6+n$}, while $j$ has a pole of order $12n$ at $z_0$, 
and the coefficient at $z_0$ in $12\Delta$ equals~$6$. Furthermore, $j$ has no other poles. 
Hence, the extension of $\delta(z)$ provides a section of the line bundle 
$$
(\sigma_* \omega_{S/C})^{\otimes 12}\otimes \mathcal{O}_C(-12(\Delta+M))
$$
without zeros and poles, which gives the desired isomorphism \eqref{eq-isom-fujita}.

Now consider the case when $\sigma$ has multiple fibers. 
Assume that $\sigma^{*}z_0$ for $z_0\in C$ has type $\prescript{}{m}I_n$ for $n\geq 0$, $m\geq 1$. 
We are going to establish an isomorphism 
\begin{equation}
\label{eq-isom-fujita-2}
(\sigma_* \omega_{S/C})^{\otimes 12\mu}\cong \mathcal{O}_C(12\mu(\Delta+M)).
\end{equation}
To this aim, 
we apply (the inverse of) a logarithmic transformation which replaces a fiber $F$ of type $\prescript{}{m}I_n$ by a fiber $F'$ of type $I_n$, see~\mbox{\cite[\S\,V.13]{BHPVdV04}}. 
After eliminating all multiple fibers, we can apply the canonical bundle formula obtained above, 
and then compute the additional input of multiple fibers. Overall, from local computation it follows that the extension of $\delta(z)^m$ to $z_0$ is holomorphic and has a zero of order~\mbox{$12(m-1)+12mn$} at~$z_0$. 
Hence the extension of $\delta(z)^{\mu}$ has a zero of order 
$$
\frac{\mu}{m}\cdot \big(12(m-1)+12mn\big)=12\mu\big((1-\frac{1}{m})+n) 
$$
at $z_0$. On the other hand, the coefficient at $z_0$ in $\Delta$
equals $1-1/m$, while $j$ has a pole of order $n$ at $z_0$. 
Therefore, $\delta(z)^{\mu}$ extends to a section of the line bundle
$$
(\sigma_* \omega_{S/C})^{\otimes 12\mu}\otimes \mathcal{O}_C(-12\mu(\Delta+M))
$$
without zeroes and poles. 
This gives us the isomorphism~\eqref{eq-isom-fujita-2}. Now it follows 
from~\eqref{eq-is-line-bundle} that formula~\eqref{eq-fujita-ueno} holds. 
\end{proof}

\begin{remark}
We point out that in the course of the proof of Theorem \ref{theorem:Kodaira}, even if we start with a projective surface, we may have to replace it with a non-projective surface when doing logarithmic transformations to deal with multiple fibers. On the other hand, passing to the Jacobian fibration brings us to the quasi-projective category anyway. 
\end{remark}

%\begin{remark}
%In fact, one can prove that  
%\begin{equation*}
%K_S\sim_{12} \sigma^*\left(K_C+\Delta+M\right);
%\end{equation*}
%the reader can find a more detailed proof in \cite[\S2]{Fujita}. 
%However, formula~\eqref{eq-fujita-ueno} is enough for our purposes. 
%\end{remark}

The following examples illustrate Theorem \ref{theorem:Kodaira} in the case when there are no degenerate fibers of $\sigma$. 

\begin{example}\label{example:Hopf}
Let $\alpha$ and $\beta$ be complex numbers such that $0<|\alpha|, |\beta|<1$, 
and~\mbox{$\alpha^k=\beta^l$} for some positive integers $k$ and $l$. Then the quotient 
of $\CC^2\setminus\{0\}$ by the group $\ZZ$ generated by the transformation 
$$
(z_1,z_2)\mapsto (\alpha z_1,\beta z_2)
$$
is a primary Hopf surface with an elliptic fibration 
$\sigma\colon S\to \PP^1$; we refer the reader to~\mbox{\cite[\S\,V.18]{BHPVdV04}}
for more details on Hopf surfaces. 
Observe that all fibers of~$\sigma$ are isomorphic to each other; in particular, 
$\sigma$ has no degenerate fibers. Thus, in the notation of Theorem~\ref{theorem:Kodaira} one has $\Delta=j^*Q=0$, so that 
$$
K_S\sim_{\QQ}\sigma^* K_{\PP^1}.
$$
\end{example}

\begin{example}\label{example:Kodaira}
Let $S$ be a primary Kodaira surface, see e.g.~\mbox{\cite[\S\,V.5]{BHPVdV04}}. 
Then the algebraic reduction of $S$ 
is an elliptic fibration $\phi\colon S\to E$ without degenerate fibers over an elliptic curve $E$.
Thus, by Theorem~\ref{theorem:Kodaira} we have 
$$
K_S\sim_{\QQ}\sigma^* K_E\sim 0.
$$
\end{example}

\begin{remark}
In fact, in Examples~\ref{example:Hopf} and~\ref{example:Kodaira}
one can show that the canonical divisor of $S$ is linearly equivalent (not just $\QQ$-linearly equivalent) to $\sigma^*K_{\PP^1}$ and $0$, respectively.  
\end{remark}

We proceed with a couple of observations concerning Theorem~\ref{theorem:Kodaira}. 

\begin{remark}
In the notation of Theorem \ref{theorem:Kodaira}, the condition that $\sigma$ is relatively minimal is equivalent to the condition that $K_S$ is $\QQ$-linearly trivial over $C$. 
\end{remark}

The next result generalizes Theorem \ref{theorem:Kodaira} to the case of not necessarily relatively minimal fibrations on complex surfaces, cf. \cite[Example 7.16]{PS09} in the projective setting. 

\begin{proposition}[{cf. \cite[2.10]{Fujita}}]
\label{prop-elliptic-surface-with-divisor}
Let $S$ be a smooth compact complex surface with a 
(not necessarily relatively minimal) fibration $\sigma\colon S\to C$
over a curve $C$ whose typical fiber is an elliptic curve. 
Let $R$ be a $\sigma$-vertical integral divisor on $S$ such that~\mbox{$(S, R)$} is an lc sub-pair. 
Assume that 
\begin{equation}
\label{eq-kodaira-formula-with-divisor-assumption}
K_S+R\sim_{\QQ}0/C.
\end{equation} 
Let $j\colon C\to\PP^1$ be the map defined by the $j$-invariant 
of the fibers of $\sigma$. 
Let $\mu$ be the least common multiple of the multiplicities of all multiple fibers of $\sigma$. 
Then there exists a $\mathbb{Q}$-divisor $\Delta$ on $C$ such that 
\begin{equation}
\label{eq-kodaira-formula-with-divisor}
K_S+R\sim_{12\mu} \sigma^*\left(K_C+M+\Delta\right),
\end{equation}
where $M\sim_{12}\frac{1}{12}j^*Q$ for a point $Q\in \PP^1$, and $\Delta=\Delta(\sigma, R)$ is the discriminant $\mathbb{Q}$-divisor. 
Moreover, $(C, \Delta)$ is an lc sub-pair. 
\end{proposition}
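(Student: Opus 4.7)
The plan is to reduce the statement to Theorem~\ref{theorem:Kodaira} by passing to a relatively minimal model. Let $\pi\colon S\to\bar{S}$ be the composition of contractions of all $(-1)$-curves contained in fibers of $\sigma$, so that the induced fibration $\bar{\sigma}\colon\bar{S}\to C$ is relatively minimal. Since $\pi$ contracts only $(-1)$-curves supported in fibers, the $j$-invariant map and the multiplicity $\mu$ attached to $\bar{\sigma}$ coincide with those attached to $\sigma$. Write $K_S=\pi^*K_{\bar{S}}+E$ for an effective $\pi$-exceptional integral divisor $E$ supported on fibers of $\sigma$.

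Applying Theorem~\ref{theorem:Kodaira} to $\bar{\sigma}$ gives $K_{\bar{S}}\sim_{12\mu}\bar{\sigma}^*(K_C+\bar{\Delta}+M)$, where $\bar{\Delta}=\Delta(\bar{\sigma})$ and $M\sim_{12}\frac{1}{12}j^*Q$. Pulling back by $\pi$, using $\pi^*K_{\bar{S}}=K_S-E$ and $\sigma=\bar{\sigma}\circ\pi$, and adding $R$ yields
\[
K_S+R\sim_{12\mu}\sigma^*(K_C+\bar{\Delta}+M)+(E+R).
\]
By the hypothesis $K_S+R\sim_{\mathbb{Q}}0/C$, we may write $K_S+R\sim_{\mathbb{Q}}\sigma^*L$ for some $\mathbb{Q}$-divisor $L$ on $C$. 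Subtracting the two $\mathbb{Q}$-linear equivalences yields $E+R\sim_{\mathbb{Q}}\sigma^*N$ for the $\mathbb{Q}$-divisor $N:=L-K_C-\bar{\Delta}-M$ on $C$, and therefore
\[
K_S+R\sim_{\mathbb{Q}}\sigma^*(K_C+\bar{\Delta}+N+M).
\]

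The main obstacle is to identify $\bar{\Delta}+N$ with the discriminant $\Delta(\sigma,R)$ given by~\eqref{eq-defin-discriminant-divisor}. Since both are $\mathbb{Q}$-divisors on the smooth curve $C$, this is a coefficient-by-coefficient check at each $z_0\in C$: the required identity reads
\[
\bigl(1-\lct(\bar{S};\bar{\sigma}^*z_0)\bigr)+\coeff_{z_0}N=1-\lct(S,R;\sigma^*z_0).
\]
The $\pi$-exceptional $\mathbb{Q}$-divisor $F:=E+R-\pi^*\pi_*R$ satisfies $K_S+R+\lambda\sigma^*z_0=\pi^*(K_{\bar{S}}+\pi_*R+\lambda\bar{\sigma}^*z_0)+F$, which allows one to compare log discrepancies of $\pi$-exceptional divisors between $(S,R+\lambda\sigma^*z_0)$ and $(\bar{S},\pi_*R+\lambda\bar{\sigma}^*z_0)$; the resulting shift in the log canonical threshold matches the local coefficient of $N$ read off from $E+R\sim_{\mathbb{Q}}\sigma^*N$. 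This local verification parallels the analysis of degenerate fibers underlying Theorem~\ref{theorem:Kodaira} and follows the strategy of \mbox{\cite[\S2]{Fujita}}, which applies verbatim in the analytic setting since every step is local on $C$. Finally, the lc property of $(C,\Delta)$ is automatic: on a smooth curve it reduces to every coefficient of $\Delta$ being at most $1$, equivalently $\lct(S,R;\sigma^*z_0)\ge 0$ for every $z_0\in C$, which holds by the lc hypothesis on $(S,R)$.
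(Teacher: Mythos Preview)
Your overall strategy---pass to the relatively minimal model $\bar{S}$, apply Theorem~\ref{theorem:Kodaira} there, and pull back---is the same as the paper's, which organizes it as (i) the relatively minimal case with a boundary $R$, followed by (ii) an induction on blowups using crepant invariance of the lct. However, your writeup has a genuine gap at the step where you compare discriminants.

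You obtain $E+R\sim_{\QQ}\sigma^*N$ and then propose a ``coefficient-by-coefficient check'' involving $\coeff_{z_0}N$. But $N=L-K_C-\bar{\Delta}-M$ is only determined up to $\QQ$-linear equivalence (because $L$ is), so its coefficients have no meaning, and the identity you write down is not well-posed. Your attempted workaround via the $\pi$-exceptional divisor $F=E+R-\pi^*\pi_*R$ does not close the gap: the sentence ``the resulting shift in the log canonical threshold matches the local coefficient of $N$'' is not a computation. What is missing is the observation that $E+R$ is not merely $\QQ$-linearly equivalent to a pullback but \emph{equal} to one: since $E+R$ is $\sigma$-vertical and intersects every fiber component trivially (being $\sim_{\QQ}$ a pullback), Zariski's lemma forces $E+R=\sigma^*D$ for a uniquely determined $\QQ$-divisor $D$ on $C$. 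Then $K_S+R=\pi^*K_{\bar{S}}+\sigma^*D=\pi^*(K_{\bar{S}}+\bar{\sigma}^*D)$, so $(S,R)$ is the crepant pullback of $(\bar{S},\bar{\sigma}^*D)$; birational invariance of the lct together with semi-additivity (Remark~\ref{rem-semi-additivity}) gives $\Delta(\sigma,R)=\Delta(\bar{\sigma})+D$ at once, and since $\mu D$ is integral the equivalence is $\sim_{12\mu}$, not just $\sim_{\QQ}$. This is exactly the mechanism behind the paper's argument: the Zariski step is hidden in the relatively minimal case (where it shows $R=\sum c_P\sigma^*P$), and the crepant comparison is the content of the blowup step.
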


\begin{proof}
First, assume that $\sigma\colon S\to C$ is a relatively minimal elliptic fibration. Then Theorem~\ref{theorem:Kodaira} yields
\[
K_S\sim_{12\mu} \sigma^*(K_C+\frac{1}{12}j^*Q+\Delta_0), 
\]
where $\Delta_0$ is an effective $\QQ$-divisor such that 
$$
\Delta_0=\sum\limits_{P\in C} (1-\lct(S;\sigma^*P)) P.
$$
Then \eqref{eq-kodaira-formula-with-divisor-assumption} implies that $R$ is $\QQ$-linearly trivial over $C$, so that 
$$
R=\sum_{P\in C} c_P \sigma^*(P)
$$ 
for some $c_P\in \mathbb{Q}$. It follows that 
\[
K_S + R \sim_{12\mu} \sigma^*(K_C + \frac{1}{12}j^*Q + \Delta)
\]
where \[
\Delta = \sum_{P\in C}(1-\mathrm{lct}(S;\sigma^*P)+c_P)P = \sum_{P\in C}(1-\mathrm{lct}(S, R;\sigma^*P))P
\]
by Remark \ref{rem-semi-additivity}.  
Hence the formula \eqref{eq-kodaira-formula-with-divisor} holds for a relatively minimal elliptic fibration. 

Now we treat the general case. 
Since in dimension~$2$ any fibration has a relatively minimal model, it is enough to suppose that the formula \eqref{eq-kodaira-formula-with-divisor} holds for a fibration~\mbox{$\sigma\colon S\to C$}, and prove it for a fibration $\sigma_1\colon S_1\to C$ such that $S_1$ is obtained from~$S$ by a composition $\phi\colon S_1\to S$ of blow ups, 
assuming that $\sigma_1$ satisfies~\eqref{eq-kodaira-formula-with-divisor-assumption}, 
i.e.~\mbox{$K_{S_1}+R_1\sim_{\QQ}\sigma_1^*D$} for some $\QQ$-divisor $D$ on $C$. 
So we have the following diagram:
\begin{equation*}
\begin{tikzcd}
S_1 \arrow{d}{\sigma_1} \arrow{r}{\phi}  & S \arrow{d}{\sigma} \\
C \arrow[equal]{r} & C  
\end{tikzcd}
\end{equation*}
Hence $K_{S}+R\sim_{\QQ}\sigma^*D$, where $R=\phi_*R_1$. 
This gives $\phi^*(K_{S_1}+R_1)\sim_{\mathbb{Q}} K_S+R$.  
By assumption, we have
\[
K_{S} + R \sim_{12\mu} \sigma^*(K_C + \frac{1}{12}j^*Q + \Delta), 
\]
where $\Delta$ is defined by the formula
\[
\Delta = \sum_{P\in C}(1-\mathrm{lct}(S, R;\sigma^*P))P.
\]
It follows that 
\[
K_{S_1}+R_1\sim\phi^*(K_{S} + R)\sim_{12\mu}\sigma_1^*(K_C + \frac{1}{12}j^*Q + \Delta)
\]
and 
\[
\mathrm{lct}(S_1, R_1;\sigma_1^*P)=\mathrm{lct}(S, R;\sigma^*P),
\]
thus $\Delta_1=\Delta$. 

The fact that $(C,\Delta)$ is an lc sub-pair is trivial, because the coefficients 
of $\Delta$ do not exceed $1$ by construction. 
\end{proof}

Now we consider varieties of arbitrary dimension.

\begin{proposition}[{cf. \cite[2.15]{Fujita}}]
\label{prop:canonical-bundle-formula}
Let $X$ be a compact complex manifold  with a 
fibration $\sigma\colon X\to Y$ 
over a smooth projective variety $Y$ 
whose typical fiber is an elliptic curve. Let $R$ be a $\sigma$-vertical integral divisor on $X$ such that $(X, R)$ is an lc sub-pair. Assume further that  
\[
K_X+R\sim_{N}0/Y
\]
for some positive integer $N$.   
Then
\begin{equation*}
K_X+R\sim_{N'} \sigma^*\left(K_Y+M + \Delta\right)
\end{equation*}
for some positive integer $N'$, where 
\begin{enumerate}
\item
$\Delta=\Delta(\sigma,R)$ is the discriminant $\mathbb{Q}$-divisor, 
\item
the moduli $\mathbb{Q}$-divisor $M$ is b-nef, 
\item
$(Y, \Delta)$ is an lc sub-pair. 
\end{enumerate}
Moreover, if the $j$-invariant map $j\colon Y\dashrightarrow \mathbb{P}^1$ is a morphism, then $M\sim_{12}\frac{1}{12}j^*Q$ for a point~\mbox{$Q\in\PP^1$}; in particular, $M$ is nef. 
\end{proposition}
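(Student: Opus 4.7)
The plan is to construct $M$ and $\Delta$ directly from the hypothesis and then reduce everything else to the two-dimensional case already established in Proposition~\ref{prop-elliptic-surface-with-divisor}. Since $K_X + R \sim_N 0/Y$, there exists a $\QQ$-divisor $D$ on $Y$ with $N(K_X+R) \sim N\sigma^*D$. I would define $\Delta = \Delta(\sigma, R)$ by the discriminant formula~\eqref{eq-defin-discriminant-divisor} and set $M := D - K_Y - \Delta$, so that the desired relation $K_X + R \sim_{N'} \sigma^*(K_Y + \Delta + M)$ holds by construction for a suitable $N'$, giving~(1) of the statement for free.

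To pin down $M$ and prove its nefness under the assumption that $j\colon Y\to \PP^1$ is a morphism, I would cut down to general curves. Using Corollary~\ref{corollary:restriction-to-a-curve}, pick a smooth irreducible curve $C\subset Y$ cut out by $m-1$ general very ample divisors $L_1,\ldots,L_{m-1}$. By iterated Bertini and adjunction, the preimage $V = \sigma^{-1}(C)$ is a smooth compact complex surface, $\sigma_V\colon V\to C$ is a fibration whose typical fiber coincides with the typical fiber of $\sigma$ (hence is an elliptic curve), and $R_V := R|_V$ is $\sigma_V$-vertical with $(V,R_V)$ sub-lc and $K_V + R_V \sim_{\QQ} 0/C$ (combining $K_X + R \sim_{\QQ} \sigma^*D$ with adjunction). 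Iterating Lemma~\ref{lem-delta-restricted} gives $\Delta|_C = \Delta(\sigma_V, R_V)$. Applying Proposition~\ref{prop-elliptic-surface-with-divisor} to $\sigma_V$, together with the fact that $j|_C\colon C\to\PP^1$ is a morphism, yields $M|_C \sim_{\QQ} \tfrac{1}{12}(j|_C)^*Q = \tfrac{1}{12}(j^*Q)|_C$. Since this holds for general such $C$, Remark~\ref{rem-restriction-of-two-divisors} forces $M \sim_{\QQ} \tfrac{1}{12}j^*Q$ on $Y$, which is nef and takes the stated explicit form.

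For the general case, without the hypothesis that $j$ is a morphism, I would resolve the indeterminacy of $j\colon Y\dashrightarrow \PP^1$: take a birational morphism $\tau\colon Y'\to Y$ from a smooth projective variety $Y'$ on which $j\circ\tau$ extends to a morphism $Y'\to \PP^1$. Applying the base change construction of Remark~\ref{remark:basechange-property} yields an lc-trivial fibration $\sigma'\colon (X',R')\to Y'$ whose moduli divisor $M_{Y'}$ is nef by the previous paragraph; the b-divisor property $\tau_*M_{Y'} = M$ then shows that $M$ is b-nef, establishing~(2). For the sub-lc assertion~(3) on $(Y,\Delta)$, the coefficients of $\Delta$ satisfy $1 - \lct(X, R; \sigma^*Z) \leq 1$ by the sub-lc hypothesis on $(X,R)$; to verify the full sub-lc property on an arbitrary log resolution of $(Y, \Delta)$, I would apply the base change property to that resolution and again reduce to general curves, where Proposition~\ref{prop-elliptic-surface-with-divisor} provides the needed bound on log discrepancies. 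The main obstacle I anticipate is the genericity book-keeping: ensuring that for a sufficiently general complete intersection curve $C$ the surface $V = \sigma^{-1}(C)$ is indeed smooth and that the discriminant $\QQ$-divisor commutes both with the iterated restrictions and with the base changes invoked, despite $\sigma$ being only a proper, not necessarily projective, morphism of complex varieties.
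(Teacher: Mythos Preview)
Your strategy is essentially the paper's: define $\Delta$ and $M$ from the relation $K_X+R\sim_N\sigma^*D$, reduce to the surface case by restricting to a general complete-intersection curve $C\subset Y$ and invoking Proposition~\ref{prop-elliptic-surface-with-divisor}, and handle the case where $j$ is not a morphism by resolving its indeterminacy and using the base change property of Remark~\ref{remark:basechange-property} to push forward.

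Two small slips are worth flagging. First, the step from $M|_C\sim_{\QQ}\tfrac{1}{12}(j^*Q)|_C$ for general $C$ to $M\sim_{\QQ}\tfrac{1}{12}j^*Q$ on $Y$ is not Remark~\ref{rem-restriction-of-two-divisors}, which only transfers \emph{equality} of $\QQ$-divisors; you need the contrapositive of Corollary~\ref{corollary:restriction-to-a-curve} applied to an integral multiple of $M-\tfrac{1}{12}j^*Q$, together with injectivity of $\sigma^*\colon\Pic(C)\to\Pic(S)$, exactly as the paper does. Second, after the base change of Remark~\ref{remark:basechange-property} the space $X'$ is only normal, so before reapplying your slicing argument (which needs Bertini on a smooth total space) you must resolve its singularities; the paper does this explicitly.
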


\begin{proof}
We shall construct some modifications of $X$ and $Y$ and use the base change property of the discriminant part and the moduli part as in Remark~\ref{remark:basechange-property}. First of all, since the typical fiber of $\sigma$ is an elliptic curve, there is a $j$-invariant map $j\colon Y\dashrightarrow \mathbb{P}^1$. Resolving the indeterminacy of the map $j$, we obtain a morphism $\psi\colon Y_1\to Y$ such that $j\colon Y_1\to \mathbb{P}^1$ is a morphism. We may assume that $Y_1$ is smooth. Consider the following diagram
\begin{equation}
\label{diagram_resolution}
\begin{tikzcd}
X_1 \arrow{d}{\sigma_1} \arrow{r}{\phi}  & X \arrow{d}{\sigma} \\
Y_1 \arrow{r}{\psi} \arrow{d}{j} & Y \arrow[dashed]{d}{j}\\
\mathbb{P}^1 \arrow[equal]{r} & \mathbb{P}^1
\end{tikzcd}
\end{equation}
Here $X_1$ is the normalization of the main component of the fiber product $X\times_Y Y_1$. Resolving singularities, we may assume that $X_1$ is smooth.

We define a $\mathbb{Q}$-divisor $R_1$ on $X_1$ by the formula 
\[
K_{X_1}+R_1\sim \phi^*(K_{X}+R).
\] 
Note that $R_1$ is an integral divisor since $X$ is smooth and $R$ is an integral divisor. 
Then
\[
K_{X_1}+R_1\sim_{N} \sigma_1^*D_1,
\] 
where $D_1=\psi^*D$ is a $\QQ$-divisor on $Y_1$ such that $ND_1$ is an integral divisor. 
We can write
$$
K_{X_1}+R_1\sim_{N} \sigma_1^*(K_{Y_1}+\widetilde{\Delta})
$$
for a $\QQ$-divisor $\widetilde{\Delta}=D_1-K_{Y_1}$
on $Y$. 
Put $M_1=\frac{1}{12}j^*Q$ for a point $Q\in\PP^1$, 
and define~$\Delta_1$ by the formula
\[
\Delta_1=\sum\limits_{Z\subset Y} (1-\lct(X_1,R_1;\sigma^*Z)) Z,
\]
where $Z$ runs through the set of all prime divisors on $Y_1$, and the log-canonical threshold is computed over the typical point of $Z$. 
It follows that $N_1\Delta_1$ is an integral divisor for some $N_1>0$ which depends on the map $\sigma_1$. 

Denote by $\mu$ the least common multiple of the multiplicities of the fibers of~$\sigma_1$, that is, the least common multiple of the multiplicities of all 
pull-backs of prime divisors on~$Y_1$ via~$\sigma_1$. 
Set 
$$
N'=12NN_1\mu.
$$  
We are going to show that $\widetilde{\Delta}\sim_{N'} M_1+\Delta_1$.

Suppose that 
$$
\widetilde{\Delta}-\Delta_1 - M_1\not\sim_{N'}0, 
$$
or, in other words,
%\quad \quad \text{so} \quad \quad 
$$
N'(\widetilde{\Delta}-\Delta_1 - M_1)\not\sim 0.
$$ 
According to Corollary~\ref{corollary:restriction-to-a-curve}, there exists
a smooth irreducible curve
$C\subset Y$ which is a complete intersection of general very ample divisors $L_1,\ldots,L_{n-2}$ such that
$$
N'(\widetilde{\Delta}-\Delta_1-M_1)\vert_C\not\sim 0,
$$
or, in other words, 
\begin{equation}\label{eq:restricts-non-trivially}
(\widetilde{\Delta}-\Delta_1-M_1)\vert_C\not\sim_{N'} 0. 
\end{equation} 

Let $S$ denote the preimage of $C$ with respect to~$\sigma$. 
Since $L_i$ are general, we conclude from Bertini theorem, 
see e.g.~\mbox{\cite[Theorem~4.21]{Ue75}}, 
that $S$ is a smooth compact complex surface.
Let $\mathcal{D}$ be the (finite) set of all prime divisors $Z$ on $Y_1$ such that
$$
\lct(X_1,R_1+\sigma^*Z))\neq1.
$$
It also follows from Bertini theorem that 
$C$ is in general position with respect to all the divisors
$D\in\mathcal{D}$. 
Put $R_S = R_1|_S$. 
Hence for the $\QQ$-divisor $\Delta_C=\Delta_1\vert_C$
we have
$$
\Delta_C=\sum\limits_{P\in C} (1-\lct(S,R_S;\sigma^*P)) P,
$$
see Lemma~\ref{lem-delta-restricted}.

By adjunction, one has
$$
K_C\sim (K_{Y_1}+L_1+\ldots+L_{n-2})\vert_C.
$$
Thus, we have  
\begin{multline}\label{eq:elliptic-fibration-long-equivalence} 
K_S + R_S\sim (K_{X_1}+R_1+\sigma^*L_1+\ldots+\sigma^*L_{n-2})\vert_S\\
\sim_{N'}
\big(\sigma_1^*(K_{Y_1}+\widetilde{\Delta}+L_1+\ldots+L_{n-2})\big)\vert_S\\
\sim_{N'}
\sigma_1^*\big((K_{Y_1}+\widetilde{\Delta}+L_1+\ldots+L_{n-2})\vert_C\big)\\
\sim_{N'} \sigma^*\big(K_C+\widetilde{\Delta}\vert_C)\big).
\end{multline}
Note that $R_S$ is an integral divisor. 
In particular, a typical fiber of the fibration~\mbox{$\sigma_1\colon S\to C$} is an elliptic curve. Also, we see that the least common multiple of the multiplicities of the fibers of this fibration equals~$\mu$. 
Thus, by Proposition~\ref{prop-elliptic-surface-with-divisor}
one has 
\begin{equation}\label{eq:elliptic-fibration-restriction-to-curve}
K_S + R_S\sim_{12\mu} \sigma^*(K_C+M_C+\Delta_C), 
\end{equation}
where $M_C\sim_{12}\frac{1}{12}j^*Q$ for a point $Q\in\PP^1$. 
Combining~\eqref{eq:elliptic-fibration-long-equivalence} and~\eqref{eq:elliptic-fibration-restriction-to-curve}, 
we obtain 
$$
\sigma_1^*(\widetilde{\Delta}\vert_C)\sim_{N'} \sigma_1^*(M_C + \Delta_C)=\sigma_1^*\big((M_1+\Delta_1)\vert_C\big).
$$
On the other hand, it follows from projection formula that the map
$$
\sigma_1^*\colon\Pic(C)\to \Pic(S)
$$
is injective. 
This gives 
$$
\widetilde{\Delta}\vert_C\sim_{N'} (M_1+\Delta_1)\vert_C.
$$
The obtained contradiction with~\eqref{eq:restricts-non-trivially} 
shows that $\widetilde{\Delta}\sim_{N'} M_1+\Delta_1$, and  
$$
K_X+R\sim_{N'} \sigma^*\left(K_Y+M + \Delta\right).
$$

The fact that $(C, \Delta)$ is an lc sub-pair follows from \cite[Corollary 7.18]{PS09}. Note that in \cite{PS09} the authors work in the case of projective varieties, however, their argument works in the analytic setting as well.

Note that we have $\psi_*M_1=M$ (and $\psi_*\Delta_1=\Delta$), so $M$ is b-nef. Also, 
from diagram~\eqref{diagram_resolution} it is clear that if the $j$-invariant map $j\colon Y\dashrightarrow \mathbb{P}^1$ is a morphism, then~\mbox{$M\sim_{12}\frac{1}{12}j^*Q$} for a point $Q\in\PP^1$.
\end{proof}

We formulate a version of the conjecture of Prokhorov and Shokurov 
\cite[Conjecture 7.13]{PS09} which is applicable to our setting.

\begin{conjecture}
\label{conj-analytic-PS}
Let $X$ be a compact complex manifold with an 
lc-trivial contraction $\sigma\colon (X,R)\to Y$ over a smooth
projective variety $Y$, where $R$ is a $\mathbb{Q}$-divisor on~$X$. 
(We emphasize that $\sigma$ need not be a projective morphism!) 
Then we have
\[
K_X + R \sim_{\mathbb{Q}} \sigma^*(K_Y+\Delta+M),
\]
where $\Delta=\Delta(\sigma, R)$ is the discriminant $\mathbb{Q}$-divisor, and the moduli $\mathbb{Q}$-divisor $M$ is $b$-semi-ample. 
\end{conjecture}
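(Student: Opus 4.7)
The plan is to adapt the Ambro--Fujino proof of the projective Prokhorov--Shokurov conjecture to the analytic setting, exploiting the crucial hypothesis that the base $Y$ is projective (only the total space $X$ and the morphism $\sigma$ are allowed to be non-projective). Throughout I would use freely the base change property of the discriminant and moduli parts recorded in Remark~\ref{remark:basechange-property}, which lets me replace $Y$ by any birational or finite cover and $X$ by any desingularisation of the main component of the fibre product without affecting b-properties of $M$.

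First I would normalise the setup. By resolutions of singularities on $X$ and $Y$, and by a finite base change $\tau\colon Y''\to Y$ obtained from Kawamata's covering lemma applied to the projective variety $Y$, I pass to a higher birational model $\sigma'\colon (X',R')\to Y'$ in which $X'$ and $Y'$ are smooth, $R'$ has simple normal crossing support, and $\sigma'$ is semistable in codimension one with unipotent monodromies around each component of the boundary divisor. Since the target conclusion is only b-semi-ampleness, this loses no information.

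Second, I would realise the moduli part Hodge-theoretically. Over the smooth locus $Y^\circ$ of $\sigma'$, the local system $R^{d}\sigma'_* \mathbb{C}$ (with $d$ the relative dimension) underlies a polarised variation of Hodge structure, whose bottom Hodge piece $\sigma'_*\omega_{X'^\circ/Y^\circ}$ is an invertible sheaf. Its Deligne canonical extension $\overline{\mathcal{H}}$ across the boundary is a line bundle on $Y'$, and a restriction-to-a-general-curve argument (using Corollary~\ref{corollary:restriction-to-a-curve} to test $\mathbb{Q}$-linear equivalence of $\mathbb{Q}$-divisors on $Y'$, as in the proof of Proposition~\ref{prop:canonical-bundle-formula}) identifies $M$ up to $\mathbb{Q}$-linear equivalence with $c_1(\overline{\mathcal{H}})$. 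An analytic version of the Fujita--Kawamata--Viehweg semipositivity theorem then yields b-nefness of $M$, matching the projective story.

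The hard part, and where I expect the proof to genuinely get stuck, is upgrading b-nefness to b-semi-ampleness. In the projective case this rests, via the Ambro--Fujino machinery, on finite generation of canonical rings of pairs and ultimately on BCHM; in the analytic setting such finite generation can fail, as recorded in Remark~\ref{rem-ueno-connected}, and in relative dimension at least two Atiyah-type examples (cf.\ Example~\ref{exam-atiyah} announced in the introduction) already exhibit obstructions to semi-ampleness of $M$ on $Y$ itself. A plausible route is to show that the variation of Hodge structure factors through an analytic period map $Y\dashrightarrow \mathcal{P}$ whose image admits a Moishezon compactification carrying an ample line bundle pulling back to $M$; resolving the period map on some birational model of $Y$ would then produce the required semi-ampleness. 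Establishing such algebraicity of the period image under only the hypotheses of the conjecture, without projectivity of $\sigma$ and without an \emph{a priori} finitely generated canonical ring on $X$, is the essential obstacle, and I expect that overcoming it in full generality requires ideas strictly beyond those used to prove Proposition~\ref{prop:canonical-bundle-formula-intro}.
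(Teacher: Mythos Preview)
The statement you are attempting to prove is Conjecture~\ref{conj-analytic-PS}, which the paper does \emph{not} prove in general; it establishes only the case of relative dimension at most $1$ (Proposition~\ref{prop-proof-of-conjecture}) and then exhibits Example~\ref{exam-atiyah} as a counterexample in relative dimension $2$. So there is no ``paper's own proof'' to compare against, and your proposal is an attempt to prove a statement that is, as written, false.

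The genuine gap is in your second step. You assert that an analytic version of the Fujita--Kawamata--Viehweg semipositivity theorem yields b-nefness of $M$, ``matching the projective story.'' But this is precisely what fails: in Example~\ref{exam-atiyah} the fibration $\sigma\colon X\to\mathbb{P}^1$ is smooth with abelian-surface fibres, the discriminant divisor vanishes, and one computes $M\sim -2Q$, which is not pseudo-effective and hence not b-nef. In that example all fibres are isomorphic tori, so the variation of Hodge structure over $Y^\circ=Y$ is trivial and your period-map heuristic produces nothing; the Hodge bundle nonetheless has negative degree because the lattice $\Lambda$ twists by $\mathcal{O}(1)\oplus\mathcal{O}(1)$. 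The semipositivity theorems you invoke require projectivity (or at least K\"ahlerness) of the morphism $\sigma$, not merely of the base $Y$, and Atiyah's construction shows that this hypothesis cannot be dropped. Your final paragraph correctly senses that something goes wrong in relative dimension $\geq 2$, but misidentifies the failure as occurring at the semi-ampleness upgrade rather than already at nefness.
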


One could also formulate an effective version of this conjecture, 
cf.~\mbox{\cite[7.13.2, 7.13.3]{PS09}}. 
Also, one can allow mild singularities on $X$ and~$Y$. \begin{proposition}
\label{prop-proof-of-conjecture}
Conjecture \ref{conj-analytic-PS} holds in the case $\dim X-\dim Y\leq 1$. 
\end{proposition}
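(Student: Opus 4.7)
The plan is to split based on the relative dimension of $\sigma$.

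When $\dim X = \dim Y$, the contraction $\sigma$ is bimeromorphic. The hypothesis $K_X + R \sim_{\mathbb{Q}} 0/Y$ gives $K_X + R \sim_{\mathbb{Q}} \sigma^* D$ for some $\mathbb{Q}$-divisor $D$ on $Y$, and setting $M = 0$ and $\Delta = D - K_Y$ works: indeed $\Delta$ coincides with the discriminant $\Delta(\sigma, R)$ defined via log canonical thresholds over typical points of prime divisors on $Y$, since $\sigma$ is an isomorphism outside a codimension-two subset; and $M = 0$ is trivially b-semi-ample.

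When $\dim X - \dim Y = 1$, the typical fiber $F$ of $\sigma$ is a smooth irreducible curve with $(K_F + R|_F) \sim_{\mathbb{Q}} 0$. If $R|_F = 0$ (in particular if $R$ is $\sigma$-vertical), then $F$ is an elliptic curve; after clearing denominators to reduce $R$ to an integral divisor and adjusting the relation $K_X + R \sim_N 0/Y$ accordingly, Proposition \ref{prop:canonical-bundle-formula} furnishes the formula with $M$ b-nef. To upgrade to b-semi-ampleness, pass to a birational modification $\psi\colon Y_1 \to Y$ on which the $j$-invariant extends to a morphism $j\colon Y_1 \to \PP^1$: then the pullback of $M$ equals $\tfrac{1}{12} j^* Q$ for a point $Q \in \PP^1$, which is semi-ample on $Y_1$ as the pullback of an ample divisor, so $M$ is b-semi-ample. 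If instead $R|_F \neq 0$, then necessarily $g(F) = 0$ and $R|_F$ has degree $2$; a direct analysis of the canonical bundle formula for such a $\PP^1$-fibration shows that the moduli part vanishes (the coarse moduli of a smooth rational curve with a degree-two boundary is a point), so one may take $M = 0$.

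The main obstacle is to unify the treatment of the elliptic and rational fiber cases and to justify the reduction from an arbitrary $\mathbb{Q}$-divisor $R$ to the integral $\sigma$-vertical divisor required by Proposition \ref{prop:canonical-bundle-formula}: one must split $R$ into vertical and horizontal parts and track how the horizontal part contributes to the discriminant on $Y$. The key technical tool is the restriction argument of Proposition \ref{proposition:restriction-to-a-curve} and Corollary \ref{corollary:restriction-to-a-curve}: cutting with general very ample divisors on $Y$ reduces matters to a surface fibration, where Kodaira's formula (Theorem \ref{theorem:Kodaira}) together with Proposition \ref{prop-elliptic-surface-with-divisor} furnishes the canonical bundle formula; one then lifts back to $Y$ using injectivity of $\sigma^*$ on Picard groups, mirroring the proof of Proposition \ref{prop:canonical-bundle-formula}.
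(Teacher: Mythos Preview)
Your treatment of the horizontal case contains a genuine gap. The claim that $R|_F \neq 0$ forces $g(F)=0$ is false for lc \emph{sub}-pairs: since $R$ may have negative coefficients, one can have $g(F)=1$ with $R|_F$ of degree zero but nonzero (e.g.\ $R|_F = P-Q$), or $g(F)\ge 2$ with $R|_F$ of negative degree. Your dichotomy therefore misses these cases entirely. Even in the genuinely rational case $g(F)=0$, the assertion that ``the moduli part vanishes'' is not justified: you would need to establish an analytic canonical bundle formula for $\PP^1$-fibrations with sub-boundary, which is not available in the paper and is not obviously simpler than the problem at hand. The restriction-to-surfaces machinery you invoke in the final paragraph (Proposition~\ref{prop:canonical-bundle-formula}, Theorem~\ref{theorem:Kodaira}, Proposition~\ref{prop-elliptic-surface-with-divisor}) is built specifically for the elliptic, $\sigma$-vertical situation and does not extend to these other fiber types.

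The paper's route is quite different and avoids all of this: if $R$ has any $\sigma$-horizontal component, that component furnishes a multisection, and hence a line bundle that is ample on the typical fiber; by \cite[Lemma~19, Example~12]{Kollar22} this forces $X$ to be Moishezon. One then replaces $X$ by a projective bimeromorphic model and invokes the already-known projective canonical bundle formula \cite[Theorem~8.1]{PS09}, which gives b-semi-ampleness of the moduli part directly. This handles all fiber genera at once without any case analysis. (A smaller point: ``clearing denominators to reduce $R$ to an integral divisor'' does not literally work, since $K_X+NR$ is not the quantity of interest; fortunately the proof of Proposition~\ref{prop:canonical-bundle-formula} goes through for $\QQ$-divisors, so this is only a phrasing issue.)
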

\begin{proof}
If $\dim X=\dim Y$ then $X$ is projective and $\sigma$ is projective as well. Then the claim follows from \cite[Example 7.8]{PS09}. 

Now consider the case $\dim X=\dim Y+1$.
Assume that $R$ has a $\sigma$-horizontal component. Then $X$ is Moishezon (cf. \cite[Lemma 19, Example 12]{Kollar22}), so there exists a bimeromorphic map $f\colon X_1\to X$ such that $X_1$ is a projective variety. Let~\mbox{$\sigma_1\colon X_1\to Y$} be the induced map. Let   $K_{X_1}+R_1\sim_{\mathbb{Q}}0/Y$ be the crepant pull-back of $K_X+R\sim_{\mathbb{Q}}0/Y$. Observe that $\sigma_1\colon (X_1,R_1)\to Y$ is an lc-trivial fibration. By~\mbox{\cite[Theorem 8.1]{PS09}}, we have 
\[
K_{X_1}+R_1\sim_{\mathbb{Q}}\sigma_1^*(K_Y+\Delta_1+M_1) 
\]
where $\Delta_1=\Delta(X_1,\sigma_1)$ is the discriminant $\mathbb{Q}$-divisor and $M_1$ is the moduli $\mathbb{Q}$-divisor which is b-semi-ample. 
After passing to a higher model of $X_1$, we may assume that~$M_1$ is semi-ample. 
Taking the push-forward, we get  
\[
K_{X}+R\sim_{\mathbb{Q}}\sigma^*(K_Y+\Delta+M), 
\]
where $f_*\Delta_1=\Delta=\Delta(X,\sigma)$ (cf. Remark~\ref{remark:basechange-property}), and $M=f_*M_1$ is b-semi-ample. 

Now assume that $R$ has no $\sigma$-horizontal components, so $R$ is $\sigma$-vertical. Then we conclude by Proposition \ref{prop:canonical-bundle-formula}. 
\end{proof}

In the projective setting, an effective version of Conjecture \ref{conj-analytic-PS} is proven when the relative dimension of $\sigma$ is at most $2$, see \cite[Theorem 1.4]{A++}. 
Furthermore, in the projective setting, for arbitrary relative dimension it is known that the weaker property always holds: namely, that the moduli $\mathbb{Q}$-divisor $M$ is b-nef, 
see~\mbox{\cite[Theorem~8.3.7]{Kollar}}. 
If the base of the fibration is a curve, this implies that the conjecture holds in this case in the projective setting, see \cite[Theorem 0.1]{Am04}. 
For a survey of other results in this direction, we refer the reader to~\cite{FlorisLazic}. 

However, the b-nef property fails in the complex setting even in relative dimension~$2$ as the following example shows. 

\begin{example}[{\cite[\S10]{Atiyah}}]
\label{exam-atiyah}
Let $Y=\mathbb{P}^1$. Pick two non-proportional global sections $s_1, s_2$ of $\oo_Y(1)$. Then $(s_1, s_2)$ is a section of $\oo_Y(1)\oplus \oo_Y(1)$ which is nowhere zero. 
Put
\[
I_1 = {\begin{pmatrix}
    1 & 0 \\
    0 & 1
\end{pmatrix}}, \quad
I_2 = {\begin{pmatrix}
    0 & 1 \\
    -1 & 0
\end{pmatrix}}, \quad
I_3 = {\begin{pmatrix}
    0 & \sqrt{-1} \\
    \sqrt{-1} & 0
\end{pmatrix}}, \quad
I_4 = {\begin{pmatrix}
    \sqrt{-1} & 0 \\
    0 & -\sqrt{-1}
\end{pmatrix}}.
\]
Let $L$ be the total space of $\oo_Y(1)\oplus\oo_Y(1)$, and let 
\[
\Lambda = \left( I_1{\begin{pmatrix}
    s_1 \\
    s_2 \end{pmatrix}},
    I_2{\begin{pmatrix}
    s_1 \\
    s_2 \end{pmatrix}},
    I_3{\begin{pmatrix}
    s_1 \\
    s_2 \end{pmatrix}},
    I_4{\begin{pmatrix}
    s_1 \\
    s_2 \end{pmatrix}}
    \right).
\]
Thus, the fiber of $\Lambda\subset L$ over any point in $\mathbb{P}^1$
is a lattice which has rank $4$. 
Consider the quotient $X = L/\Lambda$, and let $\sigma\colon X\to Y$ be the natural projection. Then $X$ is a compact complex manifold, and every fiber of $\sigma$ is a (smooth) complex torus. 
Hence, $\sigma\colon X\to Y$ is an lc-trivial fibration whose discriminant $\mathbb{Q}$-divisor $\Delta$ vanishes.  
From~\mbox{\cite[Proposition 10]{Atiyah}} it follows that 
\[
K_X \sim \sigma^* (K_{Y}-2Q),
\] 
where $Q\in \mathbb{P}^1$ is a point. Thus the moduli $\mathbb{Q}$-divisor $M\sim -2Q$ is not pseudo-effective, and so it is not b-nef. 
\end{example}

Example \ref{exam-atiyah} shows that Conjecture \ref{conj-analytic-PS} has no chance to be true in the relative dimension at least $2$ for arbitrary compact complex manifolds. However, it would be interesting to figure out if it holds when the total space of the fibration is K\"ahler.

\section{Equivariant fibrations}

In this section, we study bimeromorphic automorphisms of complex manifolds which preserve the structure of a fibration and act on the base of the fibration biregularly.

\begin{definition}
Let $\sigma\colon X\to Y$ be a fibration from a compact complex manifold to a normal projective variety. Put
\[
\mathrm{Bim}(X;\sigma) = \{ \alpha\in \mathrm{Bim}(X)\ |\ \sigma\ \text{is}\ \alpha\text{-equivariant}\,\}.
\] 
Then there is a natural homomorphism 
$
\overline{\sigma}\colon \mathrm{Bim}(X;\sigma)\to \mathrm{Bim}(Y).
$
Put 
\[
\mathrm{Bim}_{\mathrm{reg}}(X;\sigma) = \{ \alpha\in \mathrm{Bim}(X;\sigma)\ |\ \overline{\sigma}(\alpha)\in\mathrm{Aut}(Y)\,\}.
\] 
\end{definition}

By construction, we have a homomorphism 
\[
\overline{\sigma}\colon \mathrm{Bim}_{\mathrm{reg}}(X;\sigma)\to \mathrm{Aut}(Y).
\]
We will work in the following setting.

\begin{assumptions}
\label{setting-equivariant-fibrations}
Assume that
\begin{enumerate}
\item 
$\sigma\colon (X,R)\to Y$ is an lc-trivial fibration from a compact complex manifold to a normal projective variety whose typical fiber is an elliptic curve,  
\item 
$R$ is a $\sigma$-vertical and anti-effective $\mathbb{Q}$-divisor,
\item
$R$ is a simple normal crossing $\mathbb{Q}$-divisor.
\end{enumerate}
\end{assumptions}

Note that the third condition can always be obtained by passing to a higher model of $X$. 
Recall the definition of the discriminant $\mathbb{Q}$-divisors $\Delta(\sigma)=\Delta(\sigma,0)$ and $\Delta(\sigma,R)$ as in \eqref{eq-defin-discriminant-divisor}. 
The main goal of this section if to prove the following.

\begin{proposition}
\label{prop-delta-prime-is-invariant}
Under Assumptions~\ref{setting-equivariant-fibrations},  there exists a divisor $R'$ on $X$ such that~\mbox{$R'\sim_{\mathbb{Q}} R/Y$}, $R'\geq R$, and $\Delta(\sigma,R')$ is a $\overline{\sigma}(\mathrm{Bim}_{\mathrm{reg}}(X;\sigma))$-invariant divisor on~$Y$.
\end{proposition}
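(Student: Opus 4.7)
The plan is to look for $R'$ in the form $R + \sigma^{*}D$ for a suitable effective $\QQ$-divisor $D$ on $Y$, and use the semi-additivity property (Remark~\ref{rem-semi-additivity}) to identify $\Delta(\sigma, R')$ with $\Delta(\sigma, R) + D$. The cleanest choice is to pick $D$ so that this equals the discriminant of the fibration with trivial boundary, $\Delta_0 := \Delta(\sigma, 0)$, which I will show is already invariant under $\Gamma := \overline{\sigma}(\mathrm{Bim}_{\mathrm{reg}}(X;\sigma))$. With such a $D$, the conditions $R'\sim_{\QQ}R/Y$ and $R'\geq R$ are automatic once $D\geq 0$.

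First I would establish the $\Gamma$-invariance of $\Delta_0$. Given $\beta\in \Gamma$ and a lift $\alpha\in \mathrm{Bim}_{\mathrm{reg}}(X;\sigma)$, the relation $\sigma\circ\alpha = \beta\circ\sigma$ together with the fact that the indeterminacy locus of a bimeromorphism has codimension at least $2$ implies that $\alpha$ restricts to a biholomorphism between neighborhoods of $\sigma^{-1}(y)$ and $\sigma^{-1}(\beta(y))$ for general $y$ in any prime divisor $Z\subset Y$, carrying $\sigma^{*}Z$ to $\sigma^{*}\beta(Z)$. Hence $\mathrm{lct}(X;\sigma^{*}Z)$ computed over the generic point of $Z$ agrees with $\mathrm{lct}(X;\sigma^{*}\beta(Z))$ computed over the generic point of $\beta(Z)$, which is exactly the identity $\beta^{*}\Delta_0 = \Delta_0$.

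Next I would compare $\Delta := \Delta(\sigma, R)$ with $\Delta_0$. Since $R\leq 0$, for every prime divisor $Z\subset Y$ the pair $(X, R + t\sigma^{*}Z)$ has coefficients no larger than those of $(X, t\sigma^{*}Z)$, so adding $R$ can only raise log discrepancies, giving $\mathrm{lct}(X, R;\sigma^{*}Z)\geq \mathrm{lct}(X;\sigma^{*}Z)$ and hence $\Delta\leq \Delta_0$ coefficient-wise. Thus $D := \Delta_0 - \Delta$ is effective, and $R' := R + \sigma^{*}D$ satisfies the numerical conditions required by the proposition. Once $(X, R')$ is known to be lc, the semi-additivity property of Remark~\ref{rem-semi-additivity} yields $\Delta(\sigma, R') = \Delta + D = \Delta_0$, which is $\Gamma$-invariant by the previous step.

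The main technical point — and the main obstacle in the argument — is checking that $(X, R')$ is an lc sub-pair, which is the hypothesis needed to invoke semi-additivity. I would exploit the SNC assumption on $R$ (Assumption~\ref{setting-equivariant-fibrations}(3)) and smoothness of $X$: after a suitable log resolution $(X, R')$ acquires SNC support and lc-ness reduces to a coefficient bound. For a prime divisor $F\subset X$ mapping onto a prime divisor $Z = \sigma(F)\subset Y$ with multiplicity $m_F = \mathrm{mult}_F\sigma^{*}Z$ one has $\mathrm{coeff}_F R' = \mathrm{coeff}_F R + d_Z m_F$, where $d_Z = \mathrm{coeff}_Z D$. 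The SNC formula $\mathrm{lct}(X, R;\sigma^{*}Z) = \min_{F'}(1 - \mathrm{coeff}_{F'}R)/m_{F'}$ (the minimum taken over components of the fiber over the generic point of $Z$), together with the bound $d_Z\leq \mathrm{lct}(X, R;\sigma^{*}Z)$ built into the definition of $D$, yields $d_Z\leq (1 - \mathrm{coeff}_F R)/m_F$, which rearranges precisely to $\mathrm{coeff}_F R'\leq 1$ and finishes the proof.
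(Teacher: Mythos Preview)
Your argument has a genuine gap at the very first step: the divisor $\Delta_0=\Delta(\sigma,0)$ is \emph{not} $\Gamma$-invariant in general. This is exactly the content of Example~\ref{exam-delta-2} in the paper. There one blows up the node of a singular fiber $F_1$ of a relatively minimal elliptic surface $X'\to\PP^1$ admitting an automorphism $\alpha_{X'}$ swapping $F_1$ and $F_2$; on the blow-up $X$ one computes $\Delta(\sigma,0)=\tfrac12 P_1$, which is not preserved by $\overline{\sigma}(\alpha)$. The flaw in your reasoning is the sentence ``the indeterminacy locus of a bimeromorphism has codimension at least $2$ implies that $\alpha$ restricts to a biholomorphism between neighborhoods of $\sigma^{-1}(y)$ and $\sigma^{-1}(\beta(y))$ for general $y$ in any prime divisor $Z$''. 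The indeterminacy loci of $\alpha$ and $\alpha^{-1}$ indeed have codimension $\ge 2$ in $X$, but since $\sigma$ has relative dimension $1$, their images in $Y$ can have codimension $1$; in the example above the exceptional curve $E\subset X$ (over which $\alpha$ fails to be biholomorphic) sits entirely over the point $P_1$, so your claimed local isomorphism fails precisely along the divisor $Z=\{P_1\}$. Consequently the log canonical thresholds $\lct(X;\sigma^*Z)$ need not match $\lct(X;\sigma^*\beta(Z))$, and your $D=\Delta_0-\Delta$ produces an $R'$ whose discriminant is not $\Gamma$-invariant.

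The paper's construction (Construction~\ref{construction-xi}) instead takes $R'_1$ to be the \emph{maximal} anti-effective perturbation $R_1+\sigma^*D$, and shows that the resulting discriminant $\Xi$ equals $\Delta_{\mathrm{mult}}+\Delta_{\mathrm{mon}}$ (Proposition~\ref{prop-monodromy-description}). Both summands are defined in terms of multiplicities of fibers and monodromy around divisors, and these quantities \emph{are} bimeromorphic invariants of the fibration (Propositions~\ref{prop-mult-is-preserved} and~\ref{prop-mon-is-preserved}); this is what replaces your failed local-isomorphism argument. Equivalently, the paper restricts to a general curve in $Y$, passes to the relative minimal model of the resulting elliptic surface, and uses that on a relatively minimal elliptic surface bimeromorphic self-maps are biregular (Lemma~\ref{lem-on-surfaces-everything-is-biregular}). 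Your approach could be repaired only if you replace $\Delta_0$ by this $\Xi$, but then the work lies precisely in proving its invariance, which is the substance of the paper's argument.
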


We start with recalling the following well known result on bimeromorphic maps between relatively minimal elliptic fibrations from a surface to a curve.

\begin{lemma}
\label{lem-on-surfaces-everything-is-biregular}
Let $\sigma\colon S\to C$ and $\sigma'\colon S'\to C'$ be relatively minimal elliptic fibrations, where $S$, $S'$ are compact complex surfaces, and $C$, $C'$ are (smooth) curves. 
Let~\mbox{$\alpha\colon S\dashrightarrow S'$} be a bimeromorphic map. Assume that there exists an  isomorphism~\mbox{$\beta\colon C\to C'$} such that 
$$
\beta\circ \sigma = \sigma' \circ \alpha.
$$
Then $\alpha$ is an isomorphism. 
\end{lemma}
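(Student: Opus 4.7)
The plan is to resolve the indeterminacy of $\alpha$ on a smooth compact complex surface $T$ equipped with bimeromorphic morphisms $p\colon T\to S$ and $q\colon T\to S'$ satisfying $q=\alpha\circ p$ where both sides are defined. Since any bimeromorphic morphism between smooth compact complex surfaces factors as a finite composition of blow-ups of points, see e.g.\ \cite[Ch.~III]{BHPVdV04}, one may repeatedly contract any $(-1)$-curve that happens to be exceptional for both $p$ and $q$; after this reduction we may assume that no irreducible curve on $T$ is contracted by both morphisms. The hypothesis $\beta\circ\sigma=\sigma'\circ\alpha$ translates into the identity $\sigma'\circ q=\beta\circ\sigma\circ p$ on $T$.

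Suppose, towards a contradiction, that $\alpha$ is not a biholomorphism. Then at least one of $p$, $q$ fails to be an isomorphism, and without loss of generality we may assume the former. Choose a $(-1)$-curve $E\subset T$ realised as the exceptional divisor of the last blow-up in a factorization of $p$. By the arrangement made above, $E$ is not $q$-exceptional, so $q|_E\colon E\to q(E)$ is a birational morphism between smooth rational curves, hence an isomorphism; in particular $q(E)\cong\PP^1$. Since $\sigma\circ p$ contracts $E$ to a point, $q(E)$ lies in a fiber of $\sigma'$. Tracking self-intersections along the factorization of $q$ into blow-ups yields the equality $q(E)^2 = E^2 + k = -1 + k$, where $k\ge 0$ counts the centers of the blow-ups comprising $q$ that lie on the successive strict transforms of $q(E)$.

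The contradiction is produced by Zariski's lemma for $\sigma'$: every irreducible component of a fiber of $\sigma'$ has non-positive self-intersection, and self-intersection zero occurs only when the component supports the whole fiber, possibly as a multiple. Combined with $q(E)^2\ge -1$, only two cases arise. If $q(E)^2=-1$, then $q(E)$ is a $(-1)$-curve lying in a fiber of $\sigma'$, which directly contradicts the relative minimality of $\sigma'$. If $q(E)^2=0$, then the fiber of $\sigma'$ containing $q(E)$ equals $F=m\cdot q(E)$ for some $m\ge 1$; combining $F^2=0$ with the arithmetic genus $p_a(F)=1$ of an elliptic fiber yields $F\cdot K_{S'}=0$, hence $q(E)\cdot K_{S'}=0$, which is incompatible with the adjunction formula $q(E)^2+q(E)\cdot K_{S'}=-2$ on $q(E)\cong\PP^1$. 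Thus $\alpha$ must be an isomorphism. The main technical point is to confirm that the surface-theoretic tools invoked, namely factorization of bimeromorphic morphisms into point blow-ups, Castelnuovo's contraction criterion, and Zariski's lemma on negative semi-definiteness of the intersection form on a fiber, are available for compact complex surfaces without any projectivity or K\"ahler hypothesis; all three are treated in this generality in \cite[Ch.~III]{BHPVdV04}, so no genuinely new input is required.
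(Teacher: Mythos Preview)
The paper states this lemma as well known and does not supply a proof, so there is no argument in the paper to compare against. Your approach---resolve the graph, factor both morphisms into point blow-ups, eliminate curves contracted by both, then feed a surviving $(-1)$-curve into Zariski's lemma---is the standard one and is correct in outline.

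One step should be tightened. You assert that $q(E)$ is a \emph{smooth} rational curve and record the formula $q(E)^2=-1+k$ with $k$ the number of blow-up centers lying on the strict transforms. In general the image of a smooth curve under a blow-down can be singular, and the correct formula is $q(E)^2=-1+\sum_i m_i^2$, where $m_i$ is the multiplicity of the $i$-th center on the corresponding strict transform. This does not damage the inequality $q(E)^2\ge -1$, and once Zariski's lemma forces $q(E)^2\le 0$ one obtains $\sum_i m_i^2\le 1$; hence each $m_i\in\{0,1\}$, all intermediate strict transforms stay smooth, and in particular $q(E)\cong\PP^1$. With this remark added, your adjunction computation in the case $q(E)^2=0$ is valid. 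Without it one cannot immediately rule out the possibility that $q(E)$ is a nodal or cuspidal rational curve constituting an entire fiber of type $I_1$ or $II$, for which $q(E)^2+q(E)\cdot K_{S'}=2p_a(q(E))-2=0$ would be perfectly consistent.

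A slicker variant avoids this bookkeeping altogether: on a relatively minimal elliptic surface $K_{S'}$ is numerically a pull-back from $C'$ (e.g.\ by the canonical bundle formula), so $K_{S'}\cdot q(E)=0$; writing $K_T=q^*K_{S'}+\sum a_iF_i$ with $a_i\ge 1$ and $F_i$ the $q$-exceptional curves, and using $E\neq F_i$, one gets $K_T\cdot E\ge 0$, contradicting $K_T\cdot E=-1$ directly.
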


Under Assumptions~\ref{setting-equivariant-fibrations}, we define the following divisor on $Y$. 
Denote by $\Delta_{\mathrm{sing}}$ the maximal reduced effective divisor on $Y$ such that the preimage via $\sigma$ of a typical point in each component of $\Delta_{\mathrm{sing}}$ is singular. 

\begin{corollary}
\label{cor-delta-is-preserved-on-surface}
Let $\sigma\colon S\to C$ be a relatively minimal elliptic fibration where $S$ is a smooth compact complex surface and $C$ is a curve.   
Let $\Delta=\Delta(\sigma)$ be the discriminant $\mathbb{Q}$-divisor.  
Then 
\begin{enumerate}
\item 
$\mathrm{Bim}(S;\sigma)=\mathrm{Aut}(S;\sigma)$, 
\item 
%$\Delta\leq \Delta'$,
%\item 
$\Delta$ and $\Delta_{\mathrm{sing}}$ are  $\overline{\sigma}(\mathrm{Aut}(S;\sigma))$-invariant. 
\end{enumerate}
\end{corollary}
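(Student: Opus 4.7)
The plan is to deduce part (1) directly from Lemma~\ref{lem-on-surfaces-everything-is-biregular}, and then use (1) together with the biholomorphic invariance of log-canonical thresholds and of the locus of singular fibers to obtain part (2).

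For part (1), I would start with an arbitrary $\alpha\in\mathrm{Bim}(S;\sigma)$. By definition, there is a bimeromorphic self-map $\beta\colon C\dashrightarrow C$ with $\beta\circ\sigma=\sigma\circ\alpha$. Since $C$ is a smooth (compact) curve, any such $\beta$ is automatically biholomorphic, so $\beta\in\mathrm{Aut}(C)$. Now $\sigma\colon S\to C$ is a relatively minimal elliptic fibration (on both sides), so Lemma~\ref{lem-on-surfaces-everything-is-biregular} applies and gives $\alpha\in\mathrm{Aut}(S)$. Hence $\alpha\in\mathrm{Aut}(S;\sigma)$, proving the inclusion $\mathrm{Bim}(S;\sigma)\subset\mathrm{Aut}(S;\sigma)$; the reverse inclusion is obvious. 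In particular, $\overline{\sigma}(\mathrm{Aut}(S;\sigma))$ is a well-defined subgroup of $\mathrm{Aut}(C)$.

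For part (2), fix $\alpha\in\mathrm{Aut}(S;\sigma)$ and set $\beta=\overline{\sigma}(\alpha)\in\mathrm{Aut}(C)$. For any point $P\in C$, $\alpha$ restricts to a biholomorphism from an analytic neighborhood of the scheme-theoretic fiber $\sigma^{*}P$ onto an analytic neighborhood of $\sigma^{*}\beta(P)$, carrying the divisor $\sigma^{*}P$ to $\sigma^{*}\beta(P)$ (as $\sigma$ is $\alpha$-equivariant, multiplicities of components are preserved). Since the log-canonical threshold is a biholomorphic invariant, this yields
\[
\mathrm{lct}(S;\sigma^{*}P)=\mathrm{lct}(S;\sigma^{*}\beta(P)),
\]
so the coefficient of $P$ in $\Delta$ equals the coefficient of $\beta(P)$ in $\Delta$, whence $\beta_{*}\Delta=\Delta$. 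The same biholomorphism argument shows that the fiber over $P$ is singular if and only if the fiber over $\beta(P)$ is singular, so $\beta_{*}\Delta_{\mathrm{sing}}=\Delta_{\mathrm{sing}}$.

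The only delicate point is ensuring that Lemma~\ref{lem-on-surfaces-everything-is-biregular} is applicable, which requires checking that the induced map on $C$ is already biholomorphic; this reduces to the elementary fact that bimeromorphic self-maps of a smooth compact curve are automorphisms. Beyond this, both parts are essentially formal consequences of the definitions together with the biholomorphic invariance of $\mathrm{lct}$ and of singularity of fibers, so I expect no further genuine obstacle.
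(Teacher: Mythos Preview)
Your proposal is correct and matches the paper's intended approach: the paper states this result as a corollary of Lemma~\ref{lem-on-surfaces-everything-is-biregular} with no separate proof, and your argument spells out precisely the implicit reasoning (bimeromorphic self-maps of a smooth compact curve are biregular, then apply the lemma for part~(1); biholomorphic invariance of $\mathrm{lct}$ and of fiber singularity for part~(2)).
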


Example \ref{exam-delta-1} and Example \ref{exam-delta-2} below show that the group $\overline{\alpha}(\mathrm{Bim}_{\mathrm{reg}}(X;\sigma))$ may not preserve $\Delta$ and $\Delta_{\mathrm{sing}}$ if $\sigma$ is not relatively minimal. Furthermore, Example \ref{exam-delta-3} shows that $\Delta(\sigma, R)$ may be not preserved by $\overline{\alpha}(\mathrm{Bim}_{\mathrm{reg}}(X;\sigma))$ even if $\sigma$ is relatively minimal.

\begin{example}
\label{exam-delta-1}
Put 
$\pi\colon X'=C\times\mathbb{P}^1\to\mathbb{P}^1$, 
where $C$ is an elliptic curve and $\pi$ is the projection. Let $\alpha\in\mathrm{Aut}(X')$ be an element which acts trivially on the first factor of $X'=C\times\mathbb{P}^1$, and as an element of order $2$ on the second factor. Hence $\pi$ is $\alpha$-equivariant. Let $P$ be a point on $\mathbb{P}^1$, and let $F=\pi^{-1}(P)$. Blow up a point $Q\in F$ to obtain a morphism $f\colon X\to X'$. It induces a fibration 
$$
\sigma=\pi \circ f\colon X\to \mathbb{P}^1
$$ 
such that $\sigma^{-1}(P)$ is a reducible fiber. 
Hence $\Delta_{\mathrm{sing}}=P$. 
Note that $\alpha$ lifts to an element of $\mathrm{Bim}_{\mathrm{reg}}(X;\sigma)$, but $\overline{\sigma}(\alpha)$ does not preserve $\Delta_{\mathrm{sing}}$.
\end{example}

\begin{example}
\label{exam-delta-2}
Let 
$
\pi\colon X'\to\mathbb{P}^1
$
be a relatively minimal elliptic fibration with only nodal singular fibers. Let $F_1$ and $F_2$ be two such singular fibers. Put~\mbox{$\pi(F_i)=P_i\in \mathbb{P}^1$} for $i=1,2$.   
Suppose that there exists an automorphism $\alpha\in \Aut(X)$ which interchanges $F_1$ and~$F_2$. Blow up the node in $F_1$ to obtain a morphism $f\colon X\to X'$. Note that we have 
$$
f^*F_1= F'_1 + 2E,
$$ 
where $F'_1$ is the proper transform of $F_1$ and $E$ is the $f$-exceptional curve.  For the induced fibration $\sigma\colon X\to \mathbb{P}^1$ we  have that $\Delta(\sigma)=\frac{1}{2} P_1$ which is not $\overline{\sigma}(\alpha)$-invariant. 
\end{example}

\begin{example}
\label{exam-delta-3}
Put 
$
\sigma\colon X=C\times\mathbb{P}^1\to\mathbb{P}^1,
$
where $C$ is an elliptic curve and~$\sigma$ is the projection. 
Let $\alpha\in\mathrm{Aut}(X)$ be an element which acts trivially on the first factor of $X=C\times\mathbb{P}^1$, and as an element of order $2$ on the second factor.  
Put~\mbox{$R=-F$}, where $F$ is a fiber of $\sigma$, and $\sigma(F)=P\in\mathbb{P}^1$. Then $\sigma\colon(X,R)\to \mathbb{P}^1$ is an lc-trivial fibration. Note that $\Delta(\sigma,R)=-P$ is not $\overline{\sigma}(\alpha)$-invariant. 
\end{example}

Now we construct a divisor $R'$ as in Proposition \ref{prop-delta-prime-is-invariant}. 

\begin{construction}
\label{construction-xi}
Under Assumptions~\ref{setting-equivariant-fibrations}, decompose $R$ as follows: 
\[
R=R_1+R_2,
\]
where $\mathrm{codim}_Y\sigma(R_1)=1$ and  $\mathrm{codim}_Y\sigma(R_2)\geq 2$. 
Note that we have
$$
\Delta(\sigma, R)=\Delta(\sigma,R_1).
$$

Let $R'_1$ be a $\QQ$-divisor on $X$ of the form $R_1+\sigma^* D$, 
where $D$ is an effective $\mathbb{Q}$-divisor on $Y$, 
such that $R'_1$ is a maximal anti-effective $\mathbb{Q}$-divisor 
with respect to pull-backs of $\mathbb{Q}$-effective divisors from $Y$. This means that $R'_1$ is anti-effective, and $R'_1+\sigma^* D$ is not anti-effective for any effective $\mathbb{Q}$-divisor $D$ on $Y$. 
Put 
$$
R'=R'_1+R'_2,
$$
where~\mbox{$R'_2=R_2$}. 
Since $R'-R=\sigma^* D$, we have $R'\sim_{\mathbb{Q}} R/Y$ and $R'\geq R$. 
Note that $(X, R')$ is an lc sub-pair since so is $(X, R)$ and $R$ is a simple normal crossing $\mathbb{Q}$-divisor. Hence, $\sigma\colon (X,R')\to Y$ is an lc-trivial fibration. 
Put 
\begin{equation}
\label{eq-defin-xi}
\Xi=\Delta(\sigma, R')=\Delta(\sigma, R'_1).
\end{equation}
Since $R'-R=\sigma^* D$, by Remark \ref{rem-semi-additivity} we have $\Delta(\sigma, R) \leq \Xi$. 
\end{construction}

We are going to show that $\Xi$ is  $\overline{\sigma}(\mathrm{Bim}_{\mathrm{reg}}(X; \sigma))$-invariant. 

\begin{example}
We have $\Xi=0$ in Examples \ref{exam-delta-1}, \ref{exam-delta-2}, and \ref{exam-delta-3}.
\end{example}

The next proposition is not necessary for the proof of the main result. However, we find that it clarifies the structure of the divisor $\Xi$ (see, in particular, Corollary~\ref{not-important-corollary} below). The reader who is interested only in the proof of the main result is advised to skip to Definition~\ref{def-mult-mon}. 

\begin{proposition}
\label{prop-xi-is-effective}
Let $R'=R'_1+R'_2$ be as in Construction \ref{construction-xi}.  
Then 
\begin{enumerate}
\item 
$R'_1$ and $\Xi$ depend only on the fibration $\sigma\colon X\to Y$, 
\item 
$\Xi$ is effective. 
\end{enumerate}
\end{proposition}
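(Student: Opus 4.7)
For part (2) the plan is to exploit the maximality of $R'_1$ directly. Near a typical point of a prime divisor $Z \subset Y$, write $\sigma^* Z = \sum_i m_i F_i$, and let $b_i \leq 0$ denote the coefficient of $F_i$ in $R'_1$; since $R'_1$ is simple normal crossing and anti-effective, $(X, R'_1)$ is a sub-lc pair, and the log-canonical threshold over a typical point of $Z$ is computed from coefficients directly. The maximality of $R'_1$ forces $b_j = 0$ for at least one index $j$, since otherwise we could add $\epsilon Z$ to $D$ for small $\epsilon > 0$ and keep $R'_1 + \epsilon \sigma^* Z$ anti-effective, contradicting the choice of $D$. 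A local computation then gives
\[
\mathrm{lct}(X, R'_1; \sigma^* Z) = \min_i \frac{1 - b_i}{m_i} \leq \frac{1}{m_j} \leq 1,
\]
so that the coefficient $1 - \mathrm{lct}(X, R'_1; \sigma^* Z)$ of $Z$ in $\Xi$ is non-negative, proving $\Xi \geq 0$.

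For part (1) I would first establish that any two admissible $R$ and $\tilde R$ satisfying Assumptions~\ref{setting-equivariant-fibrations} differ by the pullback of a $\mathbb{Q}$-divisor from $Y$. Subtracting the relations $K_X + R \sim_{\mathbb{Q}} 0/Y$ and $K_X + \tilde R \sim_{\mathbb{Q}} 0/Y$ yields $R - \tilde R \sim_{\mathbb{Q}} \sigma^* E$ for some $\mathbb{Q}$-divisor $E$ on $Y$, so there exist an integer $N > 0$ and a meromorphic function $f$ on $X$ with $N(R - \tilde R - \sigma^* E) = (f)$. The divisor $(f)$ is then $\sigma$-vertical, hence on a typical elliptic fiber $\sigma^{-1}(y)$ the restriction $f|_{\sigma^{-1}(y)}$ has trivial divisor. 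A non-vanishing meromorphic function on a compact Riemann surface is constant, so $f$ descends to a meromorphic function $g$ on $Y$, and $R - \tilde R = \sigma^*\bigl(E + \tfrac{1}{N}(g)\bigr)$.

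Once this descent is in hand, the construction of $R'_1$ becomes insensitive to translating $R$ by a pullback: since $\sigma^* F$ for a $\mathbb{Q}$-divisor $F$ on $Y$ has no components whose image has codimension $\geq 2$, the decomposition in Construction~\ref{construction-xi} gives $R_2 = \tilde R_2$ and $R_1 = \tilde R_1 + \sigma^* F$. The maximization problem on each prime $Z \subset Y$ then shifts by $-F$, so the maximal effective pullbacks satisfy $D = \tilde D - F$ coefficient-wise (and in particular anti-effectivity of $R_1$ forces $F(Z) \leq \tilde D(Z)$, ensuring $D \geq 0$). Consequently $R'_1 = R_1 + \sigma^* D = \tilde R_1 + \sigma^* \tilde D = \tilde R'_1$, and therefore $\Xi = \Delta(\sigma, R'_1)$ also depends only on $\sigma$. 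The main technical obstacle is the descent step: that a meromorphic function on $X$ with $\sigma$-vertical principal divisor comes from $Y$. In the analytic setting this is not completely formal, and it relies crucially on the typical fiber being a compact complex curve whose only holomorphic functions are the constants; granted this, the rest of the argument reduces to bookkeeping with coefficients of $\sigma$-vertical divisors.
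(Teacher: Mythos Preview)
Your argument is correct and takes a genuinely different route from the paper's.

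For part~(2), the paper restricts to a general complete-intersection curve $C\subset Y$, passes to the preimage surface $S$ and its relative minimal model $T\to C$, shows that the pushforward $R'_T$ vanishes (using maximality), and then deduces effectiveness from $\Xi|_C=\Delta(\sigma_T,0)\geq 0$. Your argument stays on $X$: maximality forces some component $F_j$ of $\sigma^*Z$ to have coefficient $b_j=0$ in $R'_1$, and the log discrepancy of $F_j$ alone gives $\mathrm{lct}(X,R'_1;\sigma^*Z)\leq 1/m_j\leq 1$. One caveat: your displayed equality $\mathrm{lct}=\min_i(1-b_i)/m_i$ is not justified, since $\sigma^*Z$ need not be snc over a typical point of $Z$; but only the inequality $\mathrm{lct}\leq(1-b_j)/m_j$ is used, and that holds unconditionally.

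For part~(1), the paper again uses the curve restriction and identifies $R'_S$ via the crepant pullback $\phi^*K_T=K_S+R'_S$ from the minimal model, giving an intrinsic formula for $R'_1$ in terms of $\sigma$. You instead show that two admissible $R,\tilde R$ differ by a pullback $\sigma^*F$, whence the maximization defining $R'_1$ shifts by $-F$ and yields the same output. The descent step you flag (a meromorphic $f$ with vertical $(f)$ comes from $Y$) is indeed the only non-formal point; it follows from $\sigma_*\mathcal{O}_X=\mathcal{O}_Y$ together with Remmert's proper mapping theorem applied to the image of the graph of $f$ under $\sigma\times\mathrm{id}$ in $Y\times\mathbb{P}^1$.

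Your approach is more elementary---it avoids hyperplane slicing and relative minimal models entirely, reducing both parts to coefficient bookkeeping on vertical divisors. The paper's approach, by contrast, produces as a by-product the identification $\Xi|_C=\Delta(\sigma_T)$ with the discriminant of the minimal elliptic surface, which is what makes the monodromy description in Proposition~\ref{prop-monodromy-description} transparent.
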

\begin{proof} 
We will show that the coefficients of $R'_1$ are uniquely determined in terms of some geometric data associated with the fibration $\sigma\colon X\to Y$. 
Let $C$ be a smooth curve which is an intersection of $\dim X-2$ general very ample divisors on $Y$, and let $S$ be the preimage of $C$ via $\sigma$. Put~\mbox{$\sigma_S=\sigma|_S$}. Then 
$$
\sigma_S\colon S\to C
$$ 
is a (not necessarily relatively minimal) elliptic fibration. 
By adjunction, we have  
$$
K_S+R'_S\sim_{\mathbb{Q}}0/C,
$$
where 
$$
R'_S=R'|_S=R'_1|_S;
$$ 
here the last equality holds since $\mathrm{codim}_Y\sigma(R'_2)\geq 2$. 
Inductively applying Lemma~\ref{lem-delta-restricted}, we see that 
$$
\Delta(\sigma,R')|_C=\Delta(\sigma,R'_1)|_C=\Delta(\sigma_S,R'_S).
$$

Let $\phi\colon S\to T$ be the relatively  minimal model of $S$ over $C$. Thus, there is a relatively minimal elliptic fibration 
\[
\sigma_T\colon T\to C.
\]
We have 
$$
\phi_*(K_S+R'_S)=K_T+R'_T\sim_{\mathbb{Q}} 0/C
$$
and 
\begin{equation}
\label{eq-S-T}
K_S+R'_S=\phi^*(K_T+R'_T).
\end{equation}  
It follows that  $\Delta(\sigma_T,R'_T)=\Delta(\sigma_S,R'_S)$. 

Since $\sigma_T\colon T\to C$ is relatively minimal, we have $K_T\sim_{\mathbb{Q}}0/C$. 
Hence, $R'_T\sim_{\mathbb{Q}}0/C$. 
We claim that $R'_T=0$. 
Indeed, suppose that $R'_T\neq 0$. 
We claim that under this assumption $-R'_1\geq \sigma^*D$ for some non-zero effective divisor $D$ on $Y$. 
Since $R'_1$ is anti-effective, $R'_S$ and $R'_T$ are anti-effective as well. Then $-R'_T\geq \sigma_T^* D_C$ for some non-zero effective $\mathbb{Q}$-divisor $D_C$ on~$C$. 
It follows from~\eqref{eq-S-T} that 
\begin{equation}
\label{eq-last-inequality}
-R'_S\geq \sigma_S^* D_C.
\end{equation}
Write $R'_1=\sum a_i D_i$, where $D_i$ are $\sigma$-vertical prime divisors on $X$, $a_i\in \mathbb{Q}$, and~$a_i\leq 0$. We know from \eqref{eq-last-inequality} that 
\begin{equation}
\label{eq-last-equality}
-R'_S = -R'_1|_S\geq \sigma^* D_C.
\end{equation}
Let $R''_1=\sum a_i D_i$ be the sum of the components of $R'_1$ such that $\sigma(D_i|_S)$ is contained in the support of $D_C$.  From~\eqref{eq-last-equality} we get  $-R''_1\geq \epsilon \sigma^*(\sigma(R''_1))$ for some $0<\epsilon\ll 1$. 
Then
\[
-R'_1\geq -R''_1\geq \epsilon \sigma^*(\sigma(R''_1))> 0.
\]
A contradiction with the choice of $R'_1$ shows that $R'_T=0$. 

Thus, the divisor $R'_S\sim_{\mathbb{Q}}-K_S$ is defined uniquely by the formula 
\[
\phi^*K_T=K_S+R'_S. 
\]
Since $R'_1|_S=R'_S$, it follows that $R'_1$ is uniquely determined by the minimal model of~$S$, and hence $R'_1$ depends only on the fibration $\sigma\colon X\to Y$. 
Furthermore, since 
$$
\Xi=\Delta(\sigma, R'_1),
$$
we see that $\Xi$ depends only on the fibration $\sigma\colon X\to Y$.  

Finally, we show that $\Xi$ is effective. Observe that $\Delta(\sigma_T,R'_T)=\Delta(\sigma_T,0)$ is effective, and hence 
$$
\Xi|_C=\Delta(\sigma,R'_1)|_C=\Delta(\sigma_S, R'_S)=\Delta(\sigma_T,R'_T)
$$ 
is effective.
Therefore, $\Xi$ is effective as well. 
\end{proof}

\begin{corollary}
\label{prop-delta-leq-xi}
Under Assumptions~\ref{setting-equivariant-fibrations}, 
suppose that $R=R_1+R_2$, where 
$\mathrm{codim}_Y\sigma(R_1)=1$,   $\mathrm{codim}_Y\sigma(R_2)\geq 2$, 
and $R_1$ is maximal anti-effective $\mathbb{Q}$-divisor on~$X$ with respect to pull-back of effective divisors from $Y$. Then $\Delta(\sigma,R)=\Xi$.
\end{corollary}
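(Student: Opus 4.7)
The plan is to unpack Construction~\ref{construction-xi} and observe that the corollary is an immediate consequence once the maximality hypothesis on $R_1$ is parsed correctly. First I would recall that, by \eqref{eq-defin-xi}, one has $\Xi = \Delta(\sigma, R'_1)$, where $R'_1 = R_1 + \sigma^* D$ for some effective $\mathbb{Q}$-divisor $D$ on $Y$ chosen so that $R'_1$ is anti-effective and maximal with respect to further addition of pullbacks of effective divisors from $Y$. On the other side, the opening of Construction~\ref{construction-xi} records that $\Delta(\sigma,R) = \Delta(\sigma,R_1)$, because the components of $R_2$ map to subsets of $Y$ of codimension at least $2$ and hence do not affect the log canonical thresholds computed at typical points of prime divisors on $Y$.

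The key step is then to identify $R'_1$ with $R_1$. The hypothesis of the corollary is precisely that $R_1 + \sigma^* E$ fails to be anti-effective for every non-zero effective $\mathbb{Q}$-divisor $E$ on $Y$. Since $R'_1 = R_1 + \sigma^* D$ is anti-effective by construction, applying this maximality with $E = D$ forces $D = 0$, so $R'_1 = R_1$. Combining the two observations yields
\[
\Xi = \Delta(\sigma, R'_1) = \Delta(\sigma, R_1) = \Delta(\sigma, R),
\]
which is the desired equality.

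I do not expect any genuine obstacle: the statement is essentially tautological once the maximality hypothesis on $R_1$ is matched with the defining property of $R'_1$ in Construction~\ref{construction-xi}. In particular, there is no need to invoke the deeper content of Proposition~\ref{prop-xi-is-effective}, which asserts that such a maximal $R'_1$ is determined uniquely by the fibration $\sigma$ alone; here we simply use the one direction that any candidate for $R_1$ satisfying the stated maximality must itself be that uniquely determined divisor.
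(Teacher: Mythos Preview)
Your proof is correct and matches the paper's implicit reasoning: the corollary is stated in the paper without proof, precisely because it is tautological once one observes that the maximality hypothesis on $R_1$ forces $D=0$ in Construction~\ref{construction-xi}, so that $R'_1=R_1$ and hence $\Xi=\Delta(\sigma,R'_1)=\Delta(\sigma,R_1)=\Delta(\sigma,R)$. Your closing remark is also accurate: the uniqueness established in Proposition~\ref{prop-xi-is-effective} is not needed here.
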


Combining Construction \ref{construction-xi} and Corollary \ref{prop-delta-leq-xi}, we obtain 

\begin{corollary}
\label{not-important-corollary}
Under Assumptions~\ref{setting-equivariant-fibrations}, we have 
\[
\Delta(\sigma, R)\leq \Xi =\Delta(\sigma,R')\leq \Delta(\sigma)\leq \Delta_{\mathrm{sing}}.
\]
\end{corollary}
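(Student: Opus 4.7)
The plan is to verify each relation in the chain $\Delta(\sigma, R) \leq \Xi = \Delta(\sigma, R') \leq \Delta(\sigma) \leq \Delta_{\mathrm{sing}}$ separately, since each one follows from an independent mechanism. The middle equality is simply the definition of $\Xi$ recorded in~\eqref{eq-defin-xi}, and the first inequality $\Delta(\sigma, R) \leq \Xi$ was already established inside Construction~\ref{construction-xi}: since $R' - R = \sigma^* D$ with $D$ effective on $Y$, semi-additivity (Remark~\ref{rem-semi-additivity}) yields $\Delta(\sigma, R') = \Delta(\sigma, R) + D \geq \Delta(\sigma, R)$.

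For the second inequality $\Delta(\sigma, R') \leq \Delta(\sigma)$, I would fix a prime divisor $Z$ on $Y$ and compare the two $Z$-coefficients over the typical point $\eta_Z$ of $Z$. Because $R'$ is anti-effective, one has $R' + \lambda \sigma^* Z \leq \lambda \sigma^* Z$ for every $\lambda \in \mathbb{Q}_{\geq 0}$. The standard monotonicity of log discrepancies under replacing a sub-boundary by a smaller one then shows that whenever $(X, \lambda \sigma^* Z)$ is lc over $\eta_Z$, the pair $(X, R' + \lambda \sigma^* Z)$ is lc over $\eta_Z$ as well, so $\mathrm{lct}(X, R'; \sigma^* Z) \geq \mathrm{lct}(X; \sigma^* Z)$, and the $Z$-coefficient of $\Delta(\sigma, R')$ does not exceed that of $\Delta(\sigma)$. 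Ranging over $Z$ gives $\Xi \leq \Delta(\sigma)$.

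For the last inequality $\Delta(\sigma) \leq \Delta_{\mathrm{sing}}$, pick any prime divisor $Z \subset Y$. If the fiber $\sigma^{-1}(\eta_Z)$ is smooth, then $\sigma$ is a smooth morphism in a neighborhood of $\eta_Z$, so $\sigma^* Z$ is a smooth reduced Cartier divisor at the typical point of each component of its support; this gives $\mathrm{lct}(X; \sigma^* Z) = 1$ over $\eta_Z$, hence the $Z$-coefficient of $\Delta(\sigma)$ vanishes. In particular the support of $\Delta(\sigma)$ is contained in $\Delta_{\mathrm{sing}}$. Moreover, since $X$ is smooth and $\sigma^* Z$ is a nontrivial effective integral divisor, the threshold $\mathrm{lct}(X; \sigma^* Z)$ is strictly positive, so every coefficient $1 - \mathrm{lct}(X; \sigma^* Z)$ of $\Delta(\sigma)$ lies in $[0,1)$ and is therefore bounded above by the corresponding coefficient of the reduced divisor $\Delta_{\mathrm{sing}}$. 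No step poses a genuine obstacle here; the only mildly non-formal ingredient is the monotonicity of the log canonical threshold under replacing a sub-boundary by a smaller one, a standard property of lc singularities already implicit in the sub-pair version of the discriminant used throughout Section~\ref{sec-canonical-bundle}.
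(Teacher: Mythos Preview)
Your argument is correct. The equality $\Xi=\Delta(\sigma,R')$ and the first inequality are exactly what the paper records in Construction~\ref{construction-xi}, and your direct monotonicity argument for $\Delta(\sigma,R')\le\Delta(\sigma)$ together with the smooth-fiber computation for $\Delta(\sigma)\le\Delta_{\mathrm{sing}}$ are the natural elementary verifications the paper leaves implicit in its one-line ``Combining Construction~\ref{construction-xi} and Corollary~\ref{prop-delta-leq-xi}''; the paper's route through Proposition~\ref{prop-xi-is-effective} (reducing to a curve and a relatively minimal elliptic surface) would give the same inequalities via Table~\ref{table:monodromy}, but your approach avoids that detour entirely.
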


We show that the divisor $\Xi$ admits an alternative characterization. In particular, it will give another proof of Proposition \ref{prop-xi-is-effective}.

\begin{definition}
\label{def-mult-mon}
Under Assumptions~\ref{setting-equivariant-fibrations}, let $\Delta_{\mathrm{mult}, m}$ be the maximal effective reduced divisor on $Y$ such that the preimage via $\sigma$ of each component of $\Delta_{\mathrm{mult},m}$ has multiplicity $m\geq 2$
(i.e., it is a divisor such that the greatest common divisor of the coefficients at its 
irreducible components equals~\mbox{$m\geq 2$}). 
Put 
\[
\Delta_{\mathrm{mult}}=\sum_{m\geq 2}(1-1/m)\Delta_{\mathrm{mult},m}.
\]
Let $\Delta_{\mathrm{mon}} = \sum a_i D_i$, where $D_i$ are all the prime divisors on $Y$ such that the monodromy around $D_i$ is as in the types $I,II,III,IV,I^*,II^*,III^*,IV^*$ in Table~\ref{table:monodromy} from Section~\ref{sec-canonical-bundle}, and the coefficients $a_i$ are equal to $1-\mathrm{lct}(S,F)$ for the corresponding type.  
\end{definition}

\begin{proposition}
\label{prop-monodromy-description}
Under Assumptions~\ref{setting-equivariant-fibrations}, we have 
\[
\Xi = \Delta_{\mathrm{mult}} + \Delta_{\mathrm{mon}}. 
\]
\end{proposition}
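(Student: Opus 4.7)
The plan is to verify the equality $\Xi = \Delta_{\mathrm{mult}} + \Delta_{\mathrm{mon}}$ by restricting both sides to a general complete intersection curve in $Y$ and then applying Remark~\ref{rem-restriction-of-two-divisors}. More precisely, using Corollary~\ref{corollary:restriction-to-a-curve}, I would pick a smooth irreducible curve $C \subset Y$ cut out by sufficiently general very ample divisors; by Bertini's theorem the preimage $S = \sigma^{-1}(C)$ is a smooth compact complex surface, and $\sigma_S = \sigma|_S \colon S \to C$ is an elliptic fibration. For generic $C$ the divisor $R'_2$ from Construction~\ref{construction-xi} does not meet $S$, since $\sigma(R'_2)$ has codimension at least two in $Y$; therefore $R'|_S = R'_1|_S$. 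An iterated application of Lemma~\ref{lem-delta-restricted} then yields $\Xi|_C = \Delta(\sigma_S, R'|_S)$.

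Next, following the strategy used in the proof of Proposition~\ref{prop-xi-is-effective}, I would pass to the relatively minimal model $\phi \colon S \to T$ of $\sigma_S$ over $C$, inducing a relatively minimal elliptic fibration $\sigma_T\colon T \to C$, and define $R'_T$ by the crepant relation $K_S + R'|_S = \phi^{*}(K_T + R'_T)$. Since log canonical thresholds are preserved under crepant pullback, one has $\Delta(\sigma_S, R'|_S) = \Delta(\sigma_T, R'_T)$. The crucial observation already extracted in Proposition~\ref{prop-xi-is-effective}, which uses the maximality built into the construction of $R'_1$, is that $R'_T = 0$. Hence $\Xi|_C = \Delta(\sigma_T)$, the usual discriminant $\mathbb{Q}$-divisor of the relatively minimal elliptic fibration $\sigma_T$.

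It remains to identify $\Delta(\sigma_T)$ with $(\Delta_{\mathrm{mult}} + \Delta_{\mathrm{mon}})|_C$ pointwise. For each $P \in C$ the coefficient of $P$ in $\Delta(\sigma_T)$ equals $1 - \mathrm{lct}(T, \sigma_T^{*} P)$, which by Table~\ref{table:monodromy} is $0$ for type $I_n$, equals $1 - 1/m$ for type $\prescript{}{m}I_n$, and equals the tabulated value for the remaining non-multiple types $I_n^{*}, II, II^{*}, III, III^{*}, IV, IV^{*}$. Since $C$ is general, it meets each prime divisor $D \subset Y$ appearing in the supports of $\Delta_{\mathrm{mult}}$ or $\Delta_{\mathrm{mon}}$ transversally at a typical point, and for such $P = C \cap D$ the multiplicity of $\sigma^{*} D$ on $X$ agrees with the multiplicity of $\sigma_T^{*} P$ on $T$ (multiplicity is preserved by transverse restriction and by contractions of $(-1)$-curves inside fibers), while the local monodromy of $\sigma$ around $D$ agrees with the local monodromy of $\sigma_T$ around $P$ (as monodromy is a topological invariant preserved by both operations). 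By Proposition~\ref{prop-monodromy-determines-everything}, the Kodaira type of $\sigma_T^{*} P$ is then uniquely determined, and a direct case-by-case check against Definition~\ref{def-mult-mon} shows that the coefficient of $P$ contributed by $\Delta_{\mathrm{mult}} + \Delta_{\mathrm{mon}}$ matches $1 - \mathrm{lct}(T, \sigma_T^{*} P)$ in every case.

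The main obstacle in this plan is the appeal to $R'_T = 0$, inherited from the maximality argument in Proposition~\ref{prop-xi-is-effective}; this is what reduces the restricted discriminant to the pure fibration discriminant and enables the classification-based matching. Once that is granted, the rest amounts to bookkeeping against Kodaira's classification of singular fibers.
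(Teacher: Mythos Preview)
Your proposal is correct and follows essentially the same route as the paper: reduce to a general complete intersection curve $C$ and the surface $S=\sigma^{-1}(C)$, pass to the relatively minimal model $T\to C$, and then read off the coefficients from Kodaira's classification using that multiplicity and local monodromy are invariant under both transverse restriction and the relative MMP. The only cosmetic difference is that the paper phrases the reduction in terms of the invariance of the conditions ``$\sigma^*D$ has multiplicity $m$'' and ``the monodromy around $D$ is as in Table~\ref{table:monodromy}'' under these operations, whereas you make the link to $\Xi$ explicit via the identity $R'_T=0$ from Proposition~\ref{prop-xi-is-effective}; the two presentations are equivalent.
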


\begin{proof}
Consider the following properties of a prime divisor $D$ on $Y$:
\begin{enumerate}
\item
$\sigma^*D$ is not multiple,  
and the monodromy of $\sigma$ around $D$ is not unipotent (and in particular, it is non-trivial), or
\item
$\sigma^*D$ has multiplicity $m\geq 2$. 
\end{enumerate}
Note that the conditions (1) and (2) do not change under taking general hyperplane section $H$ on $Y$ and replacing $\sigma$ by the fibration $\sigma|_{V}\colon V\to H$, where $V=\sigma^{-1}(H)$.
Furthermore, if $\dim X=2$, then conditions (1) and (2) are invariant under running a relative minimal program on the compact complex surface $X$ over the curve $Y$. 
Thus it is enough to assume that $\sigma\colon X\to Y$ is a relatively minimal elliptic fibration over a curve. In this case, if $\sigma^*D$ is not multiple, 
we can compute the coefficient at~$D$ in $\Xi$ in terms of the monodromy around $D$ by Proposition \ref{prop-monodromy-determines-everything}.  
If $\sigma^*D$ is multiple and has multiplicity $m$, then the coefficient at $D$ in~$\Xi$ is equal to~\mbox{$(1-1/m)$}, see Table \ref{table:monodromy}. 
\end{proof}

Using Proposition \ref{prop-monodromy-description}, we will show that $\Xi$ is  $\overline{\sigma}(\mathrm{Bim}_{\mathrm{reg}}(X;\sigma))$-invariant. 
To do this, it is enough to show that both $\Delta_{\mult}$ and $\Delta_{\mathrm{mon}}$ are $\overline{\sigma}(\mathrm{Bim}_{\mathrm{reg}}(X;\sigma))$-invariant.

\begin{proposition}
\label{prop-mult-is-preserved}
Under Assumptions~\ref{setting-equivariant-fibrations}, the $\QQ$-divisors $\Delta_{\mathrm{mult},m}$ 
and $\Delta_{\mathrm{mult}}$ 
are $\overline{\sigma}(\mathrm{Bim}_{\mathrm{reg}}(X;\sigma))$-invariant. 
\end{proposition}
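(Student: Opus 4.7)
The plan is to fix $\alpha \in \mathrm{Bim}_{\mathrm{reg}}(X;\sigma)$ with induced biregular action $\beta=\overline{\sigma}(\alpha)$ on $Y$, and to show that for every prime divisor $D\subset Y$ the multiplicities of $\sigma^*D$ and $\sigma^*\beta(D)$ at their typical points coincide. Since $\Delta_{\mathrm{mult}}=\sum_{m\ge 2}(1-1/m)\Delta_{\mathrm{mult},m}$, this will yield $\beta$-invariance of every $\Delta_{\mathrm{mult},m}$ and of $\Delta_{\mathrm{mult}}$ in one step.

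The first step is to cut down to surfaces. I would apply Corollary~\ref{corollary:restriction-to-a-curve} to obtain a smooth irreducible curve $C\subset Y$ arising as an intersection of general very ample divisors; by Bertini, $C$ meets $D$ transversely at typical points of $D$, and likewise $\beta(C)$ meets $\beta(D)$ transversely at typical points of $\beta(D)$. Pick a point $P\in C\cap D$, so that $\beta(P)\in\beta(C)\cap\beta(D)$. Setting $S=\sigma^{-1}(C)$ and $S'=\sigma^{-1}(\beta(C))$, the Bertini argument used in the proof of Proposition~\ref{prop:canonical-bundle-formula} yields smooth compact complex surfaces carrying elliptic fibrations $\sigma_S\colon S\to C$ and $\sigma_{S'}\colon S'\to\beta(C)$. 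By transversality, the Cartier divisor $\sigma^*D|_S$ coincides with $\sigma_S^*P$, so the multiplicity of $\sigma^*D$ at its typical point equals the multiplicity of the fiber of $\sigma_S$ over $P$, and similarly for $\beta(D)$, $\beta(P)$, $\sigma_{S'}$. Resolving the graph of $\alpha$ and taking strict transforms then produces a bimeromorphic map $\alpha_S\colon S\dashrightarrow S'$ which intertwines $\sigma_S$ and $\sigma_{S'}$ via the isomorphism $\beta|_C\colon C\to\beta(C)$.

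Next, I would pass to relatively minimal models $\phi\colon S\to T$ and $\phi'\colon S'\to T'$ over the respective bases, obtaining relatively minimal elliptic fibrations $\sigma_T\colon T\to C$ and $\sigma_{T'}\colon T'\to\beta(C)$ together with a bimeromorphic map $\alpha_T\colon T\dashrightarrow T'$ still compatible with $\beta|_C$. By Lemma~\ref{lem-on-surfaces-everything-is-biregular} the map $\alpha_T$ is automatically biregular; in particular, it carries the fiber of $\sigma_T$ over $P$ isomorphically onto the fiber of $\sigma_{T'}$ over $\beta(P)$, preserving their multiplicities. Furthermore, each blow-up in the sequence realising $\phi$ (and $\phi'$) preserves the gcd of coefficients of a fiber: if $\sum b_k G_k$ is the fiber on the blown-down surface with $\gcd(b_k)=m$, its total transform is $\sum b_k\widetilde G_k+(\sum_k b_k c_k)E$, whose coefficients again have gcd $m$. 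Iterating, the multiplicity of the fiber of $\sigma_S$ over $P$ equals that of $\sigma_T$ over $P$, and similarly on the $\beta$-side, which closes the chain of equalities.

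The main obstacle I expect is the bookkeeping in the reduction step: one has to choose the generic very ample divisors cutting out $C$ so that $S$, $S'$ are simultaneously smooth, the transversality identifications $\sigma^*D|_S=\sigma_S^*P$ and $\sigma^*\beta(D)|_{S'}=\sigma_{S'}^*\beta(P)$ both hold, and $\alpha$ descends to a well-defined bimeromorphic $\alpha_S$ after resolution of indeterminacy. Each of these is a standard Bertini or resolution argument, but they must be executed in a compatible way; once this is in place, Lemma~\ref{lem-on-surfaces-everything-is-biregular} and the elementary gcd computation provide the substance of the proof.
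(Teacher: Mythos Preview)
Your argument is correct, but it takes a substantially more elaborate route than the paper. The paper never cuts down to surfaces or invokes Lemma~\ref{lem-on-surfaces-everything-is-biregular}; instead it gives a direct push-pull computation in the ambient dimension. After resolving the indeterminacy of $\alpha$ via $X\xleftarrow{\phi}W\xrightarrow{\psi}X$, commutativity of the square gives $\psi_*\phi^*(\sigma^*\overline{D})=\sigma^*(\overline{\alpha}(\overline{D}))$ as Cartier divisors. If $\sigma^*\overline{D}=mD'$ with $m$ the multiplicity, then $\sigma^*(\overline{\alpha}(\overline{D}))=m\,\psi_*\phi^*D'$, so $m$ divides the multiplicity $m'$ of $\sigma^*(\overline{\alpha}(\overline{D}))$; applying the same to $\alpha^{-1}$ gives $m'\mid m$, hence $m=m'$. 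That is the entire proof.

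What you gain from your approach is a concrete geometric picture and a reuse of the hyperplane-restriction machinery already set up for Proposition~\ref{prop:canonical-bundle-formula}; what you lose is brevity and generality. In particular, your route depends on the elliptic-fibration structure through Lemma~\ref{lem-on-surfaces-everything-is-biregular}, whereas the paper's argument uses nothing about the fibers and therefore works verbatim for an arbitrary fibration from a smooth compact complex manifold (the paper notes this in the remark following the proposition). Your gcd-under-blow-up computation is essentially the surface shadow of the paper's observation that multiplicity is preserved under $\psi_*\phi^*$ of Cartier divisors.
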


\begin{proof}
It is enough to prove the assertion for the divisor $\Delta_{\mathrm{mult},m}$.

Take $\alpha\in\mathrm{Bim}_{\mathrm{reg}}(X;\sigma)$. Let $\overline{\alpha}$ be the image of $\alpha$ in 
$$
\overline{\sigma}(\mathrm{Bim}_{\mathrm{reg}}(X;\sigma))\subset \mathrm{Aut}(Y).
$$  
Resolve the indeterminacy of $\alpha$. We obtain a diagram
\begin{equation*}
\begin{tikzcd}
 & W \arrow[dl, "\phi"'] \arrow{dr}{\psi} &  \\
X \arrow{d}{\sigma} \arrow[dashed]{rr}{\alpha}  &  & X \arrow{d}{\sigma} \\
Y \arrow{rr}{\overline{\alpha}} & & Y
\end{tikzcd}
\end{equation*} 
where $\phi$  and $\psi$ are bimeromorphic contractions. Let $\overline{D}$ be an irreducible component of $\Delta_{\mathrm{mult}}$. This means that 
$$
D=\sigma^*(\overline{D})=\sum d_i D_i
$$ 
with the greatest common divisor of the coefficients $d_i$ equal to $m\geq 2$. Then we have $D=mD'$ for some divisor $D'$. Thus, one has 
\begin{equation}\label{eq:multiplicity-after-automorphism}
\psi_*\phi^*D=m\psi_*\phi^*D'=\sigma^*(\overline{\alpha}(\overline{D})),
\end{equation} 
so $\overline{\alpha}(\overline{D})$ is an irreducible component of $\Delta_{\mathrm{mult}}$. 
Furthermore, if we write 
$$
\sigma^*(\overline{\alpha}(\overline{D}))=\sum d'_i D'_i,
$$ 
and denote by $m'$ the greatest common divisor of the coefficients $d_i'$, 
then~\eqref{eq:multiplicity-after-automorphism} shows that $m$ divides $m'$.   
Applying a similar argument to $\alpha^{-1}$, we see that $m'$ divides $m$, so that $m=m'$. 
This shows that $\Delta_{\mathrm{mult},m}$ is $\overline{\alpha}$-invariant.
\end{proof}

\begin{remark}
One can generalize Proposition \ref{prop-mult-is-preserved} to the case of arbitrary (not necessarily elliptic) fibrations from a smooth compact complex manifold. 
On the other hand, one cannot get rid of the smoothness assumption even for elliptic surfaces. 
\end{remark}

\begin{proposition}\label{prop-mon-is-preserved} 
Under Assumptions~\ref{setting-equivariant-fibrations}, the divisor $\Delta_{\mathrm{mon}}$ is $\overline{\sigma}(\mathrm{Bim}_{\mathrm{reg}}(X;\sigma))$-invariant.  
\end{proposition}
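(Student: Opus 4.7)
The plan is to reduce to the case of a relatively minimal elliptic fibration from a compact complex surface to a curve and then invoke Lemma~\ref{lem-on-surfaces-everything-is-biregular} to upgrade a bimeromorphic map between the relatively minimal models to a biholomorphism. By Proposition~\ref{prop-monodromy-determines-everything} and Table~\ref{table:monodromy}, the coefficient at a prime divisor $\overline{D}\subset Y$ in $\Delta_{\mathrm{mon}}$ is determined by the Kodaira type of the corresponding singular fiber in the relatively minimal model, and a biholomorphism of relatively minimal elliptic surfaces compatible with the bases preserves this data.

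Fix $\alpha\in\mathrm{Bim}_{\mathrm{reg}}(X;\sigma)$, set $\overline{\alpha}=\overline{\sigma}(\alpha)\in\mathrm{Aut}(Y)$, and let $\overline{D}\subset Y$ be a prime divisor with image $\overline{D}'=\overline{\alpha}(\overline{D})$. Applying Corollary~\ref{corollary:restriction-to-a-curve} together with Bertini's theorem, I would choose a smooth irreducible curve $C\subset Y$, cut out by sufficiently general very ample divisors, meeting $\overline{D}$ transversally at a general point $P$, and such that both $V=\sigma^{-1}(C)$ and $V'=\sigma^{-1}(\overline{\alpha}(C))$ are smooth compact complex surfaces with elliptic fibrations $\sigma_V\colon V\to C$ and $\sigma_{V'}\colon V'\to\overline{\alpha}(C)$. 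Since the indeterminacy locus of $\alpha$ has codimension at least two in $X$, a sufficiently general $C$ ensures that this locus intersects $V$ in dimension zero, so that $\alpha$ restricts to a bimeromorphic map $\alpha_V\colon V\dashrightarrow V'$ lying over the isomorphism $\overline{\alpha}|_C\colon C\to\overline{\alpha}(C)$.

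Next I would pass to the relatively minimal models $T\to C$ and $T'\to\overline{\alpha}(C)$ of $\sigma_V$ and $\sigma_{V'}$, which exist since every fibration of a compact complex surface admits a relatively minimal model. The map $\alpha_V$ descends to a bimeromorphic map $\widetilde{\alpha}\colon T\dashrightarrow T'$ compatible with $\overline{\alpha}|_C$, and Lemma~\ref{lem-on-surfaces-everything-is-biregular} then forces $\widetilde{\alpha}$ to be a biholomorphism. Consequently, the Kodaira type of the fiber of $T\to C$ over $P$ coincides with the Kodaira type of the fiber of $T'\to\overline{\alpha}(C)$ over $P'=\overline{\alpha}(P)$.

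Since $C$ meets $\overline{D}$ transversally at a general point, the local monodromy of $\sigma$ around $\overline{D}$ equals the local monodromy of $\sigma_T$ around $P$, so the coefficient of $\overline{D}$ in $\Delta_{\mathrm{mon}}$ is precisely the value $1-\mathrm{lct}$ assigned to that Kodaira type in Table~\ref{table:monodromy}, and analogously for $\overline{D}'$ and $P'$. Equality of the two Kodaira types therefore yields equality of the two coefficients, proving that $\Delta_{\mathrm{mon}}$ is $\overline{\alpha}$-invariant. The main technical point I expect to need care with is the arrangement in paragraph two, namely guaranteeing that $\alpha_V$ is genuinely a bimeromorphic map of smooth compact complex surfaces admitting relatively minimal models compatible with $\overline{\alpha}|_C$, so that Lemma~\ref{lem-on-surfaces-everything-is-biregular} actually applies; the remainder is a pointwise comparison of Kodaira types, parallel in spirit to Proposition~\ref{prop-mult-is-preserved}.
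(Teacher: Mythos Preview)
Your argument is correct, but it takes a different route from the paper's primary proof of this proposition. The paper gives a short topological argument: for a prime divisor $D$ in the support of $\Delta_{\mathrm{mon}}$, it chooses a small loop $\overline{\gamma}$ around $D$ avoiding the images of the indeterminacy loci of $\alpha$ and $\alpha^{-1}$ and of the non-smooth fibers, observes that $\sigma^{-1}(\overline{\gamma})\to\overline{\gamma}$ is a differentiably locally trivial elliptic fibration over a circle, and notes that $\alpha$ restricts to a homeomorphism between $\sigma^{-1}(\overline{\gamma})$ and its image. This immediately identifies the monodromy around $D$ with the monodromy around $\overline{\alpha}(D)$.

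Your approach---cutting down to surfaces by general hyperplane sections, passing to relatively minimal models, and invoking Lemma~\ref{lem-on-surfaces-everything-is-biregular} to force the induced bimeromorphic map to be an isomorphism---is the algebro-geometric counterpart. It is in fact essentially the method the paper uses in the \emph{subsequent} unnumbered proposition, where the authors give an alternative proof that the full divisor $\Xi=\Delta_{\mathrm{mult}}+\Delta_{\mathrm{mon}}$ is invariant, precisely because this argument is more amenable to generalization. The topological proof is shorter and sidesteps the Bertini-type verifications you flag in your last paragraph (smoothness of $V$ and $V'$, and that $\alpha_V$ is a genuine bimeromorphic map), but it is specific to the monodromy part; your argument, by contrast, handles $\Delta_{\mathrm{mon}}$ and $\Delta_{\mathrm{mult}}$ in one stroke, since a biholomorphism of relatively minimal elliptic surfaces over an isomorphism of the base preserves the full Kodaira type, multiplicity included. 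The technical point you single out---ensuring that $\alpha_V\colon V\dashrightarrow V'$ is bimeromorphic between smooth surfaces---goes through because $\overline{\alpha}$ is a biregular automorphism of $Y$, so $\overline{\alpha}(C)$ is again a general complete intersection of very ample divisors and Bertini applies to $V'$ just as to $V$.
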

\begin{proof}
Let $\alpha$ be an element in $\mathrm{Bim}_{\mathrm{reg}}(X;\sigma)$. Denote by $I$ the union of the indeterminacy loci of $\alpha$ and $\alpha^{-1}$ on $X$, and all fibers of $\sigma$ which are not elliptic curves. 
Set $\overline{I}=\sigma(I)$. Thus, $\overline{I}$ is a closed subset of codimension at least $1$ in~$Y$. 
Let $D$ be an irreducible component of $\Delta_{\mathrm{mon}}$. Let $\overline{\gamma}$ be a small loop going around $D$ in the counter-clockwise direction (with respect to the natural orientation on~$Y$), which has no common points with $\overline{I}$. Set $\gamma=\sigma^{-1}(\overline{\gamma})$. Then 
\[
\sigma|_{\gamma}\colon \gamma\to\overline{\gamma}
\] 
is a differentially locally trivial fibration into elliptic curves over a circle. 
We see that~$\alpha$ induces a homeomorphism between $\gamma$ and $\alpha(\gamma)$. It follows that the monodromy around $D$ and $\overline{\alpha}(D)$ is the same. \end{proof}

Combining Propositions~\ref{prop-mult-is-preserved} and~\ref{prop-mon-is-preserved}, we obtain

\begin{corollary}
Under Assumptions~\ref{setting-equivariant-fibrations}, the divisor 
$$
\Xi = \Delta_{\mathrm{mult}} + \Delta_{\mathrm{mon}}
$$ 
is $\overline{\sigma}(\mathrm{Bim}_{\mathrm{reg}}(X;\sigma))$-invariant.
\end{corollary}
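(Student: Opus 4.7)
The plan is to note that this corollary is essentially an immediate consequence of the two preceding propositions, combined with the explicit description of $\Xi$ given in Proposition~\ref{prop-monodromy-description}. Specifically, by Proposition~\ref{prop-monodromy-description}, one has the equality of $\mathbb{Q}$-divisors
\[
\Xi = \Delta_{\mathrm{mult}} + \Delta_{\mathrm{mon}} = \sum_{m\ge 2}\left(1-\frac{1}{m}\right)\Delta_{\mathrm{mult},m} + \Delta_{\mathrm{mon}},
\]
so it suffices to argue that each summand is preserved by the image $\overline{\sigma}(\mathrm{Bim}_{\mathrm{reg}}(X;\sigma))$.

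For the first summand, Proposition~\ref{prop-mult-is-preserved} asserts that every reduced $\mathbb{Q}$-divisor $\Delta_{\mathrm{mult},m}$ is $\overline{\sigma}(\mathrm{Bim}_{\mathrm{reg}}(X;\sigma))$-invariant, and hence so is the weighted sum $\Delta_{\mathrm{mult}}$. For the second summand, Proposition~\ref{prop-mon-is-preserved} gives the invariance of $\Delta_{\mathrm{mon}}$ directly (the invariance of the divisor follows because the coefficients are entirely determined by the conjugacy class of the monodromy, which is preserved by any element of $\overline{\sigma}(\mathrm{Bim}_{\mathrm{reg}}(X;\sigma))$ as shown in the proof of that proposition).

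Thus the only step in the proof is to write: fix $\alpha\in\mathrm{Bim}_{\mathrm{reg}}(X;\sigma)$, apply Propositions~\ref{prop-mult-is-preserved} and~\ref{prop-mon-is-preserved} to conclude that $\overline{\sigma}(\alpha)_*\Delta_{\mathrm{mult}}=\Delta_{\mathrm{mult}}$ and $\overline{\sigma}(\alpha)_*\Delta_{\mathrm{mon}}=\Delta_{\mathrm{mon}}$, add the equalities, and invoke Proposition~\ref{prop-monodromy-description} to rewrite the sum as $\Xi$. There is no genuine obstacle; the entire content of this corollary has been distributed across the three earlier statements, and the present step is merely their formal combination. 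The only thing to be mildly careful about is that $\Delta_{\mathrm{mult}}$ and $\Delta_{\mathrm{mon}}$ have disjoint supports in general (by construction $\Delta_{\mathrm{mon}}$ involves non-multiple fibers with non-unipotent monodromy, while $\Delta_{\mathrm{mult}}$ involves multiple fibers), so no cancellation ambiguity arises in identifying the irreducible components of $\Xi$ that must be preserved.
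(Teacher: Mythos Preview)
Your proposal is correct and matches the paper's approach exactly: the paper simply states that the corollary follows by combining Propositions~\ref{prop-mult-is-preserved} and~\ref{prop-mon-is-preserved}, with the decomposition $\Xi=\Delta_{\mathrm{mult}}+\Delta_{\mathrm{mon}}$ coming from Proposition~\ref{prop-monodromy-description}. Your remark about disjoint supports is harmless but unnecessary, since the sum of two invariant $\mathbb{Q}$-divisors is invariant regardless of whether their supports overlap.
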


We give another proof that $\Xi$ is $\overline{\sigma}(\mathrm{Bim}_{\mathrm{reg}}(X;\sigma))$-invariant, which could be useful for generalizations. 

\begin{proposition}
Under Assumptions~\ref{setting-equivariant-fibrations}, the divisor $\Xi$ is $\overline{\sigma}(\mathrm{Bim}_{\mathrm{reg}}(X;\sigma))$-invariant. 
\end{proposition}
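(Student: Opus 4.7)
The plan is to reduce to the two-dimensional case already handled in Corollary~\ref{cor-delta-is-preserved-on-surface}, by restricting both $\Xi$ and the action of an arbitrary $\alpha\in\mathrm{Bim}_{\mathrm{reg}}(X;\sigma)$ to a sufficiently general curve in $Y$ and exploiting the uniqueness of relatively minimal elliptic surfaces.

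Concretely, I would fix $\alpha\in\mathrm{Bim}_{\mathrm{reg}}(X;\sigma)$, set $\overline{\alpha}=\overline{\sigma}(\alpha)\in\mathrm{Aut}(Y)$, and invoke Corollary~\ref{corollary:restriction-to-a-curve} to produce a smooth irreducible curve $C\subset Y$ cut out as an intersection of sufficiently general very ample divisors. By genericity one can arrange simultaneously that the surfaces $S=\sigma^{-1}(C)$ and $S'=\sigma^{-1}(\overline{\alpha}(C))$ are smooth (via Bertini), and that $C$ and $\overline{\alpha}(C)$ both avoid the images in $Y$ of the codimension-$\ge 2$ indeterminacy loci of $\alpha^{\pm1}$. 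The equivariance $\sigma\circ\alpha=\overline{\alpha}\circ\sigma$ then forces $\alpha$ to restrict to a bimeromorphism $S\dashrightarrow S'$ covering the isomorphism $\overline{\alpha}|_C\colon C\to\overline{\alpha}(C)$.

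Next, I would compose $\alpha|_S$ with the relatively minimal model contractions $S\to T$ and $S'\to T'$ to obtain a bimeromorphism between the relatively minimal elliptic fibrations $T\to C$ and $T'\to\overline{\alpha}(C)$ covering $\overline{\alpha}|_C$; by Lemma~\ref{lem-on-surfaces-everything-is-biregular} this bimeromorphism is in fact a biholomorphism. The computation already carried out inside the proof of Proposition~\ref{prop-xi-is-effective} then identifies $\Xi|_C$ with $\Delta(\sigma_T)$ and $\Xi|_{\overline{\alpha}(C)}$ with $\Delta(\sigma_{T'})$, and the biholomorphism $T\to T'$ transports the former to the latter because log-canonical thresholds are biregular invariants. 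This yields $(\overline{\alpha}_*\Xi)|_{\overline{\alpha}(C)}=\Xi|_{\overline{\alpha}(C)}$, and letting $C$ range over a sufficiently dense family of such complete intersections, Remark~\ref{rem-restriction-of-two-divisors} promotes the coincidence of restrictions to the global equality $\overline{\alpha}_*\Xi=\Xi$ on $Y$.

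The main technical point to be careful about is the simultaneous genericity in the first step: the very ample divisors cutting out $C$ must be chosen so that $S$, $S'$, $C$ and $\overline{\alpha}(C)$ are all in general position with respect to both the indeterminacy loci of $\alpha^{\pm1}$ and the singular-fiber locus of $\sigma$. Once this Bertini-type genericity is arranged, the argument reduces mechanically to the two-dimensional theory via the uniqueness of relatively minimal models, and this slicing approach has the advantage of being well-suited to generalizations (e.g.\ to higher-dimensional fibers) where the monodromy classification used in the previous proof would be less transparent.
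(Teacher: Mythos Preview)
Your approach is essentially identical to the paper's second proof of this proposition: slice down to a general complete-intersection curve $C\subset Y$, restrict $\alpha$ to a bimeromorphic map of the surfaces over $C$ and $\overline{\alpha}(C)$, pass to relatively minimal models where Lemma~\ref{lem-on-surfaces-everything-is-biregular} upgrades this to an isomorphism, identify $\Xi|_C$ with $\Delta(\sigma_T)$ via the computation in Proposition~\ref{prop-xi-is-effective} (equivalently, iterated use of Lemma~\ref{lem-delta-restricted}), and then globalize using Remark~\ref{rem-restriction-of-two-divisors}. One small caveat on your genericity step: you cannot in general arrange that $C$ \emph{avoids} $\sigma(I)$ when the image of the indeterminacy locus happens to be a divisor in $Y$, but what you actually need---and what general position does give---is that $S\cap I$ is at most a finite set, which is enough for $\alpha|_S$ to be a bimeromorphic map of surfaces.
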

\begin{proof}
Take $\alpha\in\mathrm{Bim}_{\mathrm{reg}}(X;\sigma)$. Denote by $\overline{\alpha}$ the image of $\alpha$ in
$$
\overline{\sigma}(\mathrm{Bim}_{\mathrm{reg}}(X;\sigma))\subset\mathrm{Aut}(Y).
$$ 
Taking successive hyperplane sections of $Y$ and passing to their preimages via $\sigma$ we obtain two fibrations $S\to C$ and $S'\to C'$ whose typical fibers are elliptic curves, where $S$ and $S'$ are smooth comp    act complex surfaces, $C$ and $C'$ are smooth projective curves with $\overline{\alpha}(C)=C'$. Also, we have a bimeromorphic map 
\[
\alpha_S=\alpha|_S\colon S\dashrightarrow S'.
\]
We also have $\mathbb{Q}$-divisors $R_S$ on $S$ and $R_{S'}$ on $S'$ such that $K_{S}+R_S\sim_{\mathbb{Q}}0/C$ and~\mbox{$K_{S'}+R_{S'}\sim_{\mathbb{Q}}0/C'$}, and both 
$-R_S$ and $-R_{S'}$ are effective. 

Consider relatively minimal models $S\to T$ over $C$ and $S'\to T'$ over $C'$, and let 
$$
K_T+R_T\sim_{\mathbb{Q}}0/C, \qquad K_{T'}+R_{T'}\sim_{\mathbb{Q}}0/C'
$$
be the push-forwards of $K_S+R_S$ and $K_{S'}+R_{S'}$, respectively. 
We obtain the following commutative diagram
\begin{equation*}
\begin{tikzcd}
(S,R_S) \arrow{d} \arrow[dashed]{rr}{\alpha_S} &  & (S', R_{S'}) \arrow{d} \\
(T,R_T) \arrow{d}{\sigma_T} \arrow{rr}{\alpha_T}  &  & (T',R_{T'}) \arrow{d}{\sigma_{T'}} \\
C \arrow{rr}{\overline{\alpha}_C} & & C'
\end{tikzcd}
\end{equation*}
Here by Lemma \ref{lem-on-surfaces-everything-is-biregular} the map $\alpha_T$ is an isomorphism between $T$ and~$T'$. 
It follows from Corollary~\ref{cor-delta-is-preserved-on-surface} that $\overline{\alpha}_C(\Delta(\sigma_T))=\Delta(\sigma_{T'})$.  
Inductively applying Lemma~\ref{lem-delta-restricted}, we get 
\[
\Xi|_C = \Delta(\sigma_S,R_S) = \Delta(\sigma_T), \quad \quad \quad \quad \Xi|_{C'} = \Delta(\sigma_S',R_{S'}) = \Delta(\sigma_{T'}). 
\]
Therefore, 
\[
\overline{\alpha}(\Xi)|_{C'}=\overline{\alpha}_C(\Xi|_{C})=\overline{\alpha}_C(\Delta(\sigma_T)) = \Delta(\sigma_{T'}) = \Xi|_{C'}.
\]
%By Remark \ref{rem-restriction-of-two-divisors}, 
It follows that $\Xi=\overline{\alpha}(\Xi)$, 
which gives the required assertion. 
\end{proof}

We conclude this section by an observation concerning Stein factorizations 
of certain morphisms.

\begin{lemma}
\label{lem-existence-of-lifting}
Let $\sigma\colon X\to Y$ be a fibration from a compact complex manifold~$X$ to a projective variety $Y$.  Let $G\subset \mathrm{Bim}_{\mathrm{reg}}(X;\sigma)$  
be a subgroup. Denote its image by~\mbox{$\overline{G}\subset \mathrm{Aut}(Y)$}. Let 
\begin{equation}
\label{eq-stein-factorization}
X\to X_1\to Y
\end{equation}
be the Stein factorization of $\sigma$. Then $\overline{G}$ admits a lifting $\overline{G}_1\subset \mathrm{Aut}(X_1)$ such that~\mbox{$\overline{G}_1\cong \overline{G}$}, and  factorization~\eqref{eq-stein-factorization} is $G$-equivariant. 
\end{lemma}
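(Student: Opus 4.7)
The plan is to transport the $G$-action on $X$ (given only by bimeromorphic self-maps) to an honest action on $X_1$ by biregular automorphisms, using the universal property of Stein factorization. Note that, since $\sigma\colon X\to Y$ is a fibration in the sense of Section~\ref{sec-preliminaries}, it is in particular a contraction, so $\sigma_*\oo_X = \oo_Y$ and the Stein factorization is trivial: $X_1 = Y$ and $\pi = \mathrm{id}_Y$. In this situation the statement is essentially tautological, with $\overline G_1 = \overline G\subset\Aut(Y) = \Aut(X_1)$. For the record I sketch the more general argument, which works for any proper morphism to a normal target.

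Given $g\in G$ with image $\bar g\in\overline G$, the identity $\sigma\circ g = \bar g\circ\sigma$ yields two presentations of $\bar g\circ\sigma\colon X\to Y$ as a meromorphic map with connected general fibers followed by a finite morphism:
\[
X\xrightarrow{\phi\circ g}X_1\xrightarrow{\pi}Y \qquad\text{and}\qquad X\xrightarrow{\phi}X_1\xrightarrow{\bar g\circ\pi}Y.
\]
By the uniqueness of Stein factorization these are canonically identified, producing a bimeromorphic self-map $g_1\colon X_1\dashrightarrow X_1$ characterized by $g_1\circ\phi = \phi\circ g$ and $\pi\circ g_1 = \bar g\circ\pi$. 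To promote $g_1$ to a biholomorphism, let $\Gamma$ be a resolution of its graph, with projections $\pi_1,\pi_2\colon\Gamma\to X_1$. The equality $\pi\circ\pi_2 = \bar g\circ\pi\circ\pi_1$ shows that $\pi\circ\pi_2$ is finite, so $\pi_2$ sends every fiber of the proper bimeromorphic morphism $\pi_1$ into a finite subset of $X_1$; connectedness of these fibers (from the normality of $X_1$) then forces each of them to be mapped to a single point. Hence $\pi_2$ factors through $\pi_1$, producing a holomorphic extension of $g_1$ to all of $X_1$; applying the same argument to $g^{-1}$ gives $g_1\in\Aut(X_1)$.

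Uniqueness in the universal property makes $g\mapsto g_1$ a group homomorphism $G\to\Aut(X_1)$, and the relations $g_1\circ\phi = \phi\circ g$, $\pi\circ g_1 = \bar g\circ\pi$ are exactly the asserted $G$-equivariance of \eqref{eq-stein-factorization}. Letting $\overline G_1\subset\Aut(X_1)$ be the image of this homomorphism, the natural descent $\Aut(X_1)\to\Aut(Y)$ induced by $\pi$ surjects $\overline G_1$ onto $\overline G$; conversely, $g_1 = \mathrm{id}_{X_1}$ forces $\bar g\circ\pi = \pi$, whence $\bar g = \mathrm{id}_Y$ by surjectivity of $\pi$, and in the contraction case the reverse implication is automatic since $\pi$ is an isomorphism. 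This yields $\overline G_1\cong \overline G$.

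The main technical obstacle is the upgrade of $g_1$ from a bimeromorphic map to a biregular automorphism; this rests crucially on the finiteness of $\pi$ and on the normality of $X_1$ built into the Stein factorization.
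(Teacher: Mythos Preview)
Your opening observation is correct and worth recording: under the paper's definitions a fibration is already a contraction between normal varieties with $\sigma_*\oo_X=\oo_Y$, so its Stein factorization is trivial and the lemma as literally stated is vacuous, with $\overline G_1=\overline G$. The lemma is actually invoked in Proposition~\ref{prop-first-modification} for the morphism $X_0\to Y$ where $Y$ is the closure of the pluricanonical image and need not be normal; the intended content is therefore the general version you go on to sketch.

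Your general sketch and the paper's proof share the same strategy---use the universal property of Stein factorization to produce $g_1$ and then check it is a group homomorphism---but there is a gap in your construction of $g_1$. You write two factorizations of $\bar g\circ\sigma$ and appeal to ``uniqueness of Stein factorization,'' yet in the first display $\phi\circ g$ is only meromorphic, and Stein factorization (existence and uniqueness) is a statement about proper holomorphic maps. The paper handles this by first resolving the indeterminacy of $g$, passing to $W$ with honest morphisms $\phi,\psi\colon W\to X$, and then observing that $\sigma_1\circ\psi\colon W\to X_1$ is constant on the connected fibres of $\sigma_1\circ\phi$ (because its image lies in a fibre of the finite map $\pi$); this yields $\overline\alpha_1$ directly as a holomorphic map. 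Your separate ``upgrade to biregular'' step is then superfluous, and incidentally the sentence ``$\pi\circ\pi_2$ is finite'' is not what you mean: $\pi_1$ has positive-dimensional fibres, so $\pi\circ\pi_2=\bar g\circ\pi\circ\pi_1$ is only generically finite. What you use, and what is true, is that each fibre of $\pi_1$ is mapped by $\pi\circ\pi_2$ to a point, hence by $\pi_2$ into a finite fibre of $\pi$. Once you insert the resolution of indeterminacy at the outset, your argument and the paper's coincide.
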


\begin{proof}
We argue that the claim follows from the universal property of the Stein factorization, cf. \cite[10, §6.1]{GR84}. 
Let $\alpha\in G$. 
We shall construct a uniquely defined lifting $\overline{\alpha}_1\in \mathrm{Aut}(X_1)$ of $\overline{\alpha}\in \mathrm{Aut}(X)$ which fits into the following commutative diagram:
\begin{equation*}
\begin{tikzcd}
 & W \arrow[dl, "\phi"'] \arrow{dr}{\psi} &  \\
X \arrow{d}{\sigma_1} \arrow[dashed]{rr}{\alpha}  &  & X \arrow{d}{\sigma_1} \\
X_1 \arrow{d}{\sigma} \arrow{rr}{\overline{\alpha}_1}  &  & X_1 \arrow{d}{\sigma} \\
Y \arrow{rr}{\overline{\alpha}} & & Y
\end{tikzcd}
\end{equation*}
Here $X\xleftarrow{\phi}W \xrightarrow{\psi}X$ is a resolution 
of the indeterminacy of the map $\alpha$. 
Note that the map $\sigma_1\circ\psi\colon W\to X_1$ is constant on the fibers of the composition $\sigma \circ\sigma_1\circ \phi$. Indeed, this follows from the commutativity of the diagram. 
Note also that the fibers of~\mbox{$\sigma_1\circ \phi$} and $\sigma_1\circ \psi$ are connected. In fact, 
$$
W\xrightarrow{\sigma_1\circ \phi} X_1\xrightarrow{\phantom{\phi}\sigma\phantom{\phi}} Y
$$ 
is the Stein factorization of $W\xrightarrow{\sigma \circ \sigma_1\circ \phi} Y$, and 
$$
W\xrightarrow{\sigma_1\circ \psi} X_1\xrightarrow{\phantom{\phi}\sigma\phantom{\phi}} Y
$$ 
is the Stein factorization of $W\xrightarrow{\sigma \circ \sigma_1\circ \psi} Y$. 
Hence, by the universal property of the Stein factorization there exists a unique morphism $\overline{\alpha_1}\colon X_1\to X_1$ such that 
$$
\sigma_1 \circ \psi = \overline{\alpha}_1\circ \sigma_1 \circ \phi.
$$
Since the lifting $\overline{\alpha}_1\in \mathrm{Aut}(X_1)$ of $\overline{\alpha}\in \mathrm{Aut}(X)$ is unique, it follows that the group~$\overline{G}$ admits a lifting $\overline{G}_1\subset \mathrm{Aut}(X_1)$ such that $\overline{G}_1\cong\overline{G}$ and factorization~\eqref{eq-stein-factorization} is $G$-equivariant. 
\end{proof}

\section{Bimeromorphic modifications}
Let $X$ be a compact complex manifold of dimension $n\geq 2$ such that for its Kodaira dimension we have $\kappa(X)= n-1$. Our construction is similar to that of~\mbox{\cite[Theorem 6.11]{Ue75}}. 
We fix the notation.  
Let $m$ be a number such that the linear system $|mK_X|$ defines the pluricanonical map $\sigma\colon X\dashrightarrow Y$. In this section, we construct certain bimeromorphic modifications of $X$ and $Y$ and prove the main results of the paper. 
We denote by $$
\Gamma=\overline{\rho}(\mathrm{Bim}(X))\subset \mathrm{PGL}(\mathrm{H}^0(X, \oo_X(mK_X)))
$$ 
the image of the projective pluricanonical representation.    

\begin{proposition}
\label{prop-first-modification}
Let $X$ be a compact complex manifold, and let $\phi\colon X\dashrightarrow Y$ be the pluricanonical map. Then there exists the following $\mathrm{Bim}(X)$-equivariant commutative diagram
\begin{equation}
\label{diag-first-mod}
\begin{tikzcd}
X_1 \arrow{d}{\sigma_1} \arrow{r}{\phi}  & X \arrow[dashed]{d}{\sigma} \\
Y_1 \arrow{r}{\psi} & Y  
\end{tikzcd}
\end{equation}
such that 
\begin{enumerate}
\item 
\label{first-mod-1}
$X_1$ and $Y_1$ are compact complex manifolds (in particular, they are smooth),
\item
\label{first-mod-2}
$\phi$ and $\psi$ are bimeromorphic modifications, 
\item 
\label{first-mod-3}
$\Gamma$ acts on $Y_1$ biregularly and 
$\psi$ is $\Gamma$-equivariant, 
\item
\label{first-mod-4}
$\sigma_1$ is a fibration and 
\[
K_{X_1} + R_1 \sim_{N_1} \sigma_1^*H_1,
\]
where $H_1$ is a big and nef $\mathbb{Q}$-divisor on $Y_1$, and $-R_1$ is an effective integral divisor on~$X_1$,
\item 
\label{first-mod-5}
the sub-pair $(X_1, R_1)$ is lc.
\end{enumerate}
\end{proposition}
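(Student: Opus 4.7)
The plan is to build the diagram through three successive equivariant modifications --- first resolving the target $Y$, then resolving the indeterminacy of $\sigma$, and finally promoting the resulting morphism to a genuine fibration --- and then to read off the canonical bundle formula from the fact that $|mK_X|$ defines $\sigma$.

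To begin, the image $Y = \sigma(X) \subset \PP(\mathrm{H}^0(X, \oo_X(mK_X))^\vee)$ is a projective variety on which $\Gamma$ acts biregularly by construction, and by Proposition~\ref{prop-action-bimer} this action is compatible with the $\mathrm{Bim}(X)$-action on $X$ via $\sigma$. Applying a functorial $\Gamma$-equivariant resolution of singularities (\cite[Theorem~13.2]{BM}) yields a smooth projective $Y_1$ with a $\Gamma$-equivariant birational morphism $\psi\colon Y_1 \to Y$. Next, since the composite meromorphic map $X \dashrightarrow Y_1$ is $\mathrm{Bim}(X)$-equivariant, a $\mathrm{Bim}(X)$-equivariant resolution of its indeterminacy produces a smooth compact complex manifold $X_1$ with a bimeromorphic modification $\phi\colon X_1 \to X$ and a morphism $\sigma_1'\colon X_1 \to Y_1$. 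To upgrade $\sigma_1'$ to a fibration I would pass to its Stein factorization $X_1 \to X_1'' \to Y_1$: by Remark~\ref{rem-ueno-connected} typical fibers of the pluricanonical map are connected, so $X_1'' \to Y_1$ is birational, and the $\Gamma$-action lifts biregularly to $X_1''$ by Lemma~\ref{lem-existence-of-lifting}. Replacing $Y_1$ by $X_1''$ and performing a further $\Gamma$-equivariant resolution of singularities (with an accompanying $\mathrm{Bim}(X)$-equivariant update of $X_1$) establishes items~(\ref{first-mod-1})--(\ref{first-mod-3}) and the fibration part of~(\ref{first-mod-4}).

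For the canonical bundle formula, let $H$ be a hyperplane section of $Y \subset \PP^N$, so that $mK_X \sim \sigma^*H + F$ with $F \geq 0$ the fixed part of $|mK_X|$. Writing $K_{X_1} = \phi^*K_X + E$ for an effective integer $\phi$-exceptional divisor $E$ and pulling back yields
\[
mK_{X_1} \sim \sigma_1^*(\psi^*H) + \phi^*F + mE.
\]
Taking $H_1 = \psi^*H$ --- which is big and nef on $Y_1$ as the pullback of an ample divisor by a birational morphism --- and choosing $R_1$ so that $-R_1$ is the effective divisor $\phi^*F + mE$ (with $N_1 = m$ absorbing the denominator in $H_1/m$) gives the required relation $K_{X_1}+R_1 \sim_{N_1} \sigma_1^*H_1$; to ensure $R_1$ is itself integral, one enlarges $m$ by a suitable divisibility factor so that $F$ becomes divisible by $m$. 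If moreover $\phi$ is taken to be a log resolution of $(X, F)$, then the support of $R_1$ has simple normal crossings, and since all coefficients of $R_1$ are non-positive the sub-pair $(X_1, R_1)$ is automatically lc, giving~(\ref{first-mod-5}).

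The principal obstacle is to carry out every step in a fully $\mathrm{Bim}(X)$-equivariant fashion despite $\mathrm{Bim}(X)$ being potentially very large, which mandates using the \emph{functorial} (canonical) resolution of~\cite{BM}; additional care is needed to ensure that the Stein factorization step and the subsequent resolution of $Y_1$ preserve both the $\Gamma$-biregularity and the smoothness properties established earlier.
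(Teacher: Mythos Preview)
Your approach mirrors the paper's --- resolve indeterminacy, take a Stein factorization, resolve the base --- just in a slightly different order; both orderings work. One clarification: your worry about carrying out every step $\mathrm{Bim}(X)$-equivariantly on the $X$-side is misplaced. Since $\mathrm{Bim}(X)$ acts only bimeromorphically on $X$, \emph{any} bimeromorphic modification $\phi\colon X_1\to X$ is automatically $\mathrm{Bim}(X)$-equivariant in the relevant sense (one has $\mathrm{Bim}(X_1)=\mathrm{Bim}(X)$, and the equivariance of $\sigma_1$ then follows from that of $\sigma$ together with the $\Gamma$-equivariance of $\psi$ by a direct diagram chase). Functoriality of resolution is needed only on the $Y$-side, where $\Gamma$ acts biregularly; this is exactly how the paper argues.

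There is, however, a genuine slip in your canonical bundle computation. With your choice $-R_1=\phi^*F+mE$ one does \emph{not} obtain $K_{X_1}+R_1\sim_m\sigma_1^*H_1$: the relation $mK_{X_1}\sim\sigma_1^*(\psi^*H)+\phi^*F+mE$ forces instead $R_1=-\tfrac{1}{m}\phi^*F-E$, which is only a $\QQ$-divisor. (You are also missing a further effective exceptional term coming from resolving the higher-codimension base locus of the mobile part of $|mK_X|$.) Your proposed remedy of ``enlarging $m$ so that $F$ becomes divisible by $m$'' does not work: passing from $|mK_X|$ to $|kmK_X|$ replaces $F$ by the fixed part of a \emph{different} linear system, and there is no mechanism ensuring divisibility. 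The paper, for its part, writes $mK_X\sim\sigma^*L$ and takes $R_1=-E_1$ to be purely the discrepancy divisor, which is manifestly integral --- though this tacitly suppresses the fixed-part contribution you rightly flagged.
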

\begin{proof}
%First of all, passing to the Stein factorization if needed, we may assume that $Y$ is normal. 
%Since $\dim Y=\dim X - 1$ and $X$ is smooth, the indeterminacy locus of the map $\sigma\colon X\dashrightarrow Y$ does not dominate $Y$. Also, since $X$ is smooth, a typical fiber of $\sigma$ is a smooth curve. 
%X'_1=\widetilde{Y_1 \times_{Y_0} X_0}

We construct the following commutative diagram:
\begin{equation*}
\begin{tikzcd}
X_1   \arrow{r}{\phi_1} \arrow{d}{\sigma_1}  & X_0 \arrow{d}{\sigma_0} \arrow{r}{\phi_0} & X \arrow[dashed]{d}{\sigma} \\
Y_1 \arrow{r}{\psi_1} & Y_0 \arrow{r}{\psi_0} & Y  
\end{tikzcd}
\end{equation*}
where 
\begin{itemize}
\item
$\phi_0$ is a
resolution of the indeterminacy of the map $\sigma$,
 \item
$\sigma_0$ is the Stein factorization of $\sigma \circ \phi_0$, so the fibers of $\sigma_0$ are connected and $Y_0$ is normal, %We have $K_X = \sigma^*H$ where $H$ is an ample $\mathbb{Q}$-divisor on $Y$. 
\iffalse
Hence, we have 
\[
K_{X_0}=\phi_0^*K_X + E_0 = \phi_0^*\sigma^*H + E_0 = \sigma_0^* H_0 + E_0
\]
where $E_0$ is a $\psi_0$-exceptional $\mathbb{Q}$-divisor on $X_0$, and $H_0=\psi_0^*(H)$ is a big and nef $\mathbb{Q}$-divisor on $Y_0$. 
\fi
\item
$\psi_1$
is a resolution of singularities of $Y_0$, so $Y_1$ is smooth,
\item
$X_1$ is a resolution of singularities of the main component of the fiber product ${X_0 \times_{Y_0} Y_1}$, and $\phi_1\colon X_1\to X_0$ and $\sigma_1\colon X_1\to Y_1$ are the induced maps.
%$\widetilde{Y_1 \times_{Y_0} X_0}$ is the normalization , 
\iffalse
We obtain 
\[
K_{X'_1} = (\phi'_1)^*K_{X_0} + F_0 = (\phi'_1)^*(\sigma_0^*H_0 + E_0) + F_0 = (\phi'_1)^*\sigma_0^* H_0 + F'_0 = (\sigma'_1)^*H_1 + F'_0
\]
where $F_0$ is a $\phi'_1$-exceptional $\mathbb{Q}$-divisor on $X'_1$, $F'_0$ is a $\mathbb{Q}$-divisor on $X'_1$, and $H_1=\psi^*H_0$ is a big and nef $\mathbb{Q}$-divisor on $Y_1$.  
\fi 
\end{itemize}
\iffalse
We obtain the following diagram:
\begin{equation*}
\begin{tikzcd}
X_1  \arrow{r}{\phi'_1} \arrow{dr}{\sigma_1}  & X'_1 \arrow{d}{\sigma'_1} \\
& Y_1  
\end{tikzcd}
\end{equation*}
\fi
Put $\phi=\phi_0\circ \phi_1$ and $\psi=\psi_0\circ\psi_1$. 
By construction $X_1$ and $Y_1$ are smooth and compact, which proves (\ref{first-mod-1}). 
Also, by construction $\phi$ is a bimeromorphic modification, and $\psi$ is generically finite. 
On the other hand, we know from Remark \ref{rem-ueno-connected} that the closure of a typical fiber of $\sigma$ is connected. Hence, $\psi_0$ is a bimeromorphic modification as well. This proves (\ref{first-mod-2}). 
By Lemma \ref{lem-existence-of-lifting}, the group $\Gamma$ admits a lifting to $\Gamma\subset \mathrm{Aut}(Y_0)$. By~\mbox{\cite[Theorem 13.2]{BM}}, a resolution of singularities $\psi_1$ can be chosen $\Gamma$-equivariant. This proves (\ref{first-mod-3}). 
Since $\phi$ is a bimeromorphic modification, it is $\mathrm{Bim}(X)$-equivariant. 
By construction, $\sigma_1$ is $\mathrm{Bim}(X)$-equivariant as well, so the diagram \eqref{diag-first-mod} is $\mathrm{Bim}(X)$-equivariant. 

Note that $mK_X\sim \sigma^*L$ where $L$ is a very ample divisor on~$Y$. 
Here by the pullback of a very ample divisor via a meromorphic map we mean the proper transform of a general divisor linearly equivalent to it.  
%Hence $L_0=\psi_0^*L$ is a big and nef divisor on $Y_0$. 
Put $H=1/mL$, so $H$ is an ample $\mathbb{Q}$-divisor on $Y$. 
We have $K_X\sim_m \sigma^*H$. 
Recall that $X$ is smooth. Thus, 
\[
K_{X_1} = \phi^*K_{X}+E_1,
\]
where $E_1$ is a $\phi$-exceptional effective  integral divisor on~$X_1$. 
Therefore
\begin{equation}
\label{eq-first-modification_}
K_{X_1} = \phi^*K_{X}+E_1 \sim_m \phi^*\sigma^*H + E_1 \sim_m \phi^*\sigma_1^*H + E_1 \sim_m \sigma_1^*H_1 + E_1
\end{equation}
where $H_1=\phi^*H$ is a big and nef $\mathbb{Q}$-divisor on $Y_1$ such that $mH_1$ is integral. 
We may assume that (possibly after a further blow-up of $X_1$) the divisor $E_1$ has simple normal crossings. Put $R_1=-E_1$ and $N_1=m$. This proves (\ref{first-mod-4}). 

Finally, the sub-pair $(X_1, R_1)$ is lc since $R_1$ has simple normal crossings and $-R_1$ is effective. This proves (\ref{first-mod-5}), and completes the proof of the proposition.   
\end{proof}

\begin{proposition}
\label{prop-elliptic-or-moishezon}
In the notation of Proposition \ref{prop-first-modification}, assume that for the Kodaira dimension of $X$ we have $\kappa(X)=\dim X-1$. 
Then either $X$ is Moishezon, or a typical fiber of $\sigma_1\colon X_1\to Y_1$ is an elliptic curve, and $R_1$ is $\sigma_1$-vertical.
\end{proposition}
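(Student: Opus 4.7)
My plan is to restrict the relation $K_{X_1}+R_1\sim_{N_1}\sigma_1^*H_1$ to a typical fiber $F$ of $\sigma_1$ and to use adjunction. Since $\kappa(X)=\dim X-1$, one has $\dim Y_1=\dim X_1-1$, so a typical fiber $F$ is a smooth compact one-dimensional complex manifold, hence a smooth projective curve. For general $F$ the divisor $\sigma_1^*H_1$ is disjoint from $F$, and because $\sigma_1$ is smooth along $F$, adjunction gives $K_{X_1}|_F=K_F$. Restricting the above relation we obtain $K_F+R_1|_F\sim_{\mathbb{Q}}0$. By Proposition~\ref{prop-first-modification}(\ref{first-mod-4}) the divisor $-R_1$ is effective, so $R_1|_F\leq 0$, and hence $K_F\sim_{\mathbb{Q}}-R_1|_F$ is effective. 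This forces the genus $g=g(F)$ to satisfy $g\geq 1$.

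If $g=1$, then $\deg K_F=0$, and so $\deg R_1|_F=0$; combined with $R_1|_F\leq 0$ this gives $R_1|_F=0$ for general $F$. Equivalently, $R_1$ has no $\sigma_1$-horizontal components, so $R_1$ is $\sigma_1$-vertical, which is the second alternative of the proposition.

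It remains to treat the case $g\geq 2$, where I would show that $X$ is Moishezon via the relative $m$-canonical map. For any $m\geq 3$, the linear system $|mK_F|$ is very ample on every smooth projective curve of genus at least two. The sheaf $\mathcal{F}_m=(\sigma_1)_*\omega_{X_1/Y_1}^{\otimes m}$ is coherent on the projective variety $Y_1$ by Grauert's theorem, and over the open locus where $\sigma_1$ is smooth the vanishing $h^1(F_y,mK_{F_y})=0$ (for $m\geq 2$) together with cohomology-and-base-change makes $\mathcal{F}_m$ locally free with fiber $H^0(F_y,mK_{F_y})$ at a general $y\in Y_1$. The relative $m$-canonical map
\[
\Phi_m\colon X_1\dashrightarrow \mathbb{P}_{Y_1}(\mathcal{F}_m)
\]
then restricts to the $m$-canonical embedding on a general fiber, and is therefore bimeromorphic onto its image. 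Since $Y_1$ is projective, $\mathbb{P}_{Y_1}(\mathcal{F}_m)$ is a projective scheme, so the closure of $\Phi_m(X_1)$ is a projective variety by Chow's theorem. Hence $X_1$ is bimeromorphic to a projective variety, and since Moishezon-ness is a bimeromorphic invariant both $X_1$ and $X$ are Moishezon.

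The main technical obstacle I anticipate is the analytic-side justification of the relative $m$-canonical construction: checking that base change genuinely applies on the smooth locus of $\sigma_1$, and that the closure of $\Phi_m(X_1)$ in the projective scheme $\mathbb{P}_{Y_1}(\mathcal{F}_m)$ is a projective variety bimeromorphic to $X_1$ despite $X_1$ itself not being assumed projective.
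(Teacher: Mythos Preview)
Your proof is correct and follows the same overall strategy as the paper: the typical fiber $F$ is a smooth curve with $g(F)\geq 1$, and $g(F)\geq 2$ forces $X$ to be Moishezon via a relative pluricanonical embedding over the projective base~$Y_1$.

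The differences are in the justifications. The paper obtains $\kappa(F)\geq 0$ by quoting Lemma~\ref{lem-fiber-non-negative-kodaira-dim}, while you deduce $g(F)\geq 1$ directly by restricting $K_{X_1}+R_1\sim_{\mathbb{Q}}\sigma_1^*H_1$ to $F$; your route is more elementary and has the bonus that in the $g=1$ case the degree count $\deg R_1|_F=0$ with $R_1|_F\leq 0$ immediately gives verticality of $R_1$. The paper instead treats ``$R_1$ not vertical'' as an independent case, noting that a horizontal component of $-R_1$ is fiberwise ample and reapplying the Moishezon criterion. For the $g\geq 2$ step the paper simply cites \cite[Lemma~19, Example~12]{Kollar22}, which packages exactly the relative pluricanonical argument you spell out by hand; the analytic technicalities you flag (Grauert coherence, base change on the smooth locus, projectivity of $\mathbb{P}_{Y_1}(\mathcal{F}_m)$) are genuine but standard, and are precisely what that reference absorbs.
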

\begin{proof}
Let $F$ be a typical fiber of $\sigma_1\colon X_1\to Y_1$, so $F$ is a smooth curve. 
Since for the Kodaira dimension of $X_1$ we have $\kappa(X_1)=\kappa(X)\geq 0$, by Lemma~\ref{lem-fiber-non-negative-kodaira-dim} we obtain $\kappa(F)\geq 0$. On the other hand, if $\kappa(F)> 0$, we show that $X_1$, and hence~$X$, is Moishezon. Indeed, assume that $\kappa(F)> 0$. 
By~\mbox{\cite[Theorem V.4.10]{BS76}}, there exists a Zariski open subset $U\subset Y_1$
such that the morphism 
$$
\sigma_1|_{V}\colon V\to U
$$ 
is smooth, where~\mbox{$V=\sigma_1^{-1}(U)$}.  
Then $K_{X_1}$ restricts to an ample line bundle on any fiber over a point in $U$. Thus, by \cite[Lemma 19, Example 12]{Kollar22} it follows that~$X_1$ is Moishezon. Hence~$X$ is Moishezon as well.  
Similarly, if $R_1$ is not $\sigma_1$-vertical,  then~\mbox{$-R_1$} has a component which dominates $Y_1$. Such a component restricts to an ample line bundle on any fiber over a point in open subset $U$ in $Y$. This implies that~$X_1$ and~$X$ are Moishezon. We conclude that if $X$ is not Moishezon, then~\mbox{$\kappa(F)=0$} (so~$F$ is an elliptic curve) and $R_1$ is $\sigma_1$-vertical. 
\end{proof}

By Proposition \ref{prop-elliptic-or-moishezon}, if $X$ is not Moishezon, then a typical fiber of $\sigma$ is an elliptic curve. In this case, we can consider the $j$-invariant map $j\colon Y\dashrightarrow \mathbb{P}^1$. 

\begin{lemma}
\label{prop-action-on-target-of-j-map}
In the notation of Proposition \ref{prop-first-modification}, assume that $\kappa(X)=\dim X-1$ and  the typical fiber of $\sigma_1\colon X_1\to Y_1$ is an elliptic curve. Then $j\colon Y_1\dashrightarrow \mathbb{P}^1$ is $\Gamma$-equivariant, 
and $\Gamma$ acts on  $\mathbb{P}^1$ trivially. 
\end{lemma}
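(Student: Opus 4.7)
The plan is to exploit the $\mathrm{Bim}(X)$-equivariance of $\sigma_1$ from Proposition~\ref{prop-first-modification}: any element of $\Gamma$ lifts to a bimeromorphic self-map of $X_1$ permuting the fibers of $\sigma_1$ via biholomorphisms of elliptic curves, which must preserve $j$-invariants. The statement ``$j$ is $\Gamma$-equivariant and $\Gamma$ acts trivially on $\mathbb{P}^1$'' is then just the assertion $j\circ\overline{\alpha}=j$ for every $\overline{\alpha}\in\Gamma$.

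Concretely, I would fix $\alpha\in\mathrm{Bim}(X)$ and its lift $\widetilde{\alpha}\in\mathrm{Bim}(X_1)$ provided by the $\mathrm{Bim}(X)$-equivariance of the bimeromorphic modification $\phi\colon X_1\to X$, with image $\overline{\alpha}\in\Gamma\subset\mathrm{Aut}(Y_1)$. Since $X_1$ is a smooth compact complex manifold, the indeterminacy loci $Z$ of $\widetilde{\alpha}$ and $Z'$ of $\widetilde{\alpha}^{-1}$ have codimension at least $2$ in $X_1$. As $\dim X_1=\dim Y_1+1$, the images $\sigma_1(Z)$ and $\sigma_1(Z')$ are proper analytic subsets of $Y_1$. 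I would then choose a nonempty Zariski open subset $U\subset Y_1$ consisting of points $y$ such that the fiber $F_y:=\sigma_1^{-1}(y)$ is a smooth elliptic curve, $F_y\cap Z=\emptyset$, and $F_{\overline{\alpha}(y)}\cap Z'=\emptyset$.

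For such a $y$, the restriction $\widetilde{\alpha}|_{F_y}$ is a well-defined holomorphic map from $F_y$ to $X_1$, and by the equivariance $\overline{\alpha}\circ\sigma_1=\sigma_1\circ\widetilde{\alpha}$ (valid outside the indeterminacy) its image lies in $F_{\overline{\alpha}(y)}$; symmetrically $\widetilde{\alpha}^{-1}|_{F_{\overline{\alpha}(y)}}$ maps to $F_y$ and provides a two-sided inverse. Hence $\widetilde{\alpha}$ restricts to a biholomorphism $F_y\xrightarrow{\sim}F_{\overline{\alpha}(y)}$ between smooth elliptic curves, forcing $j(y)=j(\overline{\alpha}(y))$ for every $y\in U\cap\overline{\alpha}^{-1}(U)$.

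Since $U\cap\overline{\alpha}^{-1}(U)$ is a nonempty Zariski open subset of $Y_1$, the two meromorphic maps $j$ and $j\circ\overline{\alpha}$ from $Y_1$ to $\mathbb{P}^1$ agree on a dense open set and therefore coincide as meromorphic maps. This is precisely the desired conclusion. I do not foresee any real obstacle here: the only mildly technical ingredient is the codimension-$2$ bound on the indeterminacy locus of a bimeromorphic self-map of a smooth compact complex manifold, which lets me argue fiberwise on a dense open subset of $Y_1$.
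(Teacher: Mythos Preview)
Your argument is correct and follows exactly the same idea as the paper's proof: a bimeromorphic self-map of $X_1$ covering $\overline{\alpha}\in\Gamma$ carries a typical fiber of $\sigma_1$ biholomorphically onto another, so the $j$-invariant is preserved and $j\circ\overline{\alpha}=j$. The paper states this in two sentences without justifying why typical fibers go to isomorphic fibers; your codimension-$2$ bound on the indeterminacy loci and the resulting fiberwise biholomorphism simply spell out that implicit step.
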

\begin{proof}
Indeed, since the morphism $\sigma_1\colon X_1\to Y_1$ is 
$\mathrm{Bim}(X)$-equivariant,  
every element of $\mathrm{Bim}(X)$ maps an elliptic curve which is a typical fiber of $\sigma_1$ to an isomorphic elliptic curve; hence these elliptic curves have the same values of the $j$-invariant. This shows that the map~\mbox{$j\colon Y_1\dashrightarrow \mathbb{P}^1$} is $\Gamma$-equivariant with respect to  the trivial action of $\Gamma$ on $\mathbb{P}^1$.
\end{proof}

\begin{proposition}
\label{prop-second-modification} 
In the notation of Proposition \ref{prop-first-modification}, assume that $\kappa(X)=\dim X-1$ and  the typical fiber of $\sigma_1\colon X_1\to Y_1$ is an elliptic curve. 
Then there exists the following $\mathrm{Bim}(X)$-equivariant commutative diagram
\begin{equation}
\label{eq-diagram-second-modification}
\begin{tikzcd}
X_2 \arrow{d}{\sigma_2} \arrow{r}{\phi_2}  & X_1 \arrow{d}{\sigma_1} \\
Y_2 \arrow{r}{\psi_2} \arrow{d}{j}  & Y_1 \arrow[dashed]{d}{j} \\
\mathbb{P}^1 \arrow[equal]{r} & \mathbb{P}^1
\end{tikzcd}
\end{equation}
such that 
\begin{enumerate}
\item 
\label{second-mod-1}
$X_2$ and $Y_2$ are compact complex manifolds (in particular, they are smooth),
\item
\label{second-mod-2}
$\phi_2$ and $\psi_2$ are bimeromorphic modifications, 
\item 
\label{second-mod-3}
$\Gamma$ acts on $Y_2$ biregularly, and 
$\psi_2$ is $\Gamma$-equivariant, % where $\Gamma_0$ is an extension of $\Gamma$ by a finite group, and $\Gamma_0$ acts on $Y_2$ biholomorphically, 
\item
\label{second-mod-4}
$\sigma_2$ is a contraction, and 
\begin{equation*}
%\label{eq-second-modification}
K_{X_2} + R_2 \sim_{N_2} \sigma_2^*H_2,
\end{equation*}
where $H_2$ is a big and nef divisor on $Y_2$, and $-R_2$ is an effective $\sigma_2$-vertical integral divisor on $X_1$,
%\item 
%\label{second-mod-5}
%the sub-pair $(X_2, R_2)$ is lc,
\item 
\label{second-mod-6}
the map $j\colon Y_2\to\mathbb{P}^1$ coincides with the $j$-invariant map on the typical fiber of $\sigma_2$,
\item
\label{second-mod-7}
the sub-pair $(Y_2, \Delta_{Y_2}+M_{Y_2})$ is lc and $\Gamma$-invariant, where 
$\Delta_{Y_2}=\Delta(\sigma_2,R_2)$ is the discriminant $\mathbb{Q}$-divisor and $M_{Y_2}$ is the moduli $\mathbb{Q}$-divisor as in \eqref{eq-defin-discriminant-and-moduli-divisor},
\item
\label{second-mod-8}
the divisor $K_{Y_2}+\Delta_{Y_2}+M_{Y_2}$ is big and nef.   
\end{enumerate} 
As a consequence, the group $\mathrm{Aut}(Y_2,\Delta_{Y_2}+M_{Y_2})$ is finite, and so its subgroup $\Gamma$ is finite as well. 
\end{proposition}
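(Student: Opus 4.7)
The plan is to perform one further round of $\Gamma$-equivariant bimeromorphic modifications that resolves the indeterminacy of the $j$-invariant map, and then to apply Propositions~\ref{prop:canonical-bundle-formula},~\ref{prop-delta-prime-is-invariant} and~\ref{prop-group-preserves-pair} in turn. To begin, I would take $Y_2$ to be a $\Gamma$-equivariant resolution of the indeterminacy locus of $j\colon Y_1\dashrightarrow\PP^1$; this is possible because by Lemma~\ref{prop-action-on-target-of-j-map} the map $j$ is $\Gamma$-equivariant with $\Gamma$ acting trivially on $\PP^1$, so an equivariant resolution as in~\cite[Theorem~13.2]{BM} applies. Then $X_2$ is constructed as a resolution of singularities of the main component of the fibre product $X_1\times_{Y_1}Y_2$; no equivariance is required at this stage, because $\mathrm{Bim}(X)$ acts on $X_1$ only bimeromorphically, so any bimeromorphic modification of $X_1$ inherits this action. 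Taking the Stein factorisation if necessary (and using that $Y_2$ is normal) ensures that $\sigma_2$ has connected fibres. This already yields~(\ref{second-mod-1})--(\ref{second-mod-3}) and~(\ref{second-mod-6}).

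Next, from $K_{X_2}=\phi_2^*K_{X_1}+E_2$, where $E_2$ is an effective $\phi_2$-exceptional integral divisor (since $X_1,X_2$ are smooth), pulling back the relation from Proposition~\ref{prop-first-modification} produces
\[
K_{X_2}+R_2\sim_{N_1}\sigma_2^*H_2,\qquad R_2:=\phi_2^*R_1-E_2,\qquad H_2:=\psi_2^*H_1.
\]
The divisor $-R_2=E_2+\phi_2^*(-R_1)$ is effective, and $R_2$ is $\sigma_2$-vertical because $\dim\phi_2(E_2)\le\dim X_1-2=\dim Y_1-1$ forces $\sigma_2(E_2)$ to be a proper subvariety of $Y_2$, while $\phi_2^*R_1$ is vertical since $R_1$ is. The divisor $H_2=\psi_2^*H_1$ is big and nef, as it is the pull-back of such via the birational morphism $\psi_2$. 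After an additional log resolution if necessary, $(X_2,R_2)$ is an lc sub-pair. Applying Proposition~\ref{prop:canonical-bundle-formula} to $\sigma_2\colon(X_2,R_2)\to Y_2$ gives $K_{X_2}+R_2\sim_\QQ\sigma_2^*(K_{Y_2}+\Delta^{(0)}+M_{Y_2})$ with $M_{Y_2}\sim_{12}\frac{1}{12}j^*Q$ for some $Q\in\PP^1$ and $(Y_2,\Delta^{(0)})$ lc. Fixing $M_{Y_2}=\frac{1}{12}j^*Q$ as a divisor, it is $\Gamma$-invariant because $\Gamma$ acts trivially on $\PP^1$.

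To enforce $\Gamma$-invariance of the discriminant, I apply Proposition~\ref{prop-delta-prime-is-invariant} to the fibration $\sigma_2\colon(X_2,R_2)\to Y_2$, replacing $R_2$ by an anti-effective $\QQ$-divisor $R_2':=R_2+\sigma_2^*D$ with $D\ge 0$ such that $\Delta_{Y_2}:=\Delta(\sigma_2,R_2')$ is invariant under $\overline{\sigma_2}(\mathrm{Bim}_{\mathrm{reg}}(X_2;\sigma_2))\supseteq\Gamma$. By the semi-additivity of the discriminant (Remark~\ref{rem-semi-additivity}) the moduli part is unchanged and $\Delta_{Y_2}=\Delta^{(0)}+D$, so that
\[
K_{Y_2}+\Delta_{Y_2}+M_{Y_2}\sim_\QQ H_2+D.
\]
This $\QQ$-divisor is big since $H_2$ is big and $D\ge 0$, and $(Y_2,\Delta_{Y_2}+M_{Y_2})$ is lc and $\Gamma$-invariant. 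Finally, Proposition~\ref{prop-group-preserves-pair} applied to this pair yields finiteness of $\mathrm{Aut}(Y_2,\Delta_{Y_2}+M_{Y_2})$, whence $\Gamma\subseteq\mathrm{Aut}(Y_2,\Delta_{Y_2}+M_{Y_2})$ is finite. The main obstacle I foresee lies in the nefness of $K_{Y_2}+\Delta_{Y_2}+M_{Y_2}$ asserted in~(\ref{second-mod-8}): bigness is automatic, but nefness requires a finer analysis of the support of the perturbation $D$ produced by Construction~\ref{construction-xi}. Since Proposition~\ref{prop-group-preserves-pair} uses only bigness, this subtlety does not affect the finiteness conclusion.
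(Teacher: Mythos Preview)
Your argument is correct and follows essentially the same route as the paper: a $\Gamma$-equivariant resolution of the $j$-map followed by a fibre-product construction, the canonical bundle formula, the perturbation of $R_2$ from Construction~\ref{construction-xi}/Proposition~\ref{prop-delta-prime-is-invariant} to force $\Gamma$-invariance of the discriminant, and then Proposition~\ref{prop-group-preserves-pair}. The paper differs only in the order of two steps (it perturbs $R_2$ \emph{before} applying Proposition~\ref{prop:canonical-bundle-formula}, which makes the lc property of $(Y_2,\Delta_{Y_2})$ immediate rather than requiring a second appeal to that proposition), and it asserts nefness of $K_{Y_2}+\Delta_{Y_2}+M_{Y_2}$ without addressing the point you raise; as you correctly observe, only bigness is needed for the finiteness conclusion.
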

\begin{proof}
Let us construct the diagram \eqref{eq-diagram-second-modification}. 
By Lemma \ref{prop-action-on-target-of-j-map}, the map $j\colon Y_1\dashrightarrow \mathbb{P}^1$ is $\Gamma$-equivariant where the action of $\Gamma$ on $\mathbb{P}^1$ is trivial. 
Let $\psi_2\colon Y_2\to Y_1$ be a resolution of the indeterminacy of $j\colon Y_1\dashrightarrow \mathbb{P}^1$.
Since $j$ is $\Gamma$-invariant, it follows that $\psi_2$ can be chosen to be $\Gamma$-equivariant. This establishes \eqref{second-mod-6}. 
By further blowing-up, we may assume that $Y_2$ is smooth. 

Let $X_2$ be the resolution of singularities of the main component of $X_1\times_{Y_1} Y_2$, and let $\sigma_2\colon X_2\to Y_2$ and $\phi_2\colon X_2\to X_1$ be the induced maps. We see that $X_2$ and~$Y_2$ are smooth compact complex manifolds, which proves \eqref{second-mod-1}. 
By construciton, $\phi_2$ and~$\psi_2$ are bimeromorphic modifications, which proves \eqref{second-mod-2}.
Note that $\sigma'_2$ is a fibration. 
Similarly to \eqref{eq-first-modification_}, we have 
\begin{equation}
\label{eq-second-modification_}
K_{X_2} + R_2 \sim_{\mathbb{Q}} \sigma_2^*H_2, 
\end{equation}
where $-R_2$ is an effective integral divisor on $X_2$, and $H_2=\psi_2^*H_1$ is a big and nef $\mathbb{Q}$-divisor on $Y_2$. 
By further blowing-up of $X_2$ we may assume that $R_2$ is a simple normal crossing $\mathbb{Q}$-divisor. 
This establishes \eqref{second-mod-4}. 

As in Construction \ref{construction-xi}, by replacing $R_2$ with $R_2+\sigma_2^*D$ for an effective $\mathbb{Q}$-divisor on $Y_2$, we may assume that $R_2$ is maximal anti-effective $\mathbb{Q}$-divisor on $X$ with respect to pull-backs from $Y$. This means that $R_2$ is anti-effective, and $R_2+\sigma_1^* D$ is not anti-effective for any effective $\mathbb{Q}$-divisor $D$ on $Y$. Note that $R_2$ is still a simple normal crossing $\mathbb{Q}$-divisor, and in particular, $(X_2, R_2)$ is an lc sub-pair.  
By Proposition \ref{prop:canonical-bundle-formula} we have  
\begin{equation}
\label{eq-second-mod-canonical-bundle-formula-1}
K_{X_2}+R_2\sim_{\mathbb{Q}} \sigma_2^*\left(K_{Y_2} +  \Delta_{Y_2} + M_{Y_2}\right),
\end{equation}
where $\Delta_{Y_2}=\Delta(Y_2, R_2;\sigma_2)$ is the discriminant $\mathbb{Q}$-divisor of $\sigma_2$, and 
the moduli $\mathbb{Q}$-divisor $M_{Y_2}$ is a nef $\mathbb{Q}$-divisor on $Y_2$ such that $M_{Y_2}\sim_{\mathbb{Q}} \frac{1}{12}j^*Q$ for a point $Q$ in $\mathbb{P}^1$. 
Moreover, the sub-pair $(Y_2,\Delta_{Y_2})$ is lc. 
We may assume that the point $Q\in \mathbb{P}^1$ is general. Hence the sub-pair $(Y_2, \Delta_{Y_2}+M_{Y_2})$ is lc. 
From \eqref{eq-second-modification_} and \eqref{eq-second-mod-canonical-bundle-formula-1} it follows that $K_{Y_2}+\Delta_{Y_2}+M_{Y_2}$ is big and nef, which establishes \eqref{second-mod-8}. 
 
Consider the group $\Gamma'=\mathrm{Aut}(Y_2,\Delta_{Y_2}+M_{Y_2})$. 
By Proposition \ref{prop-group-preserves-pair} we know that~$\Gamma'$ is finite. 
By Proposition~\ref{prop-delta-prime-is-invariant} and Lemma~\ref{prop-action-on-target-of-j-map} the $\QQ$-divisors $\Delta_{Y_2}$ and $M_{Y_2}$ are $\Gamma$-invariant, so $\Gamma\subset \Gamma'$. Hence $\Gamma$ is finite as well. 
This completes the proof of the proposition. 
\end{proof}

Finally, we are ready to prove the main result of the paper.

\begin{proof}[Proof of Theorem \ref{main-theorem}]
This is the last assertion of Proposition \ref{prop-second-modification}.
\end{proof}

\end{document}